\documentclass[table,xcdraw]{article}
\usepackage[utf8]{inputenc}
\usepackage[margin=1in]{geometry}
\usepackage{amsfonts}
\usepackage{amsmath}
\usepackage[shortlabels]{enumitem}
\usepackage{amsthm}
\usepackage{amssymb}
\usepackage[style=alphabetic]{biblatex}
\renewbibmacro{in:}{}
\usepackage{xr}
\usepackage{fancyhdr}
\usepackage{subcaption}
\usepackage{amsmath}
\usepackage{blkarray}
\usepackage{bm}
\setlength{\headheight}{23pt}
\usepackage{hyperref}
\hypersetup{
    colorlinks=true,
    linkcolor=blue,
    filecolor=magenta,      
    urlcolor=cyan,
    citecolor=blue
}
\bibliography{sources.bib}

\usepackage{url}
\usepackage{float}
\usepackage{graphicx}
\usepackage{circuitikz}
\usepackage{quiver}
\usepackage{mathrsfs}
\usepackage{abstract}
\setcounter{tocdepth}{2}
\usepackage{xcolor}

\title{The Green correspondence for ${SL_2(\mathbb{F}_p)}$}
\author{Denver-James Marchment}
\date{}

\newtheorem{thm}{Theorem}[section]

\newtheorem*{thm*}{Theorem}
\newtheorem{cor}[thm]{Corollary}

\newtheorem{lem}[thm]{Lemma}
\newtheorem{prop}[thm]{Proposition}

\theoremstyle{definition}
\newtheorem{defn}[thm]{Definition}

\newtheorem{rem}[thm]{Remark}

\newcommand{\angel}[1]{\langle #1 \rangle}

\newtheorem*{rem*}{Remark}

\newcommand{\thh}{\text{th}}

\newcommand{\Mod}[1]{\ \mathrm{mod}\ (#1)}
\newcommand{\Modwb}[1]{\ \mathrm{mod}\ #1}

\newcommand{\Res}{\textrm{\normalfont Res}}
\newcommand{\Ind}{\textrm{\normalfont Ind}}

\newcommand{\soc}{\textrm{{\normalfont Soc}}}
\newcommand{\rad}{\textrm{{\normalfont Rad}}}
\newcommand{\Top}{\textrm{{\normalfont Top}}}

\DeclareMathOperator{\Span}{\normalfont Span}
\newcommand{\littleb}{\textrm{{\ss}}}
\usepackage{mathtools}
\DeclarePairedDelimiter\ceil{\lceil}{\rceil}
\DeclarePairedDelimiter\floor{\lfloor}{\rfloor}
\setcounter{section}{-1}
\usepackage{circuitikz}
\usepackage{float}
\usepackage{enumitem}
\usepackage{underoverlap}\usepackage[justification=centering]{caption}
\setlength{\parindent}{0cm}

\begin{document}

\maketitle

\begin{abstract}
    Let ${p > 2}$ be an odd prime and ${G = SL_2(\mathbb{F}_p)}$. Denote the subgroup of upper triangular matrices as $B$. Finally, let ${\mathbb{F}}$ be an algebraically closed field of characteristic ${p}$. The Green correspondence gives a bijection between the non-projective indecomposable ${\mathbb{F}[G]}$ modules and non-projective indecomposable ${\mathbb{F}[B]}$ modules, realised by restriction and induction. In this paper, we start by recalling a suitable description of the non-projective indecomposable modules for these group algebras. Next, we explicitly describe the Green correspondence bijection by pinpointing the modules' position on the Stable Auslanden-Reiten quivers. Finally, we obtain two corollaries in terms of these descriptions: formulae for lifting the ${\mathbb{F}[B]}$ module decomposition of an ${\mathbb{F}[G]}$ module, and a complete description of ${\Ind_B^G}$ and ${\Res^G_B}$.
\end{abstract}

\section{Introduction}
The special linear group is a classical group that is used within many areas of mathematics. One such area is modular representation theory, where one considers the representations of a finite group whose order is divisible by the characteristic of the base field. In particular, the representation theory of the group ${G = SL_2(\mathbb{F}_p)}$ ($p$ an odd prime) over a field ${\mathbb{F} = \overline{\mathbb{F}}}$ of characteristic $p$ is commonly studied in the literature (see for example \cite[\S \RN{1}, \RN{3}, \RN{5}]{localrep}, \cite[\S 6, 11]{webb}, \cite[\S \RN{4}, \RN{5}]{schneider}, \cite{bonnafe}). In this example, one nice property of $G$ is that its non-projective indecomposable representations are in one-to-one correspondence with those of the 
subgroup $B$ of upper-triangular matrices of $G$. In particular, let ${M}$ be a non-projective indecomposable ${\mathbb{F}[G]}$ module. Then, ${\Res^G_B(M) \cong N\oplus Q}$, where $N$ is non-projective indecomposable and ${Q}$ is projective. Furthermore, ${\Ind^G_B(N)\cong M\oplus P}$, where $P$ is projective. This is a special case of the \textit{Green correspondence} (see \cite[\S \RN{3}.10, Thm. 1]{localrep}). While the group $G$ is used as a canonical example for the Green correspondence in the sources mentioned, this work provides the explicit computations that describe the bijection, offering a level of detail not yet fully explored in previous studies. \par
We get two main Theorems as a result of our considerations. The first concerns taking an ${\mathbb{F}[G]}$ module for which (1) the decomposition of ${\Res^G_B(M)}$ as a direct sum of indecomposable ${\mathbb{F}[B]}$ modules is known and (2) the composition factors of $M$ are known. We are then able to give the full decomposition of $M$ as a direct sum of indecomposable ${\mathbb{F}[G]}$ modules (Theorem \ref{lifting decomposition}). The second result is the full description of induction and restriction of ${\mathbb{F}[B]}$ and ${\mathbb{F}[G]}$ modules (Corollary \ref{ind^G_B(U_{a,b})}, Lemma \ref{Res^G_B(P_{V_t})}, Theorem \ref{restriction of non-projective indecomposable F[G] module}). The author intends to use the results of this paper to extend those of \cite{lucas}, in which the equivariant decomposition of the canonical representation of the Drinfeld curve is given. Namely, the equivariant decomposition of the space of globally holomorphic polydifferentials of the Drinfeld curve will be computed. \par
Our investigation begins by recalling the indecomposable ${\mathbb{F}[B]}$ modules (Proposition \ref{parameterisation of F[B] modules}), as well as the block decomposition of the group algebras ${\mathbb{F}[B]}$ and ${\mathbb{F}[G]}$ (Propositions \ref{block structure for F[B]}, \ref{block structure for F[G]}). Next, we use results from \cite{janusz} to give a description of the non-projective indecomposable ${\mathbb{F}[G]}$ modules based on Brauer tree walks (Definition \ref{parameterisation of the non-projective indecomposable F[G] modules}). From here, we note that the Green correspondence provides an isomorphism between the \textit{Stable Auslanden-Reiten quivers} of the blocks of ${\mathbb{F}[B]}$ and ${\mathbb{F}[G]}$. It is through an exploration of the structure of the Stable A.R. quivers of these blocks that we explicitly describe the Green correspondence. The two Stable A.R. quivers of the blocks of ${\mathbb{F}[B]}$ have a very simple description (Corollary \ref{stable a.r. quivers for F[B]}); however, this is not the case for the two Stable A.R. quivers of the blocks of ${\mathbb{F}[G]}$. We use results from \cite{bleher-chinburg} to narrow down the position of the non-projective indecomposable ${\mathbb{F}[G]}$ modules in the corresponding quiver (Corollaries \ref{small correspondence}, \ref{boundaries of F[G] modules}, \ref{small correspondence extended}). Finally, we utilize this information together with dimension matching (Proposition \ref{dimension of F[G] module modulo p}) to describe the Green correspondent of a non-projective indecomposable ${\mathbb{F}[B]}$ module as a walk on one of the Brauer trees for ${\mathbb{F}[G]}$ (Theorem \ref{green correspondence G}).

\section*{Acknowledgements}
I would like to express sincere gratitude to Bernhard K\"{o}ck, for his many hours of unwavering guidance and support. I also want to thank Frauke M. Bleher, who pointed me to her work \cite{bleher-chinburg} with Ted Chinburg. Without the contributions made by either of these people, this work would not have been possible.

\section{Notation}
We give a list here of notations used throughout this paper.
\begin{figure}[H]
        \begin{table}[H]
\centering
\begin{tabular}{|c|c|c|}
\hline
Notation                        & Explanation                                                                                                                                                        & Reference      \\ \hline
$p$                             & An odd prime.                                                                                                                                                      & N/A.           \\ \hline
${\mathbb{F}}$                  & \begin{tabular}[c]{@{}c@{}}An algebraically closed field of characteristic $p$.\end{tabular}                                                                     & N/A.           \\ \hline
${\zeta}$                       & A primitive root modulo $p$.                                                                                                                                       & N/A.           \\ \hline
${G}$                           & The group ${SL_2(\mathbb{F}_p)}$.                                                                                                                                  & N/A.           \\ \hline
${U = \langle g \rangle}$       & \begin{tabular}[c]{@{}c@{}}The subgroup of upper uni-triangular matrices of $G$.\end{tabular}                                                                    & N/A.           \\ \hline
${T = \langle \lambda \rangle}$ & \begin{tabular}[c]{@{}c@{}}The subgroup of diagonal matrices of $G$.\end{tabular}                                                                                & N/A.           \\ \hline
${B = U\rtimes_{\chi} T}$       & \begin{tabular}[c]{@{}c@{}}The subgroup of upper triangular matrices of ${G}$.\end{tabular}                                                                      & N/A.           \\ \hline
${U_{a,b}}$                     & The indecomposable ${\mathbb{F}[B]}$ modules.                                                                                                                      & \ref{parameterisation of F[B] modules}. \\ \hline
${S_a = U_{a,1}}$               & The simple ${\mathbb{F}[B]}$ modules.                                                                                                                              &\ref{parameterisation of F[B] modules}. \\ \hline
${\littleb_0,\littleb_1}$       & The two blocks of ${\mathbb{F}[B]}$.                                                                                                                               &\ref{block structure for F[B]}.   \\ \hline
${V_t}$                         & The simple ${\mathbb{F}[G]}$ modules.                                                                                                                              &\ref{simple F[G] modules}. \\ \hline
${\mathcal{B}_0,\mathcal{B}_1}$ & \begin{tabular}[c]{@{}c@{}}The two non semi-simple blocks of ${\mathbb{F}[G]}$.\end{tabular}                                                                     & \ref{block structure for F[G]}.   \\ \hline
${M(i,l,s,\epsilon)}$           & \begin{tabular}[c]{@{}c@{}}The indecomposable ${\mathbb{F}[G]}$ module associated with a given walk on the Brauer tree.\end{tabular}                                   &\ref{parameterisation of the non-projective indecomposable F[G] modules}.  \\ \hline
${\Gamma^s_{\Lambda}}$          & The Stable A.R. quiver of ${\Lambda}$.                                                                                                                             & N/A.           \\ \hline
${V_{a,b}}$                     & The Green correspondent of ${U_{a,b}}$.                                                                                                                            & N/A.           \\ \hline
${\Omega^2(-)}$                     & The Heller operator applied twice.                                                                                                                            & N/A.           \\ \hline
${\mathbf{1}_A(x)}$                   & \begin{tabular}[c]{@{}c@{}}The indicator function (returns $1$ if ${x \in A}$, and $0$ otherwise).\end{tabular}                                                      &\S 3.   \\ \hline
${c_{a,b,t}}$                   & \begin{tabular}[c]{@{}c@{}}The multiplicity of ${V_t}$ as a composition factor of ${V_{a,b}}$.\end{tabular}                                                      &\ref{c_{a,b}(t) explicit description and c_{a,b,t}}.   \\ \hline
${\Gamma,\ \mathscr{B}}$        & \begin{tabular}[c]{@{}c@{}}The Cartan matrix ${\Gamma}$ of ${\mathcal{B}_i}$ and its inverse ${\mathscr{B} = \Gamma^{-1}}$.\end{tabular}                                                    &\ref{inverse of the cartan matrix}.       \\ \hline
${\alpha_t}$                    & \begin{tabular}[c]{@{}c@{}}The multiplicity of ${V_t}$ as a composition factor of some projective ${\mathbb{F}[G]}$ module.\end{tabular} & \S 4.           \\ \hline
${n_i}$                         & \begin{tabular}[c]{@{}c@{}}The multiplicity of ${P_{V_{t}}}$ (the projective cover of ${V_t}$) as a summand of some \\${\mathbb{F}[G]}$ module.\end{tabular}       & N/A.           \\ \hline
${\ell_t}$                      & \begin{tabular}[c]{@{}c@{}}The multiplicity of ${V_t}$ as a composition factor of some ${\mathbb{F}[G]}$ module.\end{tabular}                                & \S 4.           \\ \hline
${n_{a,b}}$                     & \begin{tabular}[c]{@{}c@{}}The multiplicity of ${U_{a,b}}$ as a summand of some ${\mathbb{F}[B]}$ module.\end{tabular}                                           & N/A.           \\ \hline
${\theta_{a,b,c}}$              & The multiplicity of ${S_c}$ as a composition factor of ${U_{a,b}}$.                                                                                                & \S 4.           \\ \hline
${\ell_{a,b,t}}$                & \begin{tabular}[c]{@{}c@{}}The multiplicity of ${V_t}$ as a composition factor of ${\Ind^G_B(U_{a,b})}$.\end{tabular}                                            & \S 4.           \\ \hline
${\kappa_c}$                    & \begin{tabular}[c]{@{}c@{}}The multiplicity of ${S_c}$ as a composition factor of some projective ${\mathbb{F}[B]}$ module.\end{tabular} & \S 4.           \\ \hline
${\mathbf{\gamma}}$,\ ${\mathbf{\delta}}$            & The Cartan matrix ${\mathbf{\gamma}}$ of ${\littleb_i}$ and its inverse ${\mathbf{\delta} = \mathbf{\gamma}^{-1}}$.                                                                                                                               & \S 4.           \\ \hline
${\Theta_{i,l,s,t}}$            & The multiplicity of ${V_t}$ as a composition factor of ${M(i,l,s,\epsilon)}$.                                                                                      & \S 4.           \\ \hline
\end{tabular}
\end{table}
\end{figure}
In addition, whenever we refer to the Grothendieck group ${K_0(\lambda)}$ of a finite-dimensional $k$-algebra ${\lambda}$, we mean:
$$
{
    K_0(\lambda) = \mathbb{Z}\left\{[X]: X \in \lambda-\text{mod}\right\} / \{[A] - [B] + [C]:\ \exists\text{ s.e.s }0\rightarrow A\rightarrow B\rightarrow C\rightarrow 0\}
}
$$
where ${[X]}$ denotes the isomorphism class of a ${\lambda}$-module $X$.

\section{The indecomposables and block structure}
This section primarily gathers established results from the literature and applies them to the specific context of our groups. We begin by recalling a parameterisation of the indecomposable ${\mathbb{F}[B]}$ modules.
\begin{prop}
    \label{parameterisation of F[B] modules}
    For ${a \in \mathbb{Z}}$, let ${S_a}$ denote the one dimensional vector space over ${\mathbb{F}}$ on which $g$ acts trivially and ${\lambda}$ acts as multiplication by ${\zeta^a}$. Then ${S_0, S_1, ..., S_{p-2}}$ gives all the simple modules for ${\mathbb{F}[B]}$. Next, let ${1\leq b \leq p}$. Then there exists a uniserial module ${U_{a,b}}$ of dimension $b$ whose composition factors are given by ${S_a, S_{a+2}, S_{a+4}, ...}$ in ascending order. This describes all indecomposable ${\mathbb{F}[B]}$ modules. The projective indecomposables are given by ${U_{a,p}}$.
\end{prop}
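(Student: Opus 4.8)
The plan is to prove Proposition \ref{parameterisation of F[B] modules} by exploiting the structure of $B = U \rtimes_\chi T$ as a semidirect product where $U \cong \mathbb{Z}/p\mathbb{Z}$ is a normal $p$-subgroup and $T \cong \mathbb{Z}/(p-1)\mathbb{Z}$ is a $p'$-group. First I would observe that since $\mathrm{char}(\mathbb{F}) = p$ and $|T| = p-1$ is coprime to $p$, the group algebra $\mathbb{F}[T]$ is semisimple with $p-1$ one-dimensional simple modules, indexed by the characters $\lambda \mapsto \zeta^a$ for $a \in \mathbb{Z}/(p-1)\mathbb{Z}$; call these $S_a$ as in the statement, pulled back to $B$ via $B \twoheadrightarrow T$ (so $g$ acts trivially). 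To see these are all the simple $\mathbb{F}[B]$-modules, note that $U$ acts trivially on any simple $\mathbb{F}[B]$-module: the fixed space $M^U$ is nonzero (as $U$ is a $p$-group acting on a nonzero $\mathbb{F}$-vector space) and is $B$-stable since $U \trianglelefteq B$, hence equals $M$ by simplicity; thus simple $\mathbb{F}[B]$-modules are exactly simple $\mathbb{F}[B/U] = \mathbb{F}[T]$-modules.

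Next I would construct the uniserial modules $U_{a,b}$. The cleanest route is to take $\mathbb{F}[U] \cong \mathbb{F}[x]/(x^p)$ where $x = g - 1$, and note that $B$ acts on $\mathbb{F}[U]$ via conjugation on $U$ combined with the regular action; more concretely, $\mathbb{F}[U]$ as an $\mathbb{F}[B]$-module has a filtration by the powers of the radical $(x)$, and $T$ acts on the one-dimensional quotients $x^j \mathbb{F}[U] / x^{j+1}\mathbb{F}[U]$ by a character determined by the conjugation action $\lambda g \lambda^{-1} = g^{\zeta^2}$ (using that $\lambda$ scales the upper-triangular entry by $\zeta^{\pm 2}$ in $SL_2$). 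One then defines $U_{a,b}$ to be the appropriate twist of the quotient $\mathbb{F}[U]/(x^b)$ (or a submodule of a projective) so that the composition factors read $S_a, S_{a+2}, \dots$ from top to bottom; uniseriality is immediate since submodules of $\mathbb{F}[x]/(x^p)$ are linearly ordered by the powers of $x$, and this ordering is $B$-stable. The projective indecomposables are the $U_{a,p}$: indeed $\mathbb{F}[B]$ is free of rank $p-1$ over $\mathbb{F}[U] \cong \mathbb{F}[x]/(x^p)$, so $\mathbb{F}[B] = \bigoplus_a P_a$ where each $P_a$ is uniserial of length $p$ — by the character computation, $P_a$ must be $U_{a,p}$ since its head is $S_a$.

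Then I would prove these exhaust the indecomposables. Since $\mathbb{F}[B]$ is a Nakayama (serial) algebra — each indecomposable projective is uniserial — every indecomposable $\mathbb{F}[B]$-module is a quotient of an indecomposable projective, hence uniserial, hence of the form $U_{a,b}$ for some $a$ and some $1 \le b \le p$. To establish the Nakayama property intrinsically without circularity, I would argue that $\mathbb{F}[B]$, being a twisted group algebra situation / a skew group algebra $\mathbb{F}[U] * T$ with $\mathbb{F}[U]$ a truncated polynomial algebra and $T$ acting by a single character on $x$, has every projective indecomposable uniserial of length $p$; this follows from the explicit decomposition of $\mathbb{F}[B]$ above. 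Finally I would double-check the claimed composition series: the step from $S_{a+2j}$ to $S_{a+2(j+1)}$ is forced by the fact that $\lambda$ conjugates $g$ to $g^{\zeta^{-2}}$ (or $g^{\zeta^2}$, depending on the normalization of $\chi$), so that $x = g-1$ transforms under $\lambda$ by the character $\zeta^{\pm 2}$, shifting the $T$-weight by $2$ at each radical layer.

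The main obstacle, and the only place where genuine care is needed, is pinning down the precise character by which $T$ acts on the radical layers — i.e. whether the composition factors step by $+2$ or $-2$, and fixing conventions for $\chi$, $\zeta$, and the direction (ascending vs. descending) of the composition series — so that the statement "${S_a, S_{a+2}, S_{a+4}, \ldots}$ in ascending order" is literally correct. Everything else (semisimplicity of $\mathbb{F}[T]$, triviality of the $U$-action on simples, uniseriality of truncated polynomial modules, the Nakayama classification) is standard. I would therefore devote most of the written proof to the conjugation computation $\lambda g \lambda^{-1} = g^{\zeta^{\pm 2}}$ and its consequence for the $T$-weights on $\mathbb{F}[U]/(x^b)$, and treat the rest by citing the structure theory of serial algebras.
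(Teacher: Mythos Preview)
Your proposal is correct and follows essentially the same mathematical route as the paper: the paper simply cites Alperin's \emph{Local Representation Theory} (\S\RN{2}.5--\RN{2}.6) for the projective indecomposables, their composition factors, and the fact that $\mathbb{F}[B]$ is a Nakayama algebra so that every indecomposable is a uniserial quotient of a projective; your plan spells out exactly this argument (trivial $U$-action on simples, $\mathbb{F}[U]\cong\mathbb{F}[x]/(x^p)$, the conjugation computation $\lambda g\lambda^{-1}=g^{\zeta^{\pm 2}}$ giving the weight shift by $2$ on radical layers, and the serial classification). The only care needed, as you correctly flag, is fixing the sign convention so that the composition factors read $S_a,S_{a+2},\dots$ in the stated ascending order.
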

\begin{proof}
    The projective indecomposable ${\mathbb{F}[B]}$ modules and their composition factors follow from \cite[\S \RN{2}.5, pp. 35-37]{localrep}, \cite[\S \RN{2}.5, pp. 37, ex. 3]{localrep}. It follows from \cite[\S \RN{2}.6, pp. 42-43]{localrep} that all indecomposable ${\mathbb{F}[B]}$ modules are uniserial and obtained as homomorphic images of projective indecomposables, giving the full description.
\end{proof}
\begin{prop}
    \label{block structure for F[B]}
    The group algebra ${\mathbb{F}[B]}$ consists of two blocks ${\mathbb{F}[B] \cong \littleb_0 \times \littleb_1}$. They both have cyclic defect, and have the following Brauer trees:
    \begin{figure}[H] 
        \centering
        \begin{circuitikz}
\tikzstyle{every node}=[font=\LARGE]
\draw [ fill={rgb,255:red,0; green,0; blue,0} ] (11,11.75) circle (0.25cm);
\node [font=\large] at (12,12.3) {$m=2$};
\draw [short] (11.25,11.75) -- (13,11.75);
\draw  (13.25,11.75) circle (0.25cm);

\draw [short] (11.18,11.57) -- (12.42,10.33);
\draw  (12.6,10.15) circle (0.25cm);
\draw [short] (10.82,11.57) -- (9.58,10.33);
\draw  (9.4,10.15) circle (0.25cm);
\draw [short] (10.82,11.93) -- (9.58,13.17);
\draw  (9.4,13.35) circle (0.25cm);
\draw[short, dotted, ultra thick, purple] (10.646,12.104) .. controls (11,12.25) and (11.354,12.104) .. (11.5,11.75);
\node [font=\LARGE] at (12,11.4) {$S_0$};
\node [font=\LARGE] at (11.46,10.80) {$S_2$};
\node [font=\LARGE] at (10.65,10.75) {$S_4$};
\node [font=\LARGE] at (10.05,11.30) {$S_6$};
\node [font=\LARGE] at (10,12.1) {$S_8$};
\node [font=\LARGE] at (10.54,12.70) {$S_{10}$};
\node [font=\LARGE] at (11,11.75) {\textcolor{yellow}{$e$}};

\draw [short] (11,11.5) -- (11,9.75);
\draw  (11,9.5) circle (0.25cm);
\draw [short] (10.75,11.75) -- (9,11.75);
\draw  (8.75,11.75) circle (0.25cm);
\end{circuitikz}
        \begin{circuitikz}
\tikzstyle{every node}=[font=\LARGE]
\draw [ fill={rgb,255:red,0; green,0; blue,0} ] (11,11.75) circle (0.25cm);
\node [font=\large] at (12,12.3) {$m=2$};
\draw [short] (11.25,11.75) -- (13,11.75);
\draw  (13.25,11.75) circle (0.25cm);

\draw [short] (11.18,11.57) -- (12.42,10.33);
\draw  (12.6,10.15) circle (0.25cm);
\draw [short] (10.82,11.57) -- (9.58,10.33);
\draw  (9.4,10.15) circle (0.25cm);
\draw [short] (10.82,11.93) -- (9.58,13.17);
\draw  (9.4,13.35) circle (0.25cm);
\draw[short, dotted, ultra thick, purple] (10.646,12.104) .. controls (11,12.25) and (11.354,12.104) .. (11.5,11.75);
\node [font=\LARGE] at (12,11.4) {$S_1$};
\node [font=\LARGE] at (11.46,10.80) {$S_3$};
\node [font=\LARGE] at (10.65,10.75) {$S_5$};
\node [font=\LARGE] at (10.05,11.30) {$S_7$};
\node [font=\LARGE] at (10,12.1) {$S_9$};
\node [font=\LARGE] at (10.54,12.70) {$S_{11}$};
\node [font=\LARGE] at (11,11.75) {\textcolor{yellow}{$e$}};

\draw [short] (11,11.5) -- (11,9.75);
\draw  (11,9.5) circle (0.25cm);
\draw [short] (10.75,11.75) -- (9,11.75);
\draw  (8.75,11.75) circle (0.25cm);
\end{circuitikz}
        \caption{{The Brauer trees ${\littleb_0}$ (left), ${\littleb_1}$ (right) of $B$.\\Their exceptional vertices are labeled $e$, and they\\have multiplicity $2$.}}
        \label{fig:blocks-of-B}
    \end{figure}
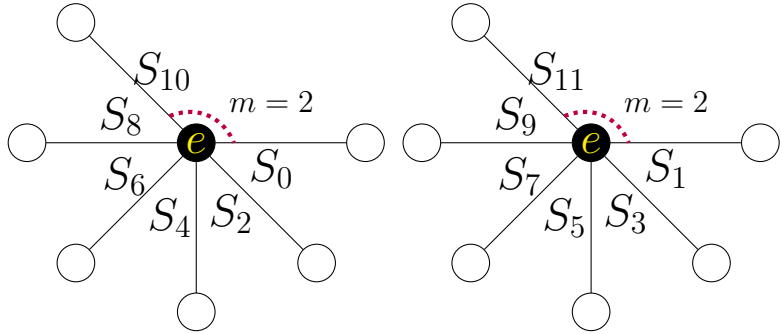
\end{prop}
\begin{proof}
    These are given in \cite[\S 10.3.2]{bonnafe}.
\end{proof}
We now want to give a description of the indecomposable modules for $G$; however, this is not as easy as the case for $B$. We describe all the indecomposables in stages.
\begin{prop}
    \label{simple F[G] modules}
    Let ${1\leq t\leq p}$. Define the vector subspace ${V_t\leq \mathbb{F}[x,y]}$ as
    $$
    {
        V_t = \angel{x^{t-1},\ x^{t-2}y,\ ...,\ xy^{t-2},\ y^t}_{\mathbb{F}}.
    }
    $$
    Then under the action
    $$
    {
        x^iy^j \cdot
        \left(
            \begin{array}{ll}
                \alpha&\beta\\
                \gamma&\delta
            \end{array}
        \right)
        :=
        (\alpha x + \beta y)^i(\gamma x + \delta y)^j,
    }
    $$
    the modules ${V_1,\ V_2,\ ...,\ V_p}$ give a complete list of the simple ${\mathbb{F}[G]}$ modules up to isomorphism.
\end{prop}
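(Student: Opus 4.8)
The plan is to verify three things: (1) each $V_t$ is a well-defined $\mathbb{F}[G]$-module under the stated action, (2) each $V_t$ is simple, and (3) the list $V_1,\dots,V_p$ is complete. For (1), I would check that the assignment $x^iy^j \mapsto (\alpha x + \beta y)^i(\gamma x+\delta y)^j$, extended linearly, gives a right action: associativity amounts to the fact that substituting linear forms into a polynomial is compatible with matrix multiplication (the action is the restriction to $V_t$ of the natural action of $\GL_2$ on degree-$(t-1)$ homogeneous polynomials in two variables, which preserves $V_t$ since it is exactly the homogeneous component of degree $t-1$, spanned by $x^{t-1}, x^{t-2}y, \dots, y^{t-1}$). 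Note $\dim_{\mathbb{F}} V_t = t$, so the dimensions range over $1,2,\dots,p$.

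For (2), simplicity, the standard argument uses the action of the subgroups $U$ and $T$. Take a nonzero submodule $W \leq V_t$ and a nonzero $w \in W$. Acting by $T = \angel{\lambda}$, where $\lambda$ is diagonal, the monomials $x^iy^{t-1-i}$ are eigenvectors with distinct eigenvalues (since $p$ is odd and $t \leq p$, the characters $i \mapsto \zeta^{\,\text{(something)}\cdot i}$ separate the $t \leq p$ monomials); a Vandermonde/eigenspace argument then shows $W$ contains some monomial $x^iy^{t-1-i}$. Applying the unipotent generator $g = \left(\begin{smallmatrix}1&1\\0&1\end{smallmatrix}\right)$ sends $x^iy^{t-1-i} \mapsto x^i(x+y)^{t-1-i} = \sum_k \binom{t-1-i}{k} x^{i+k} y^{t-1-i-k}$; since $t - 1 - i \leq p-1$, the relevant binomial coefficients are nonzero mod $p$, so repeatedly applying $g$ and the lower-triangular unipotent (or using $T$ to isolate components) one climbs to $x^{t-1}$ and then descends to every monomial. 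Hence $W = V_t$.

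For (3), completeness, I would invoke the general fact that the number of isomorphism classes of simple $\mathbb{F}[G]$-modules in characteristic $p$ equals the number of $p$-regular conjugacy classes of $G$. For $G = SL_2(\mathbb{F}_p)$ this count is $p$ (the $p$-regular classes: $\pm I$, the two classes of $\pm$ unipotent type are $p$-singular and excluded, the $(p-3)/2$ split semisimple classes, and the $(p-1)/2$ non-split semisimple classes, giving $2 + (p-3)/2 + (p-1)/2 = p$). Since we have exhibited $p$ pairwise non-isomorphic simple modules — non-isomorphic because they have distinct dimensions $1,2,\dots,p$ — they must be all of them. Alternatively, and perhaps more self-containedly, one can cite the classical classification of simple $\mathbb{F}[SL_2(\mathbb{F}_p)]$-modules as the restrictions of the Weyl modules $\mathrm{Sym}^{t-1}(\mathbb{F}^2)$ for $t-1 = 0,\dots,p-1$ (see \cite[\S \RN{1}]{localrep} or \cite{bonnafe}), which is exactly the construction above.

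The main obstacle is the simplicity argument in step (2): one must be careful that the eigenvalue-separation for $T$ genuinely works for all $t$ up to $p$ (the boundary case $t = p$ is where $\mathrm{Sym}^{p-1}$ remains irreducible, unlike $\mathrm{Sym}^{p}$), and that the binomial coefficients $\binom{t-1-i}{k}$ encountered when applying unipotents do not vanish mod $p$ — this is guaranteed precisely because all the exponents involved are $\leq p-1$, so Lucas' theorem gives nonvanishing. Everything else is routine verification or a citation.
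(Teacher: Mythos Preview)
Your proposal is correct and considerably more detailed than the paper's own proof, which consists solely of the citation ``See \cite[\S 10.1.2]{bonnafe} or \cite[pp.\ 14--16]{localrep}'' --- precisely the alternative you offer at the end. The self-contained argument you sketch (module structure, simplicity via the torus and unipotent actions, completeness by counting $p$-regular classes) is the standard one found in those sources, so in substance you and the paper agree; you have simply unpacked the reference.

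One refinement on step (2), which you rightly flag as delicate: the $T$-eigenvalues on the monomial basis are \emph{not} all distinct once $t \ge (p+1)/2$, since $x^i y^{t-1-i}\cdot\lambda = \zeta^{2i-(t-1)}x^i y^{t-1-i}$ and the exponents $2i$ repeat modulo $p-1$ as soon as $t-1 \ge (p-1)/2$. The clean fix is already implicit in your unipotent argument. With the paper's action convention one has $x^i y^{t-1-i}\cdot g = (x+y)^i y^{t-1-i}$ (note: the shift is in the $x$-factor, not the $y$-factor as you wrote), and the matrix of $g$ in the monomial basis is a single Jordan block because the relevant off-diagonal entries are $\binom{i}{1} = i$, nonzero modulo $p$ for $1\le i\le t-1\le p-1$. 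Hence $V_t\lvert_U$ has one-dimensional socle $\langle y^{t-1}\rangle$ and cyclic vector $x^{t-1}$. Any nonzero $G$-submodule therefore contains $y^{t-1}$, hence (applying the Weyl element) also $x^{t-1}$, hence all of $V_t$. No appeal to $T$-eigenvalue separation is needed.
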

\begin{proof}
    See \cite[\S 10.1.2]{bonnafe} or \cite[pp. 14-16]{localrep}.
\end{proof}
We now give a description of the projective indecomposable modules.
\begin{prop}
    \label{projective indecomposable F[G] modules}
    Let ${1\leq i\leq p}$, and let ${P_{V_i}}$ denote the projective cover of the simple module ${V_i}$.
    \begin{itemize}
        \item ${\underline{i=1}}$. ${P_{V_1}}$ is a uniserial module of dimension $p$, with composition factors ${V_1, V_{p-2}, V_1}$ in ascending order.
        \item ${\underline{1 < i < p}}$. ${P_{V_i}}$ is of dimension ${2p}$ and has the three socle layers ${V_i, V_{p+1-i} \oplus V_{p-1-i}, V_i}$.
        \item ${\underline{i=p}}$. ${P_{V_p} \cong V_p}$ is both simple and projective.
    \end{itemize}
\end{prop}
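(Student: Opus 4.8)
The plan is to separate off the Steinberg module and reduce the remaining cases to the theory of blocks with cyclic defect group. For $i=p$ one argues directly: $\dim_{\mathbb F}V_p=p=|U|$, so it suffices to show that $\Res^G_U V_p$ is free over $\mathbb F[U]\cong\mathbb F[t]/(t^p)$. Taking the basis $w_k:=x^{p-1-k}y^k$ of $V_p$, the generator $g$ of $U$ acts (under the formula of Proposition \ref{simple F[G] modules}) by a lower unitriangular matrix of binomial coefficients, so $g-1$ strictly raises the index $k$; iterating and invoking Wilson's theorem gives $(g-1)^{p-1}w_0=(p-1)!\,w_{p-1}\equiv -w_{p-1}\neq 0$. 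Hence $V_p$ is free of rank one over $\mathbb F[U]$, so it is projective, and being simple it is its own projective cover: $P_{V_p}\cong V_p$.

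For $1\le i\le p-1$ the decisive point is that a Sylow $p$-subgroup of $G$ is $U\cong\mathbb Z/p$, which is cyclic; hence every block of $\mathbb F[G]$ has cyclic defect group and is a Brauer tree algebra, so the dimension and the socle series of each projective indecomposable are encoded in the Brauer tree of its block. The simples $V_1,\dots,V_{p-1}$ lie in exactly two non-semisimple blocks, $\mathcal B_0$ containing $V_1,V_3,\dots,V_{p-2}$ and $\mathcal B_1$ containing $V_2,V_4,\dots,V_{p-1}$, distinguished by the scalar by which the central element $-I$ acts ($+1$ on $\mathcal B_0$, $-1$ on $\mathcal B_1$); $V_p$ sits alone in a block of defect zero by the previous paragraph. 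I would then import from the literature (for example \cite{bonnafe}, or \cite{localrep}, or reconstruct it from the ordinary character table together with the combinatorics of cyclic blocks) the shape of the two trees: each is a straight line with $(p-1)/2$ edges carrying a single exceptional vertex, of multiplicity $2$, at one end, the edges labelled so that within its own block the edge $V_i$ is adjacent (one at each of its two vertices) to the edges $V_{p+1-i}$ and $V_{p-1-i}$ — with the convention $V_0:=0$ at the ends of the line, and with $V_1$ occupying the non-exceptional end of $\mathcal B_0$.

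Given this, the standard structure theorem for projective indecomposables over a Brauer tree algebra completes the argument: $P_{V_i}$ has top and socle both isomorphic to $V_i$, and its heart (radical modulo socle) is the direct sum of the two uniserial modules obtained by walking around the two endpoints of the edge $V_i$. For $i=1$ one endpoint has degree one and multiplicity one and so contributes nothing, while the other contributes the single factor $V_{p-2}$; thus $P_{V_1}$ is uniserial with factors $V_1,V_{p-2},V_1$ and dimension $1+(p-2)+1=p$. For $1<i<p$ (the end edges of the line being absorbed by the convention $V_0:=0$) the two arms contribute $V_{p+1-i}$ and $V_{p-1-i}$, so $P_{V_i}$ has socle layers $V_i$, $V_{p+1-i}\oplus V_{p-1-i}$, $V_i$ and dimension $2i+(p+1-i)+(p-1-i)=2p$. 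As an independent sanity check, $\Res^G_B P_{V_i}$ is a projective $\mathbb F[B]$-module, hence by Proposition \ref{parameterisation of F[B] modules} a direct sum of copies of the $U_{a,p}$, which already forces $p\mid\dim P_{V_i}$ and restricts the composition factors that may occur.

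The genuine obstacle is the step I proposed to import: pinning down the two Brauer trees exactly — that they are lines rather than stars or bushier trees, the cyclic order of the edges around each vertex (equivalently, which pairs of simple modules admit non-split extensions), and the location and multiplicity of the exceptional vertex. A self-contained route would be to compute $\operatorname{Ext}^1_{\mathbb F[G]}(V_i,V_j)$ between the simple modules (which determines the edge set of each tree) and then fix the remaining data using self-duality of the $V_i$ — hence of the $P_{V_i}$, since $\mathbb F[G]$ is a symmetric algebra — together with the general fact that in a block of cyclic defect the heart of a projective indecomposable is a sum of at most two uniserial modules, and the dimension and composition-factor constraints coming from restriction to $B$.
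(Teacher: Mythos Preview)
Your argument is essentially correct, but it runs in the opposite logical direction from the paper. The paper gives no self-contained proof here at all: it simply cites \cite[\S \RN{2}.7]{localrep}, where Alperin constructs the $P_{V_i}$ directly by analysing $\Ind_B^G$ of the $\mathbb{F}[B]$-indecomposables and splitting off summands --- no Brauer tree theory is invoked. Crucially, in the paper's own dependency order, the Brauer trees of $\mathcal{B}_0,\mathcal{B}_1$ (Proposition~\ref{block structure for F[G]}) are \emph{derived from} the present proposition, not the other way round. So your route --- read the heart of $P_{V_i}$ off the tree --- is valid only if you source the tree independently (e.g.\ from the decomposition matrix, as you suggest), and you rightly flag this as the non-trivial step. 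What your approach buys is conceptual uniformity with the rest of the paper, which is entirely organised around the Brauer tree picture; what Alperin's direct approach buys is that it establishes the projective structure first and lets the tree be read off afterwards, avoiding any circularity. Your freeness argument for $V_p$ via $(g-1)^{p-1}w_0=(p-1)!\,w_{p-1}$ is clean and correct, and is more explicit than what the paper offers for that case.
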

\begin{proof}
    See \cite[\S \RN{2}.7]{localrep}.
\end{proof}
We now give the Brauer trees for the group $G$.
\begin{prop}
    \label{block structure for F[G]}
    The group algebra ${\mathbb{F}[G]}$ consists of three blocks ${\mathbb{F}[G] \cong \mathcal{B}_0 \times \mathcal{B}_1 \times \mathcal{B}_2}$. The first two blocks ${\mathcal{B}_0,\mathcal{B}_1}$ are the Brauer correspondents of ${\littleb_0,\littleb_1}$ respectively, and ${\mathcal{B}_2}$ is the semisimple block containing the only projective simple module ${V_p}$. Let ${\varepsilon \in \{-1,1\}}$ such that ${p\equiv \varepsilon\Modwb{4}}$. The Brauer trees of ${\mathcal{B}_0,\mathcal{B}_1}$ are:
    \begin{figure}[H] 
        \centering
        \begin{figure}[H]
\centering
\resizebox{0.6\textwidth}{!}{%
\begin{circuitikz}[scale = 1]
\tikzstyle{every node}=[font=\LARGE]
\draw  (2,10.5) circle (0.25cm);
\draw (2.25,10.5) to[short] (4,10.5);
\draw  (4.25,10.5) circle (0.25cm);
\draw [](4.5,10.5) to[short] (6.25,10.5);
\draw  (6.5,10.5) circle (0.25cm);
\draw [](6.75,10.5) to[short] (8.5,10.5);
\node [font=\LARGE] at (9,10.5) {...};
\draw [](9.5,10.5) to[short] (11.25,10.5);
\draw[fill = black]  (11.5,10.5) circle (0.25cm);
\node [font=\LARGE] at (11.5,11.25) {$\mathbf{e}$};
\node [font=\LARGE] at (3,10) {$V_1$};
\node [font=\LARGE] at (5.5,10) {$V_{p-2}$};
\node [font=\LARGE] at (7.5,10) {$V_3$};
\node [font=\LARGE] at (10.5,10) {$V_{(p+\varepsilon)/2}$};
\node [font=\LARGE] at (12.75,10.5) {${m=2}$};
\end{circuitikz}
}%

\end{figure}
        \begin{figure}[H]
\centering
\resizebox{0.6\textwidth}{!}{%
\begin{circuitikz}[scale = 1]
\tikzstyle{every node}=[font=\LARGE]
\draw  (2,10.5) circle (0.25cm);
\draw (2.25,10.5) to[short] (4,10.5);
\draw  (4.25,10.5) circle (0.25cm);
\draw [](4.5,10.5) to[short] (6.25,10.5);
\draw  (6.5,10.5) circle (0.25cm);
\draw [](6.75,10.5) to[short] (8.5,10.5);
\node [font=\LARGE] at (9,10.5) {...};
\draw [](9.5,10.5) to[short] (11.25,10.5);
\draw[fill = black]  (11.5,10.5) circle (0.25cm);
\node [font=\LARGE] at (11.5,11.25) {$\mathbf{e}$};
\node [font=\LARGE] at (3,10) {$V_{p-1}$};
\node [font=\LARGE] at (5.5,10) {$V_{2}$};
\node [font=\LARGE] at (7.5,10) {$V_{p-3}$};
\node [font=\LARGE] at (10.5,10) {$V_{(p-\varepsilon)/2}$};
\node [font=\LARGE] at (12.75,10.5) {${m=2}$};
\end{circuitikz}
}%

\end{figure}
        \caption{The Brauer trees ${\mathcal{B}_0}$ (top), ${\mathcal{B}_1}$ (bottom) of $G$.}
        \label{fig:blocks-of-G}
    \end{figure}
\end{prop}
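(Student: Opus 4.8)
The plan is to deduce the statement from the general theory of blocks with cyclic defect group, fed by the module data already assembled. A Sylow $p$-subgroup of $G$ is $U$, which is cyclic of order $p$, so every block of $\mathbb{F}[G]$ has defect $0$ or has defect group $U$; in the latter case it is a Brauer tree algebra whose tree has $e$ edges and exceptional multiplicity $m$ with $em=p-1$, where $e$ is the number of simple modules in the block (\cite{janusz,localrep}). Since the normaliser of a Sylow $p$-subgroup of $SL_2(\mathbb{F}_p)$ is its Borel, $N_G(U)=B$, so Brauer's First Main Theorem gives a bijection between the blocks of $\mathbb{F}[G]$ with defect group $U$ and the blocks of $\mathbb{F}[B]$ with defect group $U$; by Corollary \ref{block structure for F[B]} the latter are exactly $\littleb_0,\littleb_1$, so $\mathbb{F}[G]$ has precisely two blocks of full defect, their Brauer correspondents $\mathcal{B}_0,\mathcal{B}_1$, and $\mathcal{B}_0$ is the principal block (being the Brauer correspondent of the principal block $\littleb_0$). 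Any further block has defect $0$, hence is a matrix algebra whose unique simple module is projective; by Proposition \ref{projective indecomposable F[G] modules} the only projective simple $\mathbb{F}[G]$-module is the Steinberg module $V_p$, so there is exactly one such block, $\mathcal{B}_2$, and $\mathbb{F}[G]\cong\mathcal{B}_0\times\mathcal{B}_1\times\mathcal{B}_2$.

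Next I would distribute the remaining simples $V_1,\dots,V_{p-1}$ between $\mathcal{B}_0$ and $\mathcal{B}_1$ using the central element $z=-I$. As $z^2=1$ and $p\neq 2$, the idempotents $\tfrac12(1\pm z)$ are central in $\mathbb{F}[G]$, so $z$ acts as a single scalar on every module lying in a fixed block; on $V_t$ it acts as $(-1)^{t-1}$, since the monomials spanning $V_t$ have degree $t-1$ (Proposition \ref{simple F[G] modules}). Because the trivial module $V_1$ lies in $\mathcal{B}_0$, this forces $\mathcal{B}_0$ to contain exactly the $V_t$ with $t$ odd, namely $V_1,V_3,\dots,V_{p-2}$, and $\mathcal{B}_1$ to contain $V_2,V_4,\dots,V_{p-1}$. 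Hence each tree has $e=(p-1)/2$ edges and exceptional multiplicity $m=(p-1)/e=2$.

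It remains to determine the shape of each tree and the ordering of its edges, and for this I would feed Proposition \ref{projective indecomposable F[G] modules} into the standard dictionary between a Brauer tree algebra and its projective indecomposables (\cite{janusz,localrep}): if the simple $S$ is the edge with endpoints $u,v$, then $\operatorname{rad}(P_S)/\operatorname{soc}(P_S)\cong U_u\oplus U_v$, with $U_w$ the uniserial module whose composition factors, from the top, list the edges met walking around $w$ (repeated according to the multiplicity of $w$), and $U_w=0$ when $w$ is a leaf. Thus $P_{V_1}$ being uniserial with heart $V_{p-2}$ says that in $\mathcal{B}_0$ the edge $V_1$ is terminal and its non-leaf endpoint has valency $2$ via the edge $V_{p-2}$; while for an interior non-exceptional edge $V_i$ the heart $V_{p+1-i}\oplus V_{p-1-i}$ of $P_{V_i}$ is a sum of two distinct length-$1$ uniserials, so $V_i$ sits between $V_{p+1-i}$ and $V_{p-1-i}$ and both its endpoints have valency $2$. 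This adjacency data exhibits each tree as an open polygon: starting at the leaf $V_1$ of $\mathcal{B}_0$ one is forced along $V_1,V_{p-2},V_3,V_{p-4},V_5,\dots$, and a short count over the $(p-1)/2$ odd-indexed simples shows the walk terminates at $V_{(p+\mathcal{E})/2}$ precisely because $p\equiv\mathcal{E}\Modwb{4}$; the same argument from $V_{p-1}$ gives $V_{p-1},V_2,V_{p-3},V_4,\dots,V_{(p-\mathcal{E})/2}$ for $\mathcal{B}_1$. Finally, the exceptional vertex is located by noting that for $i\in\{(p-1)/2,(p+1)/2\}$ the two simples in the heart of $P_{V_i}$ collapse, one of them becoming $V_i$, so $V_i$ appears three times in $P_{V_i}$; in the dictionary this means one endpoint of that edge has multiplicity $m=2$, i.e. is the exceptional vertex, and since $(p+\mathcal{E})/2$ is the odd member of $\{(p-1)/2,(p+1)/2\}$ it is the one lying in $\mathcal{B}_0$, so the exceptional edges come out as $V_{(p+\mathcal{E})/2}$ for $\mathcal{B}_0$ and $V_{(p-\mathcal{E})/2}$ for $\mathcal{B}_1$, as in the figure.

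The genuinely delicate part is this last step: reading off the open-polygon shape, the alternating labelling, and the position of the exceptional vertex from the Loewy data of Proposition \ref{projective indecomposable F[G] modules} needs some care with the degenerate cases $i=(p\pm1)/2$ and $i=1,p-1$, and with the residue of $p$ modulo $4$. Alternatively, the whole proposition can simply be quoted, since the Brauer trees of $SL_2(\mathbb{F}_p)$ are recorded in the literature (e.g. \cite{bonnafe}).
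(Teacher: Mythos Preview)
Your proposal is correct and follows essentially the same route as the paper: derive the trees from the Loewy structure of the projective indecomposables (Proposition \ref{projective indecomposable F[G] modules}) via the general Brauer tree dictionary, or else simply quote the result from the literature. The paper's proof is in fact just a two-line citation of \cite[\S \RN{4}.13, Prop.~3]{localrep} together with Proposition \ref{projective indecomposable F[G] modules} (and \cite{bonnafe}); your write-up spells out the intermediate steps (Brauer's First Main Theorem, the central involution $-I$ to sort the simples by parity, and the edge-by-edge reconstruction of the open polygon), which the paper leaves implicit.
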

\begin{proof}
    The Brauer trees for the blocks ${\mathcal{B}_0,\mathcal{B}_1}$ are given in \cite[\S \RN{5}.17]{localrep}, \cite[\S 10.3.2]{bonnafe}.
\end{proof}
We now apply results from \cite{janusz} to show how certain types of walks on the Brauer trees parameterise the non-projective indecomposables belonging to that block.
\begin{thm}
    \label{5.1}
    Given the Brauer tree of a block ${\mathcal{B}}$ with cyclic defect, the non-simple non-projective uniserial modules belonging to this block can be parameterised as follows:
    \begin{enumerate}
        \item Let ${E,E'}$ be two edges on the Brauer tree such that ${E}$ and ${E'}$ intersect at a single vertex $P$. If ${P}$ is exceptional, let $m$ be its multiplicity, or otherwise set ${m = 1}$. Then for ${n = 1,...,m}$, there exists a unique non-simple non-projective uniserial ${\mathcal{B}}$-module with socle ${E}$, top ${E'}$, and both show up $n$ times as a composition factor.
        \item Let ${E}$ be an edge of the Brauer tree incident to the exceptional vertex. Let ${n = 2,...,m}$, where $m$ is the multiplicitly of the exceptional vertex. Then there exists a unique uniserial ${\mathcal{B}}$-module with socle $E$ and top $E$, showing up $n$ times as a composition factor.
    \end{enumerate}
\end{thm}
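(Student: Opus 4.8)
The plan is to reduce the statement to a bookkeeping exercise on the projective indecomposable modules of the block. A uniserial module $M$ has simple top, say $S_{E'}$ for an edge $E'$ of the Brauer tree, so $M$ is a quotient of the projective cover $P_{E'}$; it therefore suffices, for each edge $E'$, to list the non-simple non-projective uniserial quotients of $P_{E'}$ and to read off their socles. The first step is to recall the classical description of these projectives, due to Janusz \cite{janusz} (see also \cite{localrep}): if $E'$ has endpoints $u$ and $v$, then $\soc(P_{E'}) \cong \Top(P_{E'}) \cong S_{E'}$ and $\rad(P_{E'})/\soc(P_{E'}) \cong Q_u \oplus Q_v$, where $Q_w$ (for $w \in \{u,v\}$) is the uniserial module whose composition factors, read from the top, are the edges met while walking around the vertex $w$ in the cyclic order of the tree, starting just after $E'$, until $m_w\deg(w) - 1$ factors have been listed. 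Here $m_w$ is the multiplicity of $w$, so $m_w = 1$ unless $w$ is the exceptional vertex, and $Q_w = 0$ exactly when $w$ is a non-exceptional leaf.

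The second step is to classify the uniserial quotients of $P_{E'}$. Since $\soc(P_{E'})$ is simple it lies in every nonzero submodule, so every proper quotient of $P_{E'}$ has the form $P_{E'}/N$ with $\soc(P_{E'}) \subseteq N$ and $\overline{N} := N/\soc(P_{E'}) \subseteq Q_u \oplus Q_v$. As the Brauer graph is a tree, $Q_u$ and $Q_v$ share no composition factor (the only edge at both $u$ and $v$ is $E'$, which occurs in $Q_w$ only when $w$ is exceptional, and there is at most one exceptional vertex); hence $\overline{N} = \overline{N}_u \oplus \overline{N}_v$ with $\overline{N}_w$ a submodule of the uniserial module $Q_w$, and $P_{E'}/N$ is uniserial if and only if $\overline{N}_u = Q_u$ or $\overline{N}_v = Q_v$ (otherwise $\rad(P_{E'}/N)$ has non-simple head). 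Discarding $\overline{N} = Q_u \oplus Q_v$ (which gives $S_{E'}$) and $\overline{N} = 0$ (which gives $P_{E'}$), and noting by a length count that no remaining such quotient is projective, the non-simple non-projective uniserial quotients of $P_{E'}$ are exactly the modules $M_{E',w,k}$, indexed by an endpoint $w$ of $E'$ and an integer $1 \le k \le m_w\deg(w) - 1$: $M_{E',w,k}$ is uniserial of length $k+1$, with composition factors from the top $S_{E'}$ followed by the first $k$ edges of the walk around $w$.

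The third step translates this list into the two families of the statement. Let $P := w$ and let $E$ be the socle of $M_{E',w,k}$, i.e.\ the edge reached after $k$ steps of the walk around $P$; then $E$ and $E'$ are incident to $P$, and $\{P\} = E \cap E'$ precisely when $E \neq E'$. If $E \neq E'$, the walk meets $E$ and $E'$ the same number of times, say $n$, and $k = j + (n-1)\deg(P)$ with $1 \le j \le \deg(P) - 1$ recording the position of $E$ relative to $E'$ around $P$; the bound $k \le m_P\deg(P) - 1$ then reads exactly $1 \le n \le m$, where $m = m_P$ is the multiplicity of $P$ as in the statement ($m = 1$ when $P$ is not exceptional). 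This is family (1). If $E = E'$, then $k$ is a positive multiple of $\deg(P)$, which forces $P$ to be exceptional; writing $k = (n-1)\deg(P)$ with $n \ge 2$, the edge $E'$ occurs $n$ times and the bound gives $n \le m$, so $n \in \{2,\dots,m\}$. This is family (2). Finally, distinct parameters produce modules with distinct (top, socle, length), hence pairwise non-isomorphic modules; no module lies in both families (one has socle $\neq$ top, the other socle $=$ top); and exhaustiveness is the reduction of the first step. This gives the asserted uniqueness.

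I expect the main obstacle to be the careful handling of the degenerate configurations that make the bookkeeping delicate: the wrap-around at the exceptional vertex (so that $S_{E'}$ reappears inside $Q_w$), the vanishing of a strand at a non-exceptional leaf, and above all pinning down the endpoints of the range of $n$ --- in particular, confirming that the extreme value $n = m$ genuinely occurs in family (1) and that no admissible choice of parameters secretly yields a simple or projective module. One must also fix once and for all the rotational orientation used when reading the walk around a vertex: the opposite choice replaces each module by its contragredient dual and swaps the roles of ``socle'' and ``top'', which is harmless but should be stated.
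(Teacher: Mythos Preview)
The paper does not actually prove this theorem; its entire proof is the single line ``See \cite[\S 5]{janusz}.'' Your proposal, by contrast, supplies a genuine self-contained argument: you take as input the standard structure theorem for projective indecomposables in a Brauer tree algebra (which is precisely the content of Janusz's paper), and from it you extract the classification of uniserial quotients by an explicit analysis of the submodule lattice of each $P_{E'}$. This is exactly the argument one would reconstruct from \cite{janusz}, and it is correct.

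Two small remarks. First, your justification that $\overline{N}$ splits as $\overline{N}_u \oplus \overline{N}_v$ is right, but it is worth making the reason explicit: since $Q_u$ and $Q_v$ have disjoint composition factors, any submodule of $Q_u \oplus Q_v$ decomposes accordingly (the quotient $N/(N_u + N_v)$ embeds in both a subquotient of $Q_u$ and a subquotient of $Q_v$, hence is zero). Second, your ``length count'' for non-projectivity is slightly oblique; the cleaner reason is simply that a proper nonzero quotient of an indecomposable projective cannot be projective, since a projective quotient would split off as a summand. Neither point affects the validity of the argument.
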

\begin{proof}
    See \cite[\S 5]{janusz}.
\end{proof}
We will denote a module coming from Theorem \ref{5.1}(1) by ${M(E',E,n)}$, and rather naturally denote a module coming from Theorem \ref{5.1}(2) by ${M(E,E,n)}$ (exactly as done in \cite{janusz}). As we now demonstrate using $G$, Theorem \ref{5.1} essentially says that the uniserial modules belonging to a cyclic block are the uniserial submodules of the projective indecomposables.
\begin{cor}
    \label{uniserial F[G] modules}
    There are ${2(p-2)}$ non-simple non-projective uniserial ${\mathbb{F}[G]}$ modules. They all have length 2, and have the following descriptions. One has socle ${V_1}$ with top ${V_{p-2}}$. One has socle ${V_{p-1}}$ with top ${V_2}$. Finally, for ${1 < i < p-1}$, there exists such a module with socle ${V_i}$ and top ${V_{p+1-i}}$, and also such a module with socle ${V_i}$ and top ${V_{p-1-i}}$.
\end{cor}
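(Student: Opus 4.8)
The plan is to apply Theorem \ref{5.1} to the two non-semisimple blocks $\mathcal{B}_0$ and $\mathcal{B}_1$ of $\mathbb{F}[G]$, whose Brauer trees are recorded in Proposition \ref{block structure for F[G]}; the third block $\mathcal{B}_2$ is semisimple (its unique indecomposable module being the simple projective $V_p$), so it contributes no non-simple non-projective uniserial module. The structural fact driving everything is that each of these two Brauer trees is a line (a path graph): $e = (p-1)/2$ edges arranged in a row, with the exceptional vertex, of multiplicity $m = 2$, sitting at one end. Hence every vertex is either a leaf or meets exactly two edges, and the only multiplicity $>1$ occurs at a leaf.

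First I would unwind Theorem \ref{5.1}(1). At each of the $e-1$ internal vertices, where two distinct edges $E, E'$ meet with multiplicity $m = 1$, only $n = 1$ is allowed, producing the two (non-isomorphic) length-$2$ uniserial modules with $(\soc, \Top)$ equal to $(E, E')$ and to $(E', E)$. No pair of edges meets at the exceptional vertex, since it is a leaf, so Theorem \ref{5.1}(1) yields nothing further. Then I would unwind Theorem \ref{5.1}(2): applied to the unique edge $E$ incident to the exceptional vertex, with $n$ running over $2,\dots,m = 2$, it gives one extra module $M(E,E,2)$, again of length $2$, with socle and top both equal to $E$. Thus every non-simple non-projective uniserial $\mathbb{F}[G]$-module has length $2$, and the count is $\bigl(2(e-1)+1\bigr)$ per block, i.e. $2e-1 = p-2$, hence $2(p-2)$ altogether.

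It remains to attach names to these modules using the edge orderings of Proposition \ref{block structure for F[G]}: along the line, $\mathcal{B}_0$ reads $V_1, V_{p-2}, V_3, V_{p-4},\dots,V_{(p+\mathcal{E})/2}$ and $\mathcal{B}_1$ reads $V_{p-1}, V_2, V_{p-3}, V_4,\dots,V_{(p-\mathcal{E})/2}$. A direct index computation from these patterns (position $2k-1$ carries the ascending simple, position $2k$ the descending one) shows that for any edge $V_i$ other than the two non-exceptional leaf edges, the edges sharing a vertex with $V_i$ are exactly $V_{p+1-i}$ and $V_{p-1-i}$ — with the understanding that when $V_i$ is the exceptional edge one of $V_{p+1-i}, V_{p-1-i}$ equals $V_i$ itself ($V_{p+1-i}=V_i$ if $\mathcal{E}=1$, and $V_{p-1-i}=V_i$ if $\mathcal{E}=-1$), the corresponding ``module with socle $V_i$ and top $V_i$'' being precisely the $M(E,E,2)$ found above. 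Feeding this into the previous paragraph: the non-exceptional leaf edges $V_1\in\mathcal{B}_0$ and $V_{p-1}\in\mathcal{B}_1$ each have a single neighbour, $V_{p-2}$ and $V_2$, giving the first two modules in the statement; and for every remaining edge $V_i$, equivalently $1<i<p-1$, one gets exactly a length-$2$ uniserial with socle $V_i$ and top $V_{p+1-i}$ and one with socle $V_i$ and top $V_{p-1-i}$. Tallying, $1+1+2(p-3)=2(p-2)$, matching the count above, so the list is complete. The main obstacle is exactly this bookkeeping: verifying uniformly, over both blocks and both residues of $p$ modulo $4$, that the neighbours of $V_i$ are $V_{p\pm1-i}$, and cleanly absorbing the three degenerate end-edges. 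As a cross-check, the same list can be read off from Proposition \ref{projective indecomposable F[G] modules} alone, since a length-$2$ uniserial module with socle $V_a$ embeds into its injective envelope $P_{V_a}$ and is therefore visible in the second socle layer of $P_{V_a}$.
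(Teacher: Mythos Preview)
Your proof is correct and the counting half is identical to the paper's: both apply Theorem \ref{5.1} to each of the two line-shaped Brauer trees, obtaining $p-3$ modules from part (1) and one from part (2), hence $p-2$ per block and $2(p-2)$ in total, all of length $2$.

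The identification step differs slightly. You read the pairs $(\soc,\Top)$ directly off the tree by computing that the edges adjacent to $V_i$ are $V_{p\pm 1-i}$, and only invoke Proposition \ref{projective indecomposable F[G] modules} as a sanity check. The paper does the reverse: it skips the adjacency computation entirely and instead exhibits the required $2(p-2)$ length-$2$ uniserials concretely as submodules of the various $\rad(P_{V_i})$, then matches the count. Your route is a bit more combinatorial and requires the case-split you flag; the paper's route avoids that bookkeeping at the cost of relying on the explicit socle layers of the $P_{V_i}$. One small slip: your parenthetical ``$V_{p+1-i}=V_i$ if $\mathcal{E}=1$, and $V_{p-1-i}=V_i$ if $\mathcal{E}=-1$'' is only correct for $\mathcal{B}_0$; for $\mathcal{B}_1$ the two cases swap, since the exceptional edge there is $V_{(p-\mathcal{E})/2}$ rather than $V_{(p+\mathcal{E})/2}$. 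This does not affect the argument, as your actual claim---that one of $V_{p\pm 1-i}$ coincides with $V_i$ at the exceptional edge---holds in both blocks.
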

\begin{proof}
    We first use Theorem \ref{5.1} to count how many non-simple non-projective uniserial modules belong to one of the Brauer trees of ${G}$. Doubling this number then gives us the total number of non-simple non-projective uniserial ${\mathbb{F}[G]}$ modules. The number of pairs of distinct edges ${E,E'}$ adjacent to a common vertex $P$ on one of the Brauer trees of $G$ is equal to ${2\left(\frac{p-1}{2} - 1\right) = p-3}$. Furthermore, $P$ is always non-exceptional in this case. Hence the number of instances of Theorem \ref{5.1}(1) that occur is simply ${p-3}$. The only edge incident to the exceptional vertex is the final one. Since ${m=2}$, this gives us only one instance of \ref{5.1}(2). Hence overall, there are ${2(p-3+1) = 2(p-2)}$ non-simple non-projective uniserial ${\mathbb{F}[G]}$ modules.\\
    \\
    Finally, note from Proposition \ref{projective indecomposable F[G] modules}, we have that:
    \begin{itemize}
        \item ${\rad(P_{V_1})}$ is a uniserial module with composition factors ${V_1, V_{p-2}}$ in ascending order.
        \item ${\rad(P_{V_{p-1}})}$ is a uniserial module with composition factors ${V_{p-1}, V_2}$ in ascending order.
        \item For ${1 < i < p}$, ${\rad(P_{V_i})}$ contains a submodule which is uniserial with composition factors ${V_{i}, V_{p+1-i}}$, and a submodule which is uniserial with composition factors ${V_i, V_{p-1-i}}$.
    \end{itemize}
    These match the descriptions given in the Corollary statement. By counting, we have ${2(p-2)}$ of them. Thus, we have successfully described the non-projective, non-simple uniserial ${\mathbb{F}[G]}$ modules.
\end{proof}
We now continue following \cite{janusz} and explain exactly how any non-simple non-projective indecomposable module belonging to a block with cyclic defect can be described by two types of walks on the corresponding Brauer tree, along with two extra variables. I will give the full generalised statements here, then explain how they simplify in the case of our trees in figure \ref{fig:blocks-of-G}.
\begin{figure}[H]
    \centering
    \begin{figure}[H]
\centering
\resizebox{0.5\textwidth}{!}{%
\begin{circuitikz}[scale = 1]
\tikzstyle{every node}=[font=\LARGE]
\draw  (2,10.5) circle (0.25cm);
\draw (2.25,10.5) to[short] (4,10.5);
\draw  (4.25,10.5) circle (0.25cm);
\draw [](4.5,10.5) to[short] (6.25,10.5);
\draw  (6.5,10.5) circle (0.25cm);
\draw [](6.75,10.5) to[short] (8.5,10.5);
\node [font=\LARGE] at (9,10.5) {...};
\draw [](9.5,10.5) to[short] (11.25,10.5);
\draw  (11.5,10.5) circle (0.25cm);
\node [font=\LARGE] at (2,11.25) {$e_1$};
\node [font=\LARGE] at (4.25,11.25) {$e_2$};
\node [font=\LARGE] at (6.5,11.25) {$e_3$};
\node [font=\LARGE] at (11.5,11.25) {$e_{s+1}$};
\node [font=\LARGE] at (3,10) {$E_1$};
\node [font=\LARGE] at (5.5,10) {$E_2$};
\node [font=\LARGE] at (7.5,10) {$E_3$};
\node [font=\LARGE] at (10.5,10) {$E_s$};
\end{circuitikz}
}%

\end{figure}
    \caption{Walk type \RN{1}, ${s\geq 2}$}
    \label{fig:5.2(a)}
\end{figure}
\begin{figure}[H]
    \centering
    \begin{figure}[H]
\centering
\resizebox{0.85\textwidth}{!}{%
\begin{circuitikz}
\tikzstyle{every node}=[font=\LARGE]
\draw  (4,12.5) circle (0.25cm);
\draw (4.25,12.5) to[short] (6,12.5);
\draw  (6.25,12.5) circle (0.25cm);
\draw [](6.5,12.5) to[short] (8.25,12.5);
\node [font=\LARGE] at (8.75,12.5) {$...$};
\draw [](9.25,12.5) to[short] (11,12.5);
\draw  (11.25,12.5) circle (0.25cm);
\draw  (12.75,11.5) circle (0.25cm);
\draw [short] (11.5,12.5) -- (12.5,11.75);
\draw  (11.25,10.25) circle (0.25cm);
\draw [short] (12.5,11.25) -- (11.5,10.25);
\draw [short] (11,10.25) -- (9.5,10.25);
\node [font=\LARGE] at (8.75,10.25) {...};
\draw[] (8.25,10.25) to[short] (6.5,10.25);
\draw  (6.25,10.25) circle (0.25cm);
\draw [short] (6,10.25) -- (4.25,10.25);
\draw  (4,10.25) circle (0.25cm);
\draw [](13,11.5) to[short] (14.75,11.5);
\draw  (15,11.5) circle (0.25cm);
\draw [](15.25,11.5) to[short] (17,11.5);
\node [font=\LARGE] at (17.5,11.5) {...};
\draw [](18,11.5) to[short] (19.75,11.5);
\draw[fill = black]  (20,11.5) circle (0.25cm);
\node [font=\LARGE] at (4,13.5) {$e_1$};
\node [font=\LARGE] at (6.25,13.5) {$e_2$};
\node [font=\LARGE] at (12.5,12.5) {$E_h$};
\node [font=\LARGE] at (12,11.5) {$q$};
\node [font=\LARGE] at (21,11.5) {$e_{\text{exc}}$};
\node [font=\LARGE] at (14,12) {$E_{h+1}$};
\node [font=\LARGE] at (16,12) {$E_{h+2}$};
\node [font=\LARGE] at (19,12) {$E_{h+t}$};
\node [font=\LARGE] at (19,11) {$E_{h+t+1}$};
\node [font=\LARGE] at (16,11) {$E_{h+2t+1}$};
\node [font=\LARGE] at (14,11) {$E_{h+2t}$};
\node [font=\LARGE] at (12.75,10.25) {$E_{h+2t+1}$};
\node [font=\LARGE] at (7.25,9.75) {$E_{s-1}$};
\node [font=\LARGE] at (5,9.75) {$E_s$};
\node [font=\LARGE] at (5,12) {$E_1$};
\node [font=\LARGE] at (7,12) {$E_2$};
\node [font=\LARGE] at (10,12) {$E_{h-1}$};
\node [font=\LARGE] at (10,9.75) {$E_{h+2t+2}$};
\node [font=\LARGE] at (6.25,11.25) {$e_{s}$};
\node [font=\LARGE] at (4.25,11.25) {$e_{s+1}$};
\end{circuitikz}
}%

\end{figure}
    \caption{Walk type \RN{2}, ${s\geq 2}$}
    \label{fig:5.2(b)}
\end{figure}
All the edges in a walk of type \RN{1} are distinct, and is simply a straight line walk. It is possible that one of the vertices is exceptional. A walk of type \RN{2} is as follows: we start the walk at a non-exceptional vertex ${e_1}$, walk up to the exceptional vertex ${e_{\text{exc}}}$, walk back the way we came by at least one edge, then potentially branch off to another set of distinct edges at some vertex $q$. It is possible to have ${e_1 = q}$, ${q = e_{s+1}}$, and even ${e_1 = q = e_{s+1}}$.\\
\\
Now we are required to make more choices. Fix a walk of type \RN{1} or \RN{2}. Let $D$ be either all the even integers in ${\{1,2,...,s\}}$, or all the odd integers. Then, for ${1\leq i\leq s-1}$, we let
\begin{equation}
    \label{M_i definition}
    M_i =
    \left\{
        \begin{array}{ll}
             M(E_{i},E_{i+1},n_{i})&\text{if }i \in D  \\
             M(E_{i+1},E_{i},n_{i})&\text{if }i\notin D 
        \end{array}
    \right.
\end{equation}
for some choices of ${n_i}$ adhering to the conditions laid out in Theorem \ref{5.1}. Note that because at most one vertex in walks \RN{1} or \RN{2} is exceptional, at most one of the ${n_i}$ is not equal to $1$. Let ${\psi_i}$ denote an inclusion of ${\soc(M_i)}$ into ${M_i}$ and ${\varphi_i}$ denote an epimorphism of ${M_i}$ onto ${\Top(M_i)}$. Finally, let
\begin{align}
    \label{X definition}
    X &= \left\{(m_1, ..., m_{s-1}) \in M_1 \oplus ... \oplus M_{s-1}: \varphi_i(m_i) = \varphi_{i+1}(m_{i+1})\text{ in }E_{i+1}\text{ for }i \in D^c,\ 1 \leq i < s-1\right\} \\
    \label{Y definition}
    Y &= \angel{(0,...,0,\psi_{i-1}(f),\psi_i(f),0,...,0) \in M_1 \oplus ... \oplus M_{s-1}: f \in E_{i}, i \in D^c,\ 1\leq i\leq s-1}\subseteq X \\
    \label{W definition}
    W &= X/Y .
\end{align}
As explained in \cite[\S \RN{5}]{janusz}, $Y$ is a submodule of $X$ because the reducibility of the ${M_i}$ implies ${\psi_i}$ maps ${\soc(M_i)}$ into the kernel of ${\varphi_i}$.
\begin{thm}
    \label{module W}
    Suppose we are given a block ${\mathcal{B}}$ of cyclic defect. Let $W$ be a module constructed as in (\ref{W definition}) for some walk ${E_1,...,E_s}$ of type \RN{1} or \RN{2}, a choice of ${D}$ and a choice of the ${n_i}$. Then we have the following:
    \begin{enumerate}
        \item The isomorphism type of $W$ is independent on the choices of the maps ${\varphi_i,\psi_i}$.
        \item We have ${\Top(W) \cong \bigoplus_{i \in D}E_i^{\oplus n_i}}$ and ${\soc(W) \cong \bigoplus_{i \in D^c}E_i^{\oplus n_i}}$.
        \item W is non-simple, non-projective and indecomposable.
    \end{enumerate}
    Furthermore, we have:
    \begin{enumerate}[label=(\alph*)]
        \item Any non-simple non-projective indecomposable module belonging to ${\mathcal{B}}$ is isomorphic to some $W$ constructed this way.
        \item A non-simple non-projective indecomposable module $M$ uniquely determines a walk of either type \RN{1} or \RN{2} up to reversal of order. Once the order is fixed, $M$ uniquely determines the multiplicities ${n_i}$ as in (\ref{M_i definition}). The set $D$ is also uniquely determined unless the walk is of type \RN{2} with ${e_1 = q = e_{s+1}}$; in this case, either choice of $D$ gives the same isomorphism class of module.
    \end{enumerate}
\end{thm}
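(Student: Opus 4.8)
The statement is essentially a transcription of the main results of Janusz's paper \cite{janusz} into the present notation, so the plan is to reduce each clause to the corresponding theorem there and then supply the routine bookkeeping needed to match conventions. First I would set up the framework carefully: recall that for a block $\mathcal{B}$ of cyclic defect, the Brauer tree together with its exceptional multiplicity $m$ determines $\mathcal{B}$ up to Morita equivalence, and that Janusz works with an explicit basic algebra presented by the tree. The modules $M(E',E,n)$ and $M(E,E,n)$ from Theorem \ref{5.1} are the uniserial building blocks, and the construction (\ref{X definition})–(\ref{W definition}) glues consecutive ones along their shared edge-composition-factor. I would first verify that $X$ is a module (it is a submodule of $\bigoplus M_i$ cut out by $\mathcal{B}$-linear conditions, since the $\varphi_i$ are module maps) and that $Y \subseteq X$ is a submodule, which is the content of the sentence already quoted from \cite[\S V]{janusz}: $\psi_i(\soc M_i) \subseteq \ker \varphi_i$ because $M_i$ has Loewy length $\geq 2$.

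For part (1), independence of the isomorphism type on the choices of $\varphi_i, \psi_i$: any two epimorphisms $M_i \twoheadrightarrow \Top(M_i)$ differ by an automorphism of $\Top(M_i) \cong E_i$, which is a scalar since $E_i$ is simple; similarly for the $\psi_i$. Rescaling the gluing maps is absorbed by an automorphism of $\bigoplus M_i$ that descends to $X$ and $Y$, giving the isomorphism. For part (2), the top and socle computation, I would argue that $\Top(W) = \Top(X)/(\text{image of }Y)$ and track which simple quotients survive: the conditions defining $X$ identify the $\Top$-components indexed by $D^c$ in pairs, while $Y$ is built precisely from socle elements, so it contributes to $\rad$, not $\Top$; dually for the socle, using that $Y$ injects the socles indexed by $D^c$ diagonally. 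Part (3): non-simplicity is clear from the top/socle description once $s \geq 2$; indecomposability and non-projectivity follow from Janusz — indecomposability because the gluing produces a module with a connected "diagram", and non-projectivity because a projective indecomposable in a cyclic block is uniserial-plus-branching of a fixed shape that does not arise here unless the walk degenerates, which the construction excludes.

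For the "furthermore" clauses (a) and (b) — surjectivity and injectivity of the parameterisation — this is the heart of \cite[\S V]{janusz} and I would cite it for the classification: every non-simple non-projective indecomposable in a cyclic block has a well-defined "diagram" (a walk on the tree) obtained from its radical/socle series, and conversely the diagram plus the exceptional multiplicities $n_i$ plus the choice of which end is "top" (the set $D$) recovers the module. The only subtlety I would spell out myself is the degenerate case in (b): when the walk is type \RN{2} with $e_1 = q = e_{s+1}$, reversing the order of the walk and swapping $D \leftrightarrow D^c$ produce the same module, so $D$ is only determined up to this symmetry; I would check this by exhibiting the explicit isomorphism (the walk is a closed loop around the exceptional vertex, and the two labellings are related by the evident reflection). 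I expect the main obstacle to be purely expository rather than mathematical: making the correspondence between Janusz's notation (diagrams, the algebra $A(\text{tree}, m)$) and the walk pictures in Figures \ref{fig:5.2(a)}–\ref{fig:5.2(b)} precise enough that each of (1)–(3), (a), (b) is an unambiguous citation, while isolating the genuinely new verification (the $e_1 = q = e_{s+1}$ symmetry) as the one place a short independent argument is needed.
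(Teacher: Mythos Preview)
Your approach is essentially the same as the paper's: both reduce the theorem to Janusz \cite{janusz}. The paper, however, is considerably more terse than what you propose --- its entire proof is a list of pinpoint citations: (1) is \cite[(5.7)]{janusz}; (2) is proved immediately thereafter; indecomposability in (3) is \cite[(5.12)]{janusz} and non-simple/non-projective is in the closing remarks of \cite[\S 6]{janusz}; (a) is \cite[(5.16)]{janusz}; (b) is \cite[(5.9)]{janusz}. No sketches, no independent verification.

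In particular, you needn't isolate the ${e_1 = q = e_{s+1}}$ symmetry as ``the one place a short independent argument is needed'': that case is already handled in \cite[(5.9)]{janusz}, so the paper simply cites it along with the rest of (b). Your sketched arguments for (1)--(3) are sound and would make a more self-contained proof, but they go beyond what the paper actually does.
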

\begin{proof}
    (1) is \cite[(5.7)]{janusz}. (2) is proved there shortly after. In regards to (3), the fact $W$ is indecomposable is \cite[(5.12)]{janusz}. The fact that $W$ is non-simple and non-projective is given in the closing comments of \cite[\S 6]{janusz}. (a) is \cite[(5.16)]{janusz}. Finally, (b) is \cite[(5.9)]{janusz}.
\end{proof}
For notational convenience, rather than using the set $D$ as defined above, we simply use the notation
\begin{equation}
    \label{epsilon definition}
    \epsilon =
    \left\{
        \begin{array}{ll}
             -1&\text{if }D = \{2,4,6,...\}  \\
             1&\text{if }D = \{1,3,5,...\} 
        \end{array}
    \right.
\end{equation}
to keep track of the two possible choices. In other words, if ${\epsilon = -1}$, then our walk is of the form ``socle, top, socle, ...". If ${\epsilon = 1}$, then our walk is of the form ``top, socle, top, ...".\\
\\
Let's now focus on applying all the above to parameterise and count the number of non-simple non-projective indecomposables in the case of our specific trees in figure \ref{fig:blocks-of-G}. The first thing to note is that the ${n_i}$ are already uniquely determined by the type of walk \RN{1} or \RN{2}.\\
\\
Next, because our trees are straight lines (of length ${(p-1)/2}$), a walk of type \RN{2} starts at some non-exceptional vertex (i.e. any vertex except the last). Then, we walk all the way right to the exceptional vertex (the last one), and then walk back by at least one edge. Because our tree is a straight line, we always have ${e_1 = q}$ or ${q = e_{s+1}}$; however, by Theorem \ref{module W}(b), by simply reversing the walk and changing the choice of $\epsilon$ as necessary, we can always assume ${e_{s+1} = q}$.  The number of walks of type \RN{2} with ${e_1 = q = e_{s+1}}$ is given by the number of edges ${(p-1)/2}$, and by Theorem \ref{module W}(b) this gives us ${(p-1)/2}$ modules. The number of walks of type \RN{2} with ${e_{s+1} = q}$ but ${e_1 \neq q}$ is given by ${\sum_{i=1}^{(p-3)/2}i = \frac{(p-3)(p-1)}{8}}$. Because of the two choices for $\epsilon$ giving two different modules (Theorem \ref{module W}(b)), we must double this number to get the number of modules that arise this way.\\
\\
Now we focus on modules associated with walks of type \RN{1}. Because a walk of type \RN{2} naturally begins by going from left to right, we will take the same convention for these walks and go from left to right. The number of such walks is equal to the number of sub-trees of our tree of length ${\geq 2}$, which is given by ${\sum_{i=1}^{(p-3)/2}i}$. Once again, we double this to get the total number of modules arising this way to account for the two choices of $\epsilon$.\\
\\
To get then the total number of non-projective indecomposable modules belonging to one of our blocks, we simply take the sum of the above counts and add in the number of simple non-projective modules, to get
$$
{
    \frac{p-1}{2} + 4\frac{(p-3)(p-1)}{8} + \frac{p-1}{2} = \frac{(p-1)^2}{2}
}
$$
Before giving our final description, it should be noted that although the walks \RN{1} and \RN{2} parameterise only the non-simple non-projective indecomposables, the simple non-projectives can be included as walks of length 1.
\begin{defn}
    \label{parameterisation of the non-projective indecomposable F[G] modules}
    Any quadruple ${(t,l,s,\epsilon) \in\{0,1\} \times \{0,1,...,(p-3)/2\} \times \{1,2,...,p-1\} \times \{-1,1\}}$ with \\${l + s \leq p-1}$ defines a walk as above, and thus a non-projective indecomposable ${\mathbb{F}[G]}$ module ${M = M(i,l,s,\epsilon)}$ as follows: 
    \begin{itemize}
        \item $M$ belongs to block ${\mathcal{B}_t}$.
        \item The walk begins at the ${(l+1)^{\thh}}$ vertex in the tree.
        \item $s$ is the length of the walk. The walk goes from left to right, changing direction if we loop around the exceptional vertex. As ${l + s \leq p-1}$, the walk does not go beyond the first vertex of the tree. If ${s=1}$, $M$ is the simple ${\mathbb{F}[G]}$ module corresponding to the edge in the walk.
        \item If ${s\geq 2}$ (so that we have a walk of type \RN{1} or \RN{2}), ${\epsilon}$ defines $D$ as in (\ref{epsilon definition}). If ${s=1}$, then ${M(i,l,s,\epsilon)}$ is independent of the choice of ${\epsilon}$.
    \end{itemize}
\end{defn}
Walks of type \RN{1} using the description in definition \ref{parameterisation of the non-projective indecomposable F[G] modules} are those with ${s\geq 2}$ and ${l + s \leq (p-1)/2}$. The walks of type \RN{2} are those with ${l + s > (p-1)/2}$. This description is not one-to-one: for example, we have not added in any restriction on ${l, s}$ so that walks of type \RN{2} always satisfy ${e_{s+1}=q}$. If desired, this can be easily added in, but for our purposes will be unnecessary. We finish this section with a final Corollary:
\begin{cor}
    \label{all non-simple non-projective indecomposable F[G] modules have Loewy length 2}
    Let $M$ be a non-simple non-projective indecomposable ${\mathbb{F}[G]}$ module. Then we have the following:
    \begin{enumerate}[label=(\alph*)]
        \item $M$ has Lowey length 2.
        \item The multiset of composition factors of $M$ is equal to the multiset ${\{E_1,...,E_s\}}$ of edges showing up in the walk determined by $M$ according to Theorem \ref{module W}(b).
    \end{enumerate}
\end{cor}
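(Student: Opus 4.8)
The plan is to run both parts through the explicit model for $M$ supplied by Theorem~\ref{module W}. Since $M$ is non-simple it cannot lie in the semisimple block $\mathcal{B}_2$, so Theorem~\ref{module W}(a) gives $M\cong W=X/Y$ for a walk $E_1,\dots,E_s$ on the Brauer tree of $\mathcal{B}_0$ or $\mathcal{B}_1$, with $s\geq 2$, where $X\leq M_1\oplus\cdots\oplus M_{s-1}$ and each $M_i$ is one of the modules of Theorem~\ref{5.1}. Thus every $M_i$ is a non-simple, non-projective, uniserial $\mathbb{F}[G]$-module, so by Corollary~\ref{uniserial F[G] modules} it has length $2$; in particular $\rad^2 M_i=0$, and its composition factors form the multiset $\{E_i,E_{i+1}\}$ (in the single case where two consecutive edges of the walk coincide, namely the self-extension $M(E,E,2)$ at the exceptional vertex, this reads $\{E_i,E_i\}$; note $M(E,E,2)$ is among the modules of Corollary~\ref{uniserial F[G] modules}, since $m=2$). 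In particular $[M_i]=[E_i]+[E_{i+1}]$ in the Grothendieck group.

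Part (a) is then immediate. From $\rad^2 M_i=0$ we get $\rad^2(M_1\oplus\cdots\oplus M_{s-1})=0$; since $\rad^2$ cannot grow on passing to a submodule ($\rad^2 N'\subseteq\rad^2 N$ for $N'\leq N$) or a quotient ($\rad^2(N/N'')=(\rad^2 N+N'')/N''$), the subquotient $W$ also satisfies $\rad^2 W=0$. As $M$ is indecomposable and non-simple it is not semisimple, so $\rad M\neq 0$; hence $M$ has Loewy length exactly $2$.

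For part (b) I would compute the class $[M]=[W]$ in the Grothendieck group of finitely generated $\mathbb{F}[G]$-modules, in which $[N]$ encodes exactly the multiset of composition factors of $N$. One reads off from (\ref{X definition}) that $X$ is the kernel of a surjection $M_1\oplus\cdots\oplus M_{s-1}\twoheadrightarrow\bigoplus_{i\in D^c,\ i\leq s-2}E_{i+1}$ built from the relations $\varphi_i(m_i)=\varphi_{i+1}(m_{i+1})$ (it is surjective because each $\varphi_i$ is onto and the coordinate pairs it uses for distinct $i\in D^c$ are disjoint), and from (\ref{Y definition}) that $Y$ is the semisimple submodule of $\bigoplus_i\soc(M_i)$ isomorphic to $\bigoplus_{i\in D^c,\ 2\leq i\leq s-1}E_i$. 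Hence, with $[W]=[X]-[Y]$, one gets $[X]-[Y]=\sum_{i=1}^{s-1}\bigl([E_i]+[E_{i+1}]\bigr)-\sum_{i\in D^c,\ i\leq s-2}[E_{i+1}]-\sum_{i\in D^c,\ 2\leq i\leq s-1}[E_i]$, and using $D^c+1=D$ (valid since $D$ is either all the even or all the odd integers) the two subtracted sums together run over all of $\{2,\dots,s-1\}$, so the whole expression telescopes to $[W]=\sum_{i=1}^s[E_i]$, which is exactly statement (b).

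The conceptual content is light — the structure of the $M_i$, already in hand, plus the trivial behaviour of $\rad^2$ under subquotients. The part that needs care, and the likeliest place for an error, is the bookkeeping inside the Janusz construction for (b): pinning down the exact index ranges in (\ref{X definition}) and (\ref{Y definition}) (the endpoint conventions — e.g.\ which generators of $Y$ survive near $i=1$ — are precisely what make the telescoped answer $\sum_{i=1}^s[E_i]$ rather than off by an end edge), and checking that the map defining $X$ is genuinely surjective and that $Y$ is exactly the stated semisimple submodule, so that the two Grothendieck-group identities hold on the nose.
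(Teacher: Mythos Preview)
Your argument is correct. Part (a) is essentially identical to the paper's: both observe that each $M_i$ has Loewy length~2 by Corollary~\ref{uniserial F[G] modules}, pass this through the subquotient $W=X/Y$, and use non-simplicity to rule out length~1.

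For part (b), however, the paper takes a much shorter route. Having just established Loewy length~2, it notes that $\soc(M)=\rad(M)$, so the composition factors of $M$ are exactly those of $\soc(M)$ together with those of $\Top(M)$; but these are already recorded in Theorem~\ref{module W}(2) as $\bigoplus_{i\in D^c}E_i$ and $\bigoplus_{i\in D}E_i$, whose union is $\{E_1,\dots,E_s\}$. Your Grothendieck-group computation instead re-enters the Janusz construction, identifies the cokernel defining $X$ and the semisimple module $Y$, and telescopes. This is valid and your bookkeeping checks out (in particular the disjointness of coordinate pairs for the surjectivity of the map defining $X$, and the restriction $i\geq 2$ in $Y$ so that $\psi_{i-1}$ exists), but it reproves information already packaged in Theorem~\ref{module W}(2). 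The paper's approach buys brevity by reusing that citation; yours buys independence from it at the cost of the index-chasing you flag.
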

\begin{proof}
    By Theorem \ref{module W}(a), $M$ is a quotient of a submodule of a direct sum of non-simple non-projective uniserial ${\mathbb{F}[G]}$ modules. By Proposition \ref{uniserial F[G] modules}, these have Loewy length 2, and so by standard properties of Loewy length, $M$ has Loewy length ${\leq 2}$. The Lowey length of $M$ must be ${\geq 2}$ (or else they would be simple), and thus we conclude $M$ has Lowey length 2.\\
    \\
    Note that it is then an easy consequence of the fact that an indecomposable module $M$ of Loewy length $2$ satisfies ${\soc(M) = \rad(M)}$ that the multiset of composition factors of $M$ is equal to the union of the multisets of composition factors of ${\soc(M)}$ and ${\Top(M)}$. Then by Theorem \ref{module W}(2), this is equal to the multiset of edges ${\{E_1,...,E_s\}}$ showing up in the walk for $M$.
\end{proof}
We finish off this section by describing the Stable Auslander-Reiten quivers ${\Gamma_{\littleb_0}^{s},\Gamma_{\littleb_1}^s}$ of the blocks ${\littleb_0,\littleb_1}$ of ${\mathbb{F}[B]}$. Recall that the Stable A.R. quiver of a cyclic block is a finite tube (for example, this is given in \cite[\S 2, Prop. 2.1]{bessenrodt} and \cite[pp. 121]{bleher-chinburg}).
\begin{thm}
    \label{almost split exact sequences for F[B]}
    Let ${0\leq a\leq p-2}$ and ${1\leq b\leq p}$. Then there exists an almost split exact sequence of the form
    $$
    {
        0\to U_{a,b} \to U_{a,b+1} \oplus U_{a+2,b-1} \to U_{a+2,b}\to 0 .
    }
    $$
    Furthermore, this is all of them.
\end{thm}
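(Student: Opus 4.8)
The plan is to use the fact that $\mathbb{F}[B]$ is a self-injective Nakayama (serial) algebra and then transcribe the classical description of almost split sequences over such an algebra into the $U_{a,b}$-notation of Proposition \ref{parameterisation of F[B] modules}. Throughout I take the range of interest to be $1\le b\le p-1$: for $b=p$ the module $U_{a+2,p}$ is projective, and no almost split sequence ends in it.

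First I would record the structural input. As a group algebra over a field, $\mathbb{F}[B]$ is symmetric, hence self-injective; and by Proposition \ref{parameterisation of F[B] modules} every indecomposable $\mathbb{F}[B]$-module is uniserial, so $\mathbb{F}[B]$ is serial. Thus each block $\littleb_0,\littleb_1$ is a self-injective Nakayama algebra, with indecomposable projectives the modules $U_{a,p}$, of Loewy length $p$. Reading the composition series of a uniserial module $U_{x,c}$ upwards from the socle shows that its unique submodule of length $k$ is $U_{x,k}$ and the corresponding quotient is $U_{x+2k,\,c-k}$ (all first indices read modulo $p-1$, with $U_{x,0}:=0$); in particular $U_{a+2,b}$ has top $S_{a+2b}$, so $U_{a+2,b}\cong P/\rad^{b}P$ for $P:=U_{a+2b,\,p}$.

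Next I would invoke the standard classification of almost split sequences over a self-injective Nakayama algebra (see, for instance, \cite{auslander-reiten}): writing a non-projective indecomposable module as $M\cong P/\rad^{t}P$ with $P$ indecomposable projective and $0<t<\ell\ell(P)$, the almost split sequence ending in $M$ is
\[
0 \;\longrightarrow\; \rad P/\rad^{t+1}P \;\longrightarrow\; \bigl(P/\rad^{t+1}P\bigr)\oplus\bigl(\rad P/\rad^{t}P\bigr) \;\longrightarrow\; P/\rad^{t}P \;\longrightarrow\; 0,
\]
with the conventions $\rad^{t+1}P=0$ when $t+1=\ell\ell(P)$ and $\rad P/\rad^{t}P=0$ when $t=1$. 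Substituting $P=U_{a+2b,\,p}$ and $t=b$ and computing the four terms from the identities above — reducing first indices using $2p\equiv 2$ and $2(p-1)\equiv 0$ modulo $p-1$ — yields $\rad P/\rad^{b+1}P=U_{a,b}$ (so $\tau(U_{a+2,b})\cong U_{a,b}$), $\;P/\rad^{b+1}P=U_{a,b+1}$ (this being the projective $U_{a,p}$ exactly when $b=p-1$), $\;\rad P/\rad^{b}P=U_{a+2,b-1}$ and $\;P/\rad^{b}P=U_{a+2,b}$; that is, precisely the stated sequence. For the closing claim: as $(a,b)$ runs over $\{0,\dots,p-2\}\times\{1,\dots,p-1\}$ the end terms $U_{a+2,b}$ run, without repetition, over all non-projective indecomposable $\mathbb{F}[B]$-modules (Proposition \ref{parameterisation of F[B] modules}), and every non-projective indecomposable module is the end term of exactly one almost split sequence, so the displayed list is complete.

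The only delicate point is that term-by-term computation: one must keep straight the two \emph{off-by-one} conventions at play — Loewy layers versus submodule lengths — together with the index shift $x\mapsto x+2k$ picked up on factoring out a submodule of length $k$, and check that the degenerate cases $b=1$ (simple end term, indecomposable middle term of length $2$) and $b=p-1$ (first middle summand the projective $U_{a,p}$) come out correctly from the same formula without separate treatment. I expect no genuine obstacle beyond this bookkeeping, since the one non-elementary ingredient — the shape of almost split sequences over a Nakayama algebra — is entirely standard. A marginally more self-contained variant would instead write down the non-split sequence directly as the pullback of $U_{a,b+1}\twoheadrightarrow U_{a+2,b}$ along the inclusion $U_{a+2,b-1}\hookrightarrow U_{a+2,b}$, verify $\tau(U_{a+2,b})\cong\Omega^{2}(U_{a+2,b})\cong U_{a,b}$ (legitimate since $\mathbb{F}[B]$ is symmetric, so that $\tau\cong\Omega^{2}$ on the stable module category), and then appeal to the uniqueness of almost split sequences; but the Nakayama classification makes this detour unnecessary.
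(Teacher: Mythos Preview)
Your proposal is correct. The paper itself does not give a proof but merely cites \cite[pp.~190]{benson}; what you have written is essentially the argument one finds there, namely the general shape of almost split sequences over a self-injective Nakayama algebra specialised to $\mathbb{F}[B]$ and translated into the $U_{a,b}$-notation. Your observation that the range should really be $1\le b\le p-1$ (since $U_{a+2,p}$ is projective and $U_{a,p+1}$ is undefined) is also apt; the stated upper bound $b\le p$ in the theorem is a minor slip, and your proof handles the intended range cleanly, including the degenerate endpoints $b=1$ and $b=p-1$.
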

\begin{proof}
    This is given in \cite[pp. 190]{benson}.
\end{proof}
As a Corollary of the above, we obtain the following:
\begin{cor}
    \label{stable a.r. quivers for F[B]}
    Let ${i \in \{0,1\}}$. The Stable A.R. quiver for the block ${\littleb_i}$ of ${\mathbb{F}[B]}$ is:
    \begin{figure}[H]
        \centering
        \begin{figure}[H]
\centering
\resizebox{0.75\textwidth}{!}{%
\begin{circuitikz}
\tikzstyle{every node}=[font=\LARGE]
\node [font=\LARGE] at (6.5,8) {$[S_i]$};
\draw [->, >=Stealth] (6.5,8.5) -- (6.5,10.5);
\node [font=\LARGE] at (6.5,11) {$[U_{i,2}]$};
\node [font=\LARGE] at (6.5,14) {$[U_{i,3}]$};
\node [font=\LARGE] at (7,9) {};
\draw [->, >=Stealth] (6.5,11.5) -- (6.5,13.5);
\draw [->, >=Stealth] (6.5,14.5) -- (6.5,16.5);
\node [font=\LARGE] at (6.5,17) {$...$};
\draw [->, >=Stealth] (6.5,17.5) -- (6.5,19.5);
\node [font=\LARGE] at (6.5,20) {$[U_{i,p-1}]$};
\node [font=\LARGE] at (11.5,8) {$[S_{i+2}]$};
\draw [->, >=Stealth] (11.5,8.5) -- (11.5,10.5);
\node [font=\LARGE] at (11.5,11) {$[U_{i+2,2}]$};
\node [font=\LARGE] at (11.5,14) {$[U_{i+2,3}]$};
\node [font=\LARGE] at (12,9) {};
\draw [->, >=Stealth] (11.5,11.5) -- (11.5,13.5);
\draw [->, >=Stealth] (11.5,14.5) -- (11.5,16.5);
\node [font=\LARGE] at (11.5,17) {$...$};
\draw [->, >=Stealth] (11.5,17.5) -- (11.5,19.5);
\node [font=\LARGE] at (11.5,20) {$[U_{i+2,p-1}]$};
\node [font=\LARGE] at (16.75,9) {};
\node [font=\LARGE] at (16.25,17) {$...$};
\node [font=\LARGE] at (20.75,8) {$[S_{p-3+i}]$};
\draw [->, >=Stealth] (20.5,8.5) -- (20.5,10.5);
\node [font=\LARGE] at (20.5,11) {$[U_{p-3+i,2}]$};
\node [font=\LARGE] at (20.5,14) {$[U_{p-3+i,3}]$};
\node [font=\LARGE] at (21,9) {};
\draw [->, >=Stealth] (20.5,11.5) -- (20.5,13.5);
\draw [->, >=Stealth] (20.5,14.5) -- (20.5,16.5);
\node [font=\LARGE] at (20.5,17) {$...$};
\draw [->, >=Stealth] (20.5,17.5) -- (20.5,19.5);
\node [font=\LARGE] at (20.5,20) {$[U_{p-3+i,p-1}]$};
\draw [->, >=Stealth] (7.5,20) -- (11,17);
\node [font=\LARGE] at (24.75,8) {$[S_i]$};
\node [font=\LARGE] at (24.75,11) {$[U_{i,2}]$};
\node [font=\LARGE] at (24.75,14) {$[U_{i,3}]$};
\node [font=\LARGE] at (25.25,9) {};
\node [font=\LARGE] at (24.75,17) {$...$};
\node [font=\LARGE] at (24.75,20) {$[U_{i,p-1}]$};
\draw [->, >=Stealth] (7,17) -- (10.5,14);
\draw [->, >=Stealth] (7.25,14) -- (10.5,11);
\draw [->, >=Stealth] (7.25,11) -- (10.75,8);
\draw [->, >=Stealth] (12,17) -- (15.5,14);
\draw [->, >=Stealth] (12.5,14) -- (15.25,11);
\draw [->, >=Stealth] (12.5,11) -- (15.25,8);
\node [font=\LARGE] at (16.25,20.25) {$...$};
\node [font=\LARGE] at (16.25,14) {$...$};
\node [font=\LARGE] at (16.25,11) {$...$};
\node [font=\LARGE] at (16.25,8) {$...$};
\draw [->, >=Stealth] (16.75,20.25) -- (20,17);
\draw [->, >=Stealth] (16.75,17) -- (19.25,14);
\draw [->, >=Stealth] (16.75,13.75) -- (19.25,11);
\draw [->, >=Stealth] (16.75,11) -- (19.5,8);
\draw [->, >=Stealth] (21,17) -- (24,14);
\draw [->, >=Stealth] (21.75,14) -- (24,11);
\draw [->, >=Stealth] (21.75,11) -- (24.25,8);

\draw [->, >=Stealth, dashed] (24.75,8.5) -- (24.75,10.5);
\draw [->, >=Stealth, dashed] (24.75,11.5) -- (24.75,13.5);
\draw [->, >=Stealth, dashed] (24.75,14.5) -- (24.75,16.5);
\draw [->, >=Stealth] (22,20) -- (24.25,17);
\draw [->, >=Stealth, dashed] (24.75,17.5) -- (24.75,19.5);
\draw [->, >=Stealth] (12.75,20) -- (15.75,17);
\end{circuitikz}
}%

\end{figure}
        \caption{Stable A.R. quiver of the block ${\littleb_i}$ of ${\mathbb{F}[B]}$}
    \label{fig:quiver-of-b_i}
    \end{figure}
    Furthermore, ${\Omega^2(U_{a,b}) = U_{a-2,b}}$. I.e. ${\Omega^2}$ moves a module one vertex to the left.
\end{cor}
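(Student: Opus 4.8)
\textbf{Proof proposal for Corollary \ref{stable a.r. quivers for F[B]}.}

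The plan is to build the quiver directly from the almost split sequences described in Theorem \ref{almost split exact sequences for F[B]}, together with the standard fact (already cited) that the Stable A.R. quiver of a cyclic block is a finite tube, i.e. it has underlying graph $\mathbb{Z}A_{n}/\langle\tau^{?}\rangle$ for an appropriate $n$ and an appropriate power of the translate. First I would recall that, for the block $\littleb_i$, the non-projective indecomposables are exactly the $U_{a,b}$ with $a \equiv i \pmod 2$ (equivalently $a \in \{i, i+2, \dots, p-3+i\}$ read mod $p-1$) and $1 \le b \le p-1$, and that the almost split sequence terminating in $U_{a+2,b}$ has left term $U_{a,b}$; hence in the quiver there are arrows $[U_{a+2,b}] \to [U_{a,b+1}]$ and $[U_{a+2,b}] \to [U_{a+2,b-1}]$ (reading the middle term as the sum of the targets of the irreducible maps out of $[U_{a+2,b}]$), while the AR-translate is $\tau[U_{a+2,b}] = [U_{a,b}]$. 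The "height" of a module in the tube is its dimension $b$ (so the boundary consists of the simples $S_a = U_{a,1}$, which have no incoming irreducible map from a module of smaller dimension), the rim has $(p-1)/2$ modules at each height (one in each $\tau$-orbit), and the tube has circumference $(p-1)/2$ since applying $\tau$ exactly $(p-1)/2$ times returns $a$ to itself mod $p-1$. Matching this data with the picture in Figure \ref{fig:quiver-of-b_i} — columns indexed by $a \in \{i, i+2, \dots, p-3+i\}$, rows by $b$, vertical arrows raising $b$, diagonal arrows implementing the translate and the descent in $b$, and the leftmost and rightmost columns identified (the dashed arrows) — is then just bookkeeping.

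The one genuinely substantive point, and the one I would treat carefully, is the claim $\Omega^2(U_{a,b}) = U_{a-2,b}$. Here the cleanest route is to use that for a block with cyclic defect the Heller translate $\Omega$ and the AR-translate $\tau$ are related by $\tau \cong \Omega^2$ (on the stable category); this is standard for symmetric algebras of finite representation type — see e.g. the discussion in \cite{benson} or \cite{localrep} — since $\mathbb{F}[B]$ is symmetric and $\tau = \Omega^2 \circ (\text{Nakayama twist})$ with the Nakayama functor trivial here. Combined with $\tau[U_{a+2,b}] = [U_{a,b}]$, i.e. $\tau[U_{a,b}] = [U_{a-2,b}]$, this gives $\Omega^2(U_{a,b}) \cong U_{a-2,b}$ immediately. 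Alternatively — and this is the more self-contained argument I would actually write down — one computes $\Omega(U_{a,b})$ directly: since $U_{a,p} = P_{U_{a,b}}/? $... more precisely, $U_{a,b}$ is the quotient $U_{a,p}/U_{a+2b,\,p-b}$ (the unique submodule of $U_{a,p}$ of dimension $p-b$ has bottom composition factor $S_{a+2b}$), so from $0 \to U_{a+2b,\,p-b} \to P_{S_a} \to U_{a,b} \to 0$ we get $\Omega(U_{a,b}) = U_{a+2b,\,p-b}$; applying this twice, $\Omega^2(U_{a,b}) = \Omega(U_{a+2b,p-b}) = U_{a+2b+2(p-b),\,b} = U_{a+2p,\,b} = U_{a-2,\,b}$, using $\zeta^{p-1}=1$ so that indices of $S$ are read mod $p-1$ and $2p \equiv -2 \pmod{p-1}$.

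The main obstacle I anticipate is purely organisational rather than mathematical: making sure the direction of the arrows, the identification of the two end columns, and the labelling of the translate are all mutually consistent with the convention used when quoting \cite{benson} for Theorem \ref{almost split exact sequences for F[B]} (in particular, whether the displayed sequence $0 \to U_{a,b} \to U_{a,b+1}\oplus U_{a+2,b-1} \to U_{a+2,b} \to 0$ has the "new" module $U_{a+2,b}$ on the right or the left, which fixes whether $\tau$ moves $a$ up or down by $2$). Once that convention is pinned down, the quiver picture and the formula $\Omega^2(U_{a,b}) = U_{a-2,b}$ follow as above, and the finiteness/periodicity ($b$ ranges over $1,\dots,p-1$ and $a$ is periodic mod $p-1$ with period $(p-1)/2$ within a fixed block) is exactly what makes the quiver the finite tube drawn in Figure \ref{fig:quiver-of-b_i}.
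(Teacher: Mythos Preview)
Your overall strategy is exactly the paper's (which gives no explicit argument, treating the quiver as immediate from Theorem~\ref{almost split exact sequences for F[B]}), and your first route to $\Omega^2(U_{a,b})\cong U_{a-2,b}$ via $\tau\cong\Omega^2$ for symmetric algebras is correct and sufficient.

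Your alternative ``self-contained'' computation, however, is wrong in two places. First, $P_{S_a}=U_{a,p}$ is \emph{not} the projective cover of $U_{a,b}$: by Proposition~\ref{parameterisation of F[B] modules} (and the socle computation in the proof of Lemma~\ref{restriction of simple F[G] modules}) the module $U_{a,b}$ has socle $S_a$ and top $S_{a+2(b-1)}$, so its projective cover is $U_{a+2(b-1),p}$, while $U_{a,p}$ is its \emph{injective hull}. The short exact sequence you wrote is really $0\to U_{a,b}\to U_{a,p}\to U_{a+2b,\,p-b}\to 0$, which computes $\Omega^{-1}(U_{a,b})=U_{a+2b,\,p-b}$, not $\Omega$. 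The correct projective-cover sequence is
\[
0\ \longrightarrow\ U_{a+2(b-1),\,p-b}\ \longrightarrow\ U_{a+2(b-1),\,p}\ \longrightarrow\ U_{a,b}\ \longrightarrow\ 0,
\]
giving $\Omega(U_{a,b})=U_{a+2(b-1),\,p-b}$ and hence $\Omega^2(U_{a,b})=U_{a+2p-4,\,b}=U_{a-2,\,b}$ since $2p-4=2(p-1)-2\equiv -2\pmod{p-1}$. Second, your modular arithmetic is off: $2p=2(p-1)+2\equiv 2\pmod{p-1}$, not $-2$; with your (mis-identified) $\Omega$ you would actually obtain $U_{a+2,b}$. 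The two errors do not cancel---you have in effect computed $\Omega^{-2}(U_{a,b})=U_{a+2,b}$ and then misreduced it.

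One smaller slip: in the AR sequence $0\to U_{a,b}\to U_{a,b+1}\oplus U_{a+2,b-1}\to U_{a+2,b}\to 0$ the irreducible maps \emph{out of} the left term $U_{a,b}$ land in the middle summands, so the arrows in the quiver are $[U_{a,b}]\to[U_{a,b+1}]$ and $[U_{a,b}]\to[U_{a+2,b-1}]$, not out of $[U_{a+2,b}]$ as you wrote. You nonetheless identify $\tau$ correctly, so this does not affect the conclusion.
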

\begin{cor}
    \label{F[B] distances between hooks}
    Let ${0\leq a \leq a' \leq p-2}$ such that ${a,a'}$ are either both odd or both even. Then the length of a minimal path from ${S_a}$ to ${S_{a'}}$ on the Stable A.R. quiver is given by ${a'-a}$. The length of a minimal path from ${U_{a,p-1}}$ to ${U_{a',p-1}}$ on the Stable A.R. quiver is also given by ${a'-a}$.
\end{cor}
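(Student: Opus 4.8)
The plan is to read everything off the combinatorial description of the stable A.R.\ quiver $\Gamma^s_{\littleb_i}$ furnished by Corollary~\ref{stable a.r. quivers for F[B]} and Figure~\ref{fig:quiver-of-b_i}. By Theorem~\ref{almost split exact sequences for F[B]} the only almost split sequences in $\littleb_i$ are $0\to U_{a,b}\to U_{a,b+1}\oplus U_{a+2,b-1}\to U_{a+2,b}\to 0$, so every arrow of $\Gamma^s_{\littleb_i}$ is of one of exactly two shapes: a \emph{vertical} arrow $[U_{a,b}]\to[U_{a,b+1}]$ (for $1\le b\le p-2$) or a \emph{diagonal} arrow $[U_{a,b}]\to[U_{a+2,b-1}]$ (for $2\le b\le p-1$); here $S_a=U_{a,1}$ sits on the mouth and $U_{a,p-1}$ on the rim, the would‑be arrow $[U_{a,p-1}]\to[U_{a,p}]$ being absent because $U_{a,p}$ is projective and hence deleted from the stable quiver. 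I would encode a vertex $[U_{a,b}]$ by the pair $(a \bmod (p-1),\,b)$; then a vertical arrow acts by $(a,b)\mapsto(a,b+1)$ and a diagonal arrow by $(a,b)\mapsto(a+2,b-1)$.

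For the lower bound, take any directed path and say it uses $u$ vertical and $d$ diagonal arrows (in any order). Tracking the coordinates, the $b$‑coordinate of the endpoint exceeds that of the start by $u-d$, while the $a$‑coordinate changes by $2d$ modulo $p-1$. A path from $[S_a]$ to $[S_{a'}]$ has $b$‑coordinate $1$ at both ends, and a path from $[U_{a,p-1}]$ to $[U_{a',p-1}]$ has $b$‑coordinate $p-1$ at both ends, so in either case $u=d$; moreover $2d\equiv a'-a\pmod{p-1}$. Since $a$ and $a'$ have the same parity with $0\le a'-a\le p-3$, this congruence forces $d\equiv(a'-a)/2\pmod{(p-1)/2}$, and because $(a'-a)/2\le(p-3)/2<(p-1)/2$ the minimal representative is $d=(a'-a)/2$; hence $d\ge(a'-a)/2$ and the length $u+d=2d$ of the path is at least $a'-a$.

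For the matching upper bound I would exhibit an explicit path of length exactly $a'-a$ staying inside the quiver. From $[S_a]$, alternate vertical and diagonal steps,
$$[S_a]\to[U_{a,2}]\to[S_{a+2}]\to[U_{a+2,2}]\to[S_{a+4}]\to\cdots\to[S_{a'}],$$
which uses $(a'-a)/2$ arrows of each type and never leaves $\{b=1,2\}$; likewise from $[U_{a,p-1}]$,
$$[U_{a,p-1}]\to[U_{a+2,p-2}]\to[U_{a+2,p-1}]\to[U_{a+4,p-2}]\to\cdots\to[U_{a',p-1}],$$
which again has length $a'-a$ and stays in $\{b=p-2,p-1\}$. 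Together with the lower bound this gives minimal length $a'-a$ in both cases.

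The one point that needs genuine care — and the main obstacle — is the wrap‑around of the finite tube: one must verify that a detour going the long way around the $(p-1)/2$ columns cannot be shorter than the direct route. This is precisely what the congruence $d\equiv(a'-a)/2\pmod{(p-1)/2}$ combined with the hypothesis $a'-a\le p-3$ delivers; everything else is routine bookkeeping with the two arrow types.
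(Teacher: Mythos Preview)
Your argument is correct and is essentially the approach the paper intends: the paper does not give a separate proof of this corollary but simply prefaces it with ``From this diagram, we get the following'', treating the claim as an immediate reading of the finite tube in Figure~\ref{fig:quiver-of-b_i}. You have spelled out exactly that reading --- the two arrow types, the coordinate bookkeeping giving the lower bound (including the wrap-around congruence modulo $(p-1)/2$), and the explicit zig-zag paths realising the bound --- so your proof is a faithful and more detailed version of what the paper leaves to inspection.
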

\begin{proof}
    This is clear from figure \ref{fig:quiver-of-b_i}.
\end{proof}

\section{Describing the Green correspondence}
We now have a nice description of the non-projective indecomposable ${\mathbb{F}[G]}$ modules based on walks on the Brauer trees. This description is required for applying results from \cite{bleher-chinburg}. This paper allows us obtain information about the location of a module in the Stable Auslander-Reiten quiver from its corresponding Brauer tree walk. We use the fact that the Green correspondence provides an isomorphism ${\Gamma_{\mathcal{B}_i}^s \cong \Gamma_{\littleb_i}^s}$ of these quivers to explicitly describe the bijection of non-projective indecomposable modules (see \cite[\S \RN{5}.17, Thm 3]{localrep}, \cite[\S \RN{5}.17, cor. 4]{localrep}, the comments following \cite[\S \RN{1}.10, Prop. 1.4]{auslander-reiten}). Given a non-projective indecomposable ${\mathbb{F}[B]}$ module ${U_{a,b}}$, we will denote its Green correspondent by ${V_{a,b}}$.
\begin{defn}
    \label{maximal directed path definition}
    Let ${\Lambda}$ be a cyclic block of a group algebra. A path in ${\Gamma_{\Lambda}^s}$ is called \textit{directed} if it contains no sub-path from ${\Omega^2(X)}$ to $X$ for any non-projective indecompsable module $X$.\\
    A directed path is called \textit{maximal} if it is directed and ends at a boundary.
\end{defn}
\begin{rem}
    \label{maximal directed path remark}
    It's not very difficult to see that in fact, any module $M$ lying on the Stable A.R. quiver of a cyclic block will have exactly two maximally directed paths. Furthermore, since Green correspondence commutes with ${\Omega^2}$, in fact maximally directed paths are sent to maximally directed paths under the Green correspondence.
\end{rem}
\begin{cor}
    \label{maximal directed paths for F[B]}
    Let ${U_{a,b}}$ be a non-projective indecomposable ${\mathbb{F}[B]}$ module. Then there are precisely two maximally directed paths starting at ${U_{a,b}}$. The first is given by
    $$
    {
        [U_{a,b}] \to [U_{a,b+1}] \to ... \to [U_{a,p-1}]
    }
    $$
    with a length of ${p - 1 - b}$. The second is given by
    $$
    {
        [U_{a,b}] \to [U_{a+2,b-1}] \to [U_{a+4,b-2}] \to ... \to [S_{a + 2(b-1)}]
    }
    $$
    with a length of ${b-1}$.
\end{cor}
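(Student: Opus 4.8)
The plan is to read the combinatorics of $\Gamma^s_{\littleb_i}$ directly off Corollary \ref{stable a.r. quivers for F[B]} (equivalently Figure \ref{fig:quiver-of-b_i}) together with Theorem \ref{almost split exact sequences for F[B]}, and then unwind Definition \ref{maximal directed path definition}. The almost split sequence $0\to U_{a,b}\to U_{a,b+1}\oplus U_{a+2,b-1}\to U_{a+2,b}\to 0$ shows that the arrows leaving $[U_{a,b}]$ in the stable quiver are exactly the \emph{vertical} arrow $[U_{a,b}]\to[U_{a,b+1}]$, present precisely when $b\le p-2$ (as $U_{a,p}$ is projective, hence removed from $\Gamma^s_{\littleb_i}$), and the \emph{diagonal} arrow $[U_{a,b}]\to[U_{a+2,b-1}]$, present precisely when $b\ge 2$. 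Using $\Omega^2(U_{a,b})=U_{a-2,b}$, the mesh ending at $[U_{a,b}]$ is then the pair of length-$2$ composites from $[\Omega^2(U_{a,b})]=[U_{a-2,b}]$ to $[U_{a,b}]$, namely ``vertical then diagonal'' (through $[U_{a-2,b+1}]$) and ``diagonal then vertical'' (through $[U_{a,b-1}]$).

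First I would establish a local dichotomy: a path in $\Gamma^s_{\littleb_i}$ of length at most $p-2$ is directed if and only if all of its arrows are of the same type (all vertical, or all diagonal). For ``monotone $\Rightarrow$ directed'', note that a sub-path from $\Omega^2(X)$ to $X$ inside a path of length $\le p-2$ must have length exactly $2$: a short coordinate count (vertical arrows change $(a\text{-index},b)$ by $(0,+1)$, diagonal arrows by $(+2,-1)$, and the first index is read modulo $p-1$) shows that any path from $\Omega^2(X)$ to $X$ in this tube has length $2$ or length $\ge p+1$, the latter requiring a full wrap around the tube. By the first paragraph such a length-$2$ sub-path is one of the two mixed composites, which a monotone path does not contain. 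The converse direction is the same fact read contrapositively: a vertical arrow immediately followed by a diagonal one (or vice versa) is, by the mesh description above, precisely a forbidden sub-path $[U_{a-2,b}]\to\ast\to[U_{a,b}]$ for a suitable $U_{a,b}$.

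Then I would enumerate the maximal monotone paths out of $[U_{a,b}]$. An all-vertical path is forced up column $a$ and terminates at $[U_{a,p-1}]$, which has no vertical arrow leaving it and is a boundary vertex of the tube, giving $[U_{a,b}]\to[U_{a,b+1}]\to\dots\to[U_{a,p-1}]$ of length $p-1-b$. An all-diagonal path lowers the second index by one at each step, so after $b-1$ steps it reaches $[U_{a+2(b-1),1}]=[S_{a+2(b-1)}]$, again a boundary vertex with no diagonal arrow leaving it, giving length $b-1$. Any directed path starting at $[U_{a,b}]$ is monotone by the dichotomy, hence an initial segment of one of these two paths; a proper initial segment ends at an interior vertex and so is not maximal, so the only maximally directed paths starting at $[U_{a,b}]$ are these two. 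The boundary cases $b=1$ and $b=p-1$ are consistent with this: one of the two paths then has length $0$, which is still maximal because $[U_{a,b}]$ itself lies on a boundary, and the formulae $p-1-b$ and $b-1$ continue to hold.

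The one point I expect to need care — more a bookkeeping nuisance than a genuine obstacle — is the length argument ruling out a ``long'' sub-path from $\Omega^2(X)$ to $X$ that winds around the tube; this is settled purely by the bound that our two candidate paths have length $\le p-2<p+1$. Once that is in place, every remaining step is a direct transcription of Figure \ref{fig:quiver-of-b_i}.
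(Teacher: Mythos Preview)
Your argument is correct. The paper itself gives no proof of this corollary at all: it is stated as an immediate read-off from Figure~\ref{fig:quiver-of-b_i} and Remark~\ref{maximal directed path remark} (which already asserts that every vertex of a cyclic-block stable quiver has exactly two maximally directed paths). Your proposal supplies the combinatorial verification the paper leaves implicit.

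One small presentational point: you phrase the dichotomy as ``a path of length at most $p-2$ is directed iff it is monotone'', and then later invoke it to conclude ``any directed path is monotone''. That step as written would need the length bound in advance. In fact your own argument for the converse direction (a mixed consecutive pair is already a forbidden $\Omega^2(X)\to X$ sub-path) works for paths of \emph{any} length, so ``directed $\Rightarrow$ monotone'' holds unconditionally; the length bound $\le p-2$ is only needed for ``monotone $\Rightarrow$ directed'' and is automatic once you observe that the two monotone paths out of $[U_{a,b}]$ have lengths $p-1-b$ and $b-1$. Reordering the logic this way removes the apparent circularity, but the underlying mathematics is exactly what you wrote.
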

Let $M$ be a non-projective indecomposable module in a cyclic block. By the \textit{boundary modules} of $M$, we mean the two modules that the two maximal directed paths starting at $M$ end at. For example, we have seen in Corollary \ref{maximal directed paths for F[B]} that given a ${U_{a,b}}$, the two boundary modules are ${U_{a,p-1}}$ and ${S_{a + 2(b-1)}}$.
\begin{defn}
    \label{hook definition}
    Let ${H}$ be a non-projective uniserial module. Let ${W = \rad(H)}$ and ${S = \Top(H)}$. Then we call $H$ a \textit{hook} if ${\rad(P_S) / \soc(P_S) \cong W\oplus Q}$ for some uniserial module $Q$.
    \\
    Let $C$ also be a non-projective uniserial module. Let ${R = \soc(C)}$ and let ${X = C/R}$. Then $C$ is called a \textit{cohook} if ${\rad(P_R) / \soc(P_R) \cong X\oplus Q'}$ for some uniserial module ${Q'}$.
\end{defn}
\begin{rem}
    \label{hook cohook remark}
    As given in \cite[\S \RN{2}.3, pp. 114]{bleher-chinburg}, for Brauer tree algebras every hook is a cohook and vice versa. So for our purposes, we will use these notions interchangeably. It is also explained that given a Brauer tree algebra ${\Lambda}$ with $e$ isomorphism classes of simple modules, then there are ${2e}$ cohooks. Finally, cohooks are precisely the modules that lie on the boundaries of ${\Gamma^s_{\Lambda}}$. Each boundary consists of $e$ cohooks.
\end{rem}
\begin{lem}
    \label{hooks of F[G]}
    The two blocks ${\mathcal{B}_0}$ and ${\mathcal{B}_1}$ of ${\mathbb{F}[G]}$ have ${p-1}$ cohooks each. They are given by the non-simple non-projective uniserial modules as in Corollary \ref{uniserial F[G] modules}, as well as the two simple modules ${V_1}$ and ${V_{p-1}}$.
\end{lem}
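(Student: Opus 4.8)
The plan is to read off the answer by lining up three facts that are already in place: the cohook count for a Brauer tree algebra (Remark \ref{hook cohook remark}), the structure of the projective indecomposables (Proposition \ref{projective indecomposable F[G] modules}), and the list of uniserial $\mathbb{F}[G]$-modules (Corollaries \ref{uniserial F[G] modules} and \ref{all non-simple non-projective indecomposable F[G] modules have Loewy length 2}). First, each Brauer tree in Figure \ref{fig:blocks-of-G} has $e = (p-1)/2$ edges, so by Remark \ref{hook cohook remark} each of $\mathcal{B}_0$ and $\mathcal{B}_1$ has exactly $2e = p-1$ cohooks, and these are non-projective uniserial modules lying on the boundaries of $\Gamma^s_{\mathcal{B}_i}$. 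By Corollary \ref{all non-simple non-projective indecomposable F[G] modules have Loewy length 2} every non-simple non-projective indecomposable $\mathbb{F}[G]$-module has Loewy length $2$, so the non-projective uniserial $\mathbb{F}[G]$-modules are precisely the $2(p-2)$ length-$2$ modules listed in Corollary \ref{uniserial F[G] modules} together with the non-projective simple modules $V_1, \dots, V_{p-1}$. It therefore suffices to test the cohook condition of Definition \ref{hook definition} on these, and then to verify that exactly $p-1$ of them land in each block.

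Next I would treat the length-$2$ case. Let $C$ be one of the modules of Corollary \ref{uniserial F[G] modules}, with $R := \soc(C) = V_r$ and $X := C/R$ (a simple module). Proposition \ref{projective indecomposable F[G] modules} gives $\rad(P_R)/\soc(P_R) \cong V_{p+1-r} \oplus V_{p-1-r}$ when $1 < r < p$ (interpreting $V_0 = 0$, which is the case $r = p-1$), and $\rad(P_{V_1})/\soc(P_{V_1}) \cong V_{p-2}$. Comparing this with the possible values of the top $X$ recorded in Corollary \ref{uniserial F[G] modules}, in every case $X$ is one of the displayed direct summands of $\rad(P_R)/\soc(P_R)$, and the complementary summand is a single simple module or zero, hence uniserial. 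So the cohook condition holds, and all $2(p-2)$ non-simple non-projective uniserial $\mathbb{F}[G]$-modules are cohooks, distributed $p-2$ to each block by the count carried out in the proof of Corollary \ref{uniserial F[G] modules}.

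Finally I would treat the simple modules. For $C = V_t$ we have $R = V_t$ and $X = C/R = 0$, so the cohook condition reduces to the requirement that $\rad(P_{V_t})/\soc(P_{V_t})$ be uniserial. By Proposition \ref{projective indecomposable F[G] modules} this quotient equals $V_{p-2}$ for $t = 1$ and $V_2$ for $t = p-1$, which are simple hence uniserial, so $V_1$ and $V_{p-1}$ are cohooks; for $1 < t < p-1$ it equals $V_{p+1-t} \oplus V_{p-1-t}$, a direct sum of two non-isomorphic nonzero simple modules and hence not uniserial (its socle is not simple), so $V_t$ is not a cohook; and $V_p$ is projective, so excluded. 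Since $V_1 \in \mathcal{B}_0$ and $V_{p-1} \in \mathcal{B}_1$, this exhibits $(p-2) + 1 = p-1$ pairwise non-isomorphic cohooks in each block, which by the first paragraph must be all of them. The one point that needs a little care is the behaviour of $P_{V_r}$ at the two ends of the Brauer tree — the cases $r \in \{1, p-1\}$, where the heart $\rad(P_{V_r})/\soc(P_{V_r})$ degenerates to a single simple module (so $P_{V_r}$ is in fact uniserial) — together with the attendant bookkeeping that the $2(p-2)$ length-$2$ uniserials split evenly over the two blocks while the two extra simple cohooks $V_1, V_{p-1}$ fall one in each.
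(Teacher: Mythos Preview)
Your proposal is correct and follows essentially the same approach as the paper: verify the cohook condition of Definition \ref{hook definition} case by case using the description of $\rad(P_{V_r})/\soc(P_{V_r})$ from Proposition \ref{projective indecomposable F[G] modules}, and then invoke the count $2e = p-1$ from Remark \ref{hook cohook remark} to conclude that the listed modules exhaust the cohooks in each block. The only differences are organisational: you additionally argue explicitly that the simple modules $V_t$ with $1 < t < p-1$ fail the cohook test (the paper omits this, relying solely on the count), and you invoke Corollary \ref{all non-simple non-projective indecomposable F[G] modules have Loewy length 2} to delimit the pool of candidate uniserial modules up front, whereas the paper simply runs through the list from Corollary \ref{uniserial F[G] modules} directly.
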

\begin{proof}
    We run through the modules given in Corollary \ref{uniserial F[G] modules}. First, let $C$ be the uniserial module with socle ${V_1}$ and top ${V_{p-2} = C/V_{1}}$ (the latter equality coming from the fact the non-simple uniserials have length 2). From the Brauer tree, ${\rad(P_{V_1})/\soc(P_{V_1}) \cong 0 \oplus V_{p-2}}$, and thus $C$ is a cohook. It also follows from this calculation that the simple module ${V_1}$ is a cohook.\\
    \\
    Next, let $C$ be the uniserial module with socle ${V_{p-1}}$ and top ${V_2 = C/V_{p-1}}$. From the Brauer tree, \\${\rad(P_{V_{p-1}})/\soc(P_{V_{p-1}}) \cong 0 \oplus V_2}$, and thus $C$ is a cohook. It also follows from this calculation that the simple module ${V_{p-1}}$ is a cohook.\\
    \\
    For ${1 < i < p-1}$, let ${C}$ be the uniserial module with socle ${V_i}$ and top ${V_{p+1-i} = C/V_i}$. Then from the Brauer tree ${\rad(P_{V_i})/\soc(P_{V_i}) \cong P_{V_{p+1-i}} \oplus P_{V_{p-1-i}}}$, and thus $C$ is a cohook. This also proves the uniserial module with socle ${V_i}$ and top ${V_{p-1-i}}$ is a cohook.\\
    \\
    A simple count shows that we thus have found ${p-1}$ cohooks belonging to each block, which must then be all of them by remark \ref{hook cohook remark}.
\end{proof}
Now that we understand what the hooks of ${\mathbb{F}[G]}$ look like, we want to show how they split to form the boundaries of ${\Gamma_{\mathcal{B}_i}^s}$. We also want to give a method of computing the minimal distance between two hooks on the same boundary. For this, we recall \cite[\S \RN{3}, Prop. 3.7]{bleher-chinburg}. Note that by the clockwise walk around the Brauer tree, we mean a walk which, at each stage, takes the next edge clockwise around our current vertex to the one we just took.
\begin{prop}
    \label{distance between hooks on the same boundary}
    Let ${\Lambda}$ be a Brauer tree algebra, with $e$ isomorphism classes of simple modules. Let $m$ denote the multiplicity of the exceptional vertex. If ${e = 1}$ and ${m > 1}$, then there is exactly one vertex in each of the two boundaries of ${\Gamma_{\Lambda}^s}$. Otherwise, let $H$ be a hook. Let ${W_H = (x_1, X_1, x_2, X_2, ..., X_{2e},x_{2e+1})}$ be the unique clockwise walk on the Brauer tree with ${X_1 = \Top(H)}$ and ${X_2 = \soc(H)}$. Then a hook ${H'}$ belongs to the same boundary as $H$ if there exists a ${1 \leq j\leq e}$ with ${\Top(H') = X_{2j-1}}$ and ${\soc(H') = X_{2j}}$. The length of a minimal path from ${H}$ to ${H'}$ in ${\Gamma^s_{\Lambda}}$ is given by ${2(j-1)}$. The length of a minimal path from ${H'}$ to $H$ is given by ${2e - 2(j-1)}$.
\end{prop}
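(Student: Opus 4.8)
\emph{Proof proposal.} The plan is to reduce the statement to two ingredients: an explicit description of how $\Omega^{2}$ acts on cohooks in terms of the clockwise walk on the Brauer tree, and a direct reading of minimal path lengths at the mouth of the tube $\Gamma^{s}_{\Lambda}$. First I would dispose of the degenerate case: if $e = 1$ and $m > 1$ then $\Lambda$ is a symmetric Nakayama algebra with a unique simple module, $\Gamma^{s}_{\Lambda}$ is a tube of rank $1$ whose two boundaries are single modules, and there is nothing more to prove. So assume otherwise. Recall (Remark~\ref{hook cohook remark}) that $\Gamma^{s}_{\Lambda}$ is a finite tube whose two boundary components each consist of exactly $e$ cohooks, and that, since $\Lambda$ is symmetric, $\tau \cong \Omega^{2}$ on $\underline{\mathrm{mod}}\,\Lambda$; in particular each boundary is a single $\Omega^{2}$-orbit.

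The core step is to pin down that orbit. Given a cohook $H$, its projective cover is $P_{\Top(H)}$, so $\Omega(H) = \ker\bigl(P_{\Top(H)} \twoheadrightarrow H\bigr)$. Using the Brauer-tree description of $P_{\Top(H)}$ — its heart $\rad(P_{\Top(H)})/\soc(P_{\Top(H)})$ is a direct sum of two uniserial arms, obtained by reading edges clockwise around the two endpoints of the edge $\Top(H)$, one of these arms being $\rad(H)$ with $\soc(H)$ at its top — one checks that $\Omega(H)$ is again a cohook and that, in terms of the clockwise walk $W_{H} = (x_{1}, X_{1}, x_{2}, X_{2}, \ldots)$ with $X_{1} = \Top(H)$ and $X_{2} = \soc(H)$, passing to $\Omega(H)$ advances this walk by one step. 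Iterating, $\Omega^{2}(H)$ is the cohook with top $X_{3}$ and socle $X_{4}$, once the walk orientation is fixed so that $\Omega^{2}$ moves along the boundary in the direction the walk advances. Since the clockwise walk around a tree with $e$ edges is a closed walk of length $2e$ (each edge traversed once in each direction), $e$ applications of $\Omega^{2}$ return to $H$; hence the cohooks $H_{j}$ with $\Top(H_{j}) = X_{2j-1}$ and $\soc(H_{j}) = X_{2j}$ for $1 \le j \le e$ are pairwise non-isomorphic and form precisely the boundary of $\Gamma^{s}_{\Lambda}$ containing $H$, while the other $e$ cohooks form the other boundary. (Alternatively, one can run this syzygy computation through the Janusz parameterisation of Theorem~\ref{module W}.) This yields the stated criterion for $H'$ to lie on the same boundary as $H$.

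For the distances I would work at the mouth of the tube. A mouth vertex has a unique arrow leaving it, and chasing the almost-split sequences shows that a minimal path from a mouth vertex $M$ to $\tau^{-1}M$ has length exactly $2$; concatenating, a minimal path from $M$ to $\tau^{-k}M$ has length $2k$ for $0 \le k \le e$, and going round the mouth the other way costs $2e - 2k$ (using that $\tau^{e} = \mathrm{id}$ on the boundary, i.e.\ that the walk closes after $2e$ steps). Since the previous step gives $H_{j} = \Omega^{-2(j-1)}(H) = \tau^{-(j-1)}(H)$, a minimal path from $H = H_{1}$ to $H' = H_{j}$ has length $2(j-1)$ and a minimal path from $H'$ back to $H$ has length $2e - 2(j-1)$, as required.

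The main obstacle is the bookkeeping in the second paragraph: computing $\Omega$ of a cohook from the Brauer-tree shape of its projective cover and matching it correctly to the \emph{clockwise} walk — getting every orientation convention consistent (clockwise versus counterclockwise, and which of $\Top(H)$, $\soc(H)$ begins the walk) so that $\Omega^{2}$ advances the walk in the same direction $\tau$ moves along the boundary, and verifying that the exceptional vertex of multiplicity $m$ affects only the height of the tube, not the length $2e$ of the walk or the number $e$ of cohooks on each boundary. This is essentially Green's analysis of walks around the Brauer tree; granted it, the distance count and the degenerate case are routine.
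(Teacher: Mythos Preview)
The paper does not actually prove this proposition: it is introduced with ``For this, we recall \cite[\S \RN{3}, Prop.~3.7]{bleher-chinburg}'' and stated without a proof environment. So there is nothing to compare against beyond the citation; your sketch is a genuine proof where the paper simply imports the result. Your strategy---identify the boundary as a single $\tau$-orbit, compute $\Omega$ on cohooks via Green's walk around the Brauer tree, and read distances at the mouth of the tube---is exactly the standard argument behind the Bleher--Chinburg statement, so in substance you are reconstructing the cited proof rather than taking a different route.

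One internal inconsistency is worth fixing. In the second paragraph you assert that $\Omega^{2}(H)$ has top $X_{3}$ and socle $X_{4}$, i.e.\ $\Omega^{2}(H)=H_{2}$, which gives $H_{j}=\Omega^{2(j-1)}(H)=\tau^{\,j-1}(H)$. In the third paragraph you instead write $H_{j}=\Omega^{-2(j-1)}(H)=\tau^{-(j-1)}(H)$. Only one of these is compatible with the conclusion that the minimal directed path from $H$ to $H_{j}$ has length $2(j-1)$: in a tube the almost split sequence $0\to\tau M\to E\to M\to 0$ shows arrows run from $\tau M$ towards $M$, so a minimal path of length $2k$ goes from $M$ to $\tau^{-k}M$. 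Thus you need $H_{j}=\tau^{-(j-1)}(H)$, which forces $\Omega^{-2}$ (not $\Omega^{2}$) to advance the clockwise walk by two steps. You flag exactly this orientation bookkeeping as the main obstacle, but as written the two paragraphs disagree; pick the convention consistently (the correct one is that $\Omega^{-1}$, i.e.\ cosyzygy, advances Green's clockwise walk by one edge) and the argument goes through.
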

\begin{cor}
    \label{boundaries of Gamma_s(B_i)}
    Let ${i \in \{0,1\}}$. Then one boundary of ${\Gamma_{\mathcal{B}_i}^s}$ consists of all hooks with dimension ${\equiv 1\Modwb{p}}$, while the other boundary consists of all hooks with dimension ${\equiv p-1\Modwb{p}}$. Furthermore, we have the following:
    \begin{itemize}
        \item A non-simple hook belonging to ${\mathcal{B}_0}$, which is of the form ${M(0,l,2,\epsilon)}$, belongs to the same boundary as ${V_1}$ if and only if ${l}$ is odd.
        \item A non-simple hook belonging to ${\mathcal{B}_1}$, which is of the form ${M(1,l,2,\epsilon)}$, belongs to the same boundary as ${V_{p-1}}$ if and only if ${l}$ is odd.
        \item The minimal distance from ${V_1}$ (resp. ${V_{p-1}}$) to ${M(0,l,2,\epsilon)}$ (resp. ${M(1,l,2,\epsilon)}$) with ${l}$ odd is ${l+1}$ if ${\epsilon = 1}$, and ${p-2-l}$ if ${\epsilon = -1}$.
    \end{itemize}
\end{cor}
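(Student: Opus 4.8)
The plan is to reduce the statement to Proposition \ref{distance between hooks on the same boundary}, applied with the reference hook $H$ taken to be the simple cohook of the block: $V_1$ for $\mathcal{B}_0$ and $V_{p-1}$ for $\mathcal{B}_1$ (these are cohooks by Lemma \ref{hooks of F[G]}). Since the Brauer tree of $\mathcal{B}_0$ in Figure \ref{fig:blocks-of-G} is a path, I name its edges $F_1,\dots,F_{(p-1)/2}$ from the non-exceptional leaf inward, so that $F_j = V_j$ for $j$ odd and $F_j = V_{p-j}$ for $j$ even, with $F_1 = V_1$ at the leaf and $F_{(p-1)/2}$ at the exceptional vertex (of multiplicity $m = 2$). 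As every vertex has degree at most $2$, the clockwise walk $W_{V_1}$ is forced: it bounces once at the non-exceptional leaf, runs outward $F_1, F_2, \dots, F_{(p-1)/2}$, bounces once at the exceptional leaf, and runs back $F_{(p-1)/2}, \dots, F_2$, traversing each edge twice and thus having the required length $2e$, where $e = (p-1)/2$. Reading off the consecutive pairs $(X_{2j-1}, X_{2j})$ for $j = 1,\dots,e$ lists the cohooks on the boundary of $V_1$ together with the minimal path length $2(j-1)$ from $V_1$ to each: these are $V_1$ itself ($j = 1$), the length-$2$ uniserials with $(\Top,\soc)$ equal to an adjacent pair $(F_k, F_{k+1})$ with $k$ even (on the outward leg) or to $(F_{k+1}, F_k)$ with $k$ even (on the return leg), and — exactly when $(p-1)/2$ is even — the self-reverse loop $M(V_{(p+1)/2}, V_{(p+1)/2}, 2)$. (For $p = 3$ one has $e = 1$ and uses instead the $e = 1$ clause of Proposition \ref{distance between hooks on the same boundary}; the conclusions below are then immediate or vacuous.)

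From this the first assertion is a dimension count. With $\dim V_t = t$: an adjacent pair $F_k, F_{k+1}$ with $k$ even has dimensions summing to $p+1$, the loop (when present) has dimension $p+1$, and $V_1$ has dimension $1$; hence every cohook on the boundary of $V_1$ has dimension $\equiv 1 \pmod{p}$. On the other hand, Corollary \ref{uniserial F[G] modules} and Proposition \ref{projective indecomposable F[G] modules} show that every non-simple cohook of $\mathcal{B}_0$ has dimension $p-1$ or $p+1$, and a direct count gives exactly $(p-1)/2 = e$ cohooks of $\mathcal{B}_0$ of dimension $\equiv 1 \pmod{p}$; since each boundary carries exactly $e$ cohooks (Remark \ref{hook cohook remark}), the boundary of $V_1$ consists precisely of these, and the other boundary consists precisely of the cohooks of dimension $\equiv p-1 \pmod{p}$. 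The same argument in $\mathcal{B}_1$, using its edges $G_j$ ($G_j = V_{p-j}$ for $j$ odd, $G_j = V_j$ for $j$ even) and the reference hook $V_{p-1}$ of dimension $p-1$, yields the identical dichotomy.

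For the remaining assertions I unwind the notation $M(0,l,2,\epsilon)$ via Definition \ref{parameterisation of the non-projective indecomposable F[G] modules}, (\ref{M_i definition}) and (\ref{epsilon definition}): it is the length-$2$ uniserial given by the walk starting at the $(l+1)$st vertex that traverses the edges $F_{l+1}$ and $F_{l+2}$ (when $l = (p-3)/2$ the walk bounces at the exceptional vertex, so $F_{l+2}$ is read as $F_{(p-1)/2}$ and the module is the loop), with $(\Top,\soc) = (F_{l+1}, F_{l+2})$ when $\epsilon = 1$ and $(\Top,\soc) = (F_{l+2}, F_{l+1})$ when $\epsilon = -1$. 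Comparing with the list obtained from $W_{V_1}$: the cohook built from an adjacent pair $F_k, F_{k+1}$ lies on the boundary of $V_1$ if and only if $k$ is even, and then $(\Top,\soc) = (F_k, F_{k+1})$ occurs at distance $k$ from $V_1$, while $(\Top,\soc) = (F_{k+1}, F_k)$ occurs at distance $2e - k = p-1-k$. Since $l+1$ is the smaller of $l+1, l+2$, it follows that $M(0,l,2,\epsilon)$ lies on the boundary of $V_1$ if and only if $l+1$ is even, i.e. $l$ is odd; and for $l$ odd the minimal distance from $V_1$ is $l+1$ if $\epsilon = 1$ and $p-2-l$ if $\epsilon = -1$ (in the loop case $l = (p-3)/2$, which lies on this boundary only when $(p-3)/2$ is odd, one has $l+1 = p-2-l$, so both formulae agree). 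The identical computation in $\mathcal{B}_1$ with $V_{p-1}$ and the edges $G_j$ gives the statements for $M(1,l,2,\epsilon)$.

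The main obstacle is the opening step — correctly determining the clockwise walk $W_{V_1}$ on the straight-line Brauer tree, in particular handling the two extremal vertices (the ordinary leaf, where the walk bounces once, and the exceptional leaf of multiplicity $2$, which also contributes a single bounce and, according to the parity of $(p-1)/2$, the loop cohook) — and then matching the top/socle data read off from $W_{V_1}$ against the $\epsilon$-convention built into Definition \ref{parameterisation of the non-projective indecomposable F[G] modules}. Once the walk and this dictionary are in place, the dimension count and the two parity/distance verifications are routine.
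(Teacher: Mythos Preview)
Your proof is correct and follows essentially the same approach as the paper: both apply Proposition~\ref{distance between hooks on the same boundary} with the reference hook $H=V_1$ (resp.\ $V_{p-1}$), trace the clockwise walk $W_H$ along the straight-line Brauer tree, read off the consecutive pairs $(X_{2j-1},X_{2j})$ to identify the cohooks on that boundary and their distances from $H$, and then match these against the $M(i,l,2,\epsilon)$ notation. Your presentation is somewhat more systematic (labelling the edges $F_j$ and giving an explicit dimension/counting argument for the first assertion), but the underlying argument is the same.
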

\begin{proof}
    Once again, if ${p = 3}$ so that ${e = (p-1)/2 = 1}$, then the statements become vacuously true. Let's suppose then that ${p > 3}$ so that ${e > 1}$. We will prove this result only for ${\mathcal{B}_0}$ as the proof is the same for ${\mathcal{B}_1}$. It's easy to see from Lemma \ref{hooks of F[G]} that all hooks of ${\mathcal{B}_i}$ have dimension ${\equiv 1\Modwb{p}}$ or ${\equiv p-1\Modwb{p}}$. Furthermore, from the description of the edges on the Brauer tree, it is not difficult to also see that the non-simple hooks with dimension ${\equiv 1\Modwb{p}}$ are of the form ${M(0,l,2,\epsilon)}$ for some odd ${l}$. If we apply Proposition \ref{distance between hooks on the same boundary} with ${H = V_1}$, then the walk ${W_H}$ can be described as follows (the reader may want to do an example of this to clarify this description):
    \begin{itemize}
        \item ${W_H}$ starts at the second vertex. It takes the ${V_1}$ edge to the first vertex, then the ${V_1}$ edge back to the second vertex.
        \item ${W_H}$ takes the next two edges to the right. This tells us that the hook ${M(0,1,2,1)}$ belongs to the same boundary as ${V_1}$, and the distance from ${V_1}$ to it is ${2}$.
        \item Once again, ${W_H}$ takes the next two edges to the right. This tells us that the hook ${M(0,3,2,1)}$ belongs to the same boundary as ${V_1}$, and the distance from ${V_1}$ to this hook is ${4}$.
        \item This pattern continues, adding ${2}$ to ${l}$ each time, all the way up until we meet the exceptional vertex and start walking to the left. If ${(p-1)/2}$ is odd, the last hook we get before turning around is ${M(0,(p-5)/2,2,1)}$. If ${(p-1)/2}$ is even, we actually turn around ``mid-hook": \\${M(0,(p-3)/2,2,1) = M(0,(p-3)/2,2,-1)}$.
        \item Now the walk ${W_H}$ begins walking back the way we came, accumulating the hooks with the same ${l}$ values in reverse order and with ${\epsilon = -1}$. For example, if ${(p-1)/2}$ is odd, the next hook on this walk we get is ${M(0,(p-5)/2 - 2,2,-1)}$. Then ${M(0,(p-5)/2 - 4,2,-1)}$. All the way until ${M(0,1,2,-1)}$.
        \item To see the distance from ${V_1}$ to the ${\epsilon=-1}$ hooks is given by ${p-2-l}$, note first of all that the distance from ${V_1}$ to ${M(0,1,2,-1)}$ is ${p-3 = p-2-1}$. This is the hook that has the maximal distance from ${V_1}$ to it, as it is the last hook that shows up in the ${W_H}$ walk.
        \item By playing ${W_H}$ in reverse, we see that ${M(0,3,1,-1)}$ has distance ${p-3-2 = p-2-3}$ from ${V_1}$ to it. ${M(0,5,1,-1)}$ has distance ${p-3-4 = p-2-5}$. This pattern clearly continues, and in general ${M(0,l,1,-1)}$ has distance ${p-3-(l-1) = p-2-l}$ from ${V_1}$ to it.
    \end{itemize}
\end{proof}
We can now describe some of the Green correspondence.
\begin{lem}
    \label{restriction of simple F[G] modules}
    For ${1\leq i\leq p}$ we have
    $$
    {
        \Res^G_B(V_i) = U_{p-i,i}.
    }
    $$
\end{lem}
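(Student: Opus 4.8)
The plan is to pin $\Res^G_B(V_i)$ down to one of the modules $U_{a,b}$ of Proposition~\ref{parameterisation of F[B] modules} by first showing it is indecomposable of dimension $i$ (forcing $b=i$), and then reading off $a$ from the socle.

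First I would restrict one step further, to the cyclic $p$-group $U=\angel{g}$, and use the action formula of Proposition~\ref{simple F[G] modules}. Taking $g=\begin{pmatrix}1&1\\0&1\end{pmatrix}$ (the choice of generator is immaterial), on the basis $u_j:=x^{j}y^{\,i-1-j}$, $0\le j\le i-1$, of $V_i$ one computes
\[
u_j\cdot g=(x+y)^{j}y^{\,i-1-j}=\sum_{k=0}^{j}\binom{j}{k}u_k,
\qquad\text{so}\qquad
(g-1)u_j=\sum_{k=0}^{j-1}\binom{j}{k}u_k .
\]
Thus $g-1$ strictly lowers the top index occurring, so $(g-1)^{i}=0$ on $V_i$, while the $u_{j-1}$-coefficient of $(g-1)u_j$ is $\binom{j}{j-1}=j\neq 0$ in $\mathbb{F}$ (as $1\le j\le i-1\le p-2$), whence $(g-1)^{i-1}u_{i-1}=(i-1)!\,u_0\neq 0$. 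Hence, as an $\mathbb{F}[U]$-module, $V_i$ is a single Jordan block, i.e. the indecomposable $\mathbb{F}[U]$-module of dimension $i$; in particular $\Res^G_B(V_i)$ is indecomposable, and since $\dim V_i=i\le p-1$ Proposition~\ref{parameterisation of F[B] modules} forces $\Res^G_B(V_i)\cong U_{a,i}$ for a unique $a$ (and this module is non-projective).

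Next I would identify $a$, which is exactly the label of $\soc(U_{a,i})=S_a$. Every simple $\mathbb{F}[B]$-module is some $S_c$, on which $U$ acts trivially, so the $\mathbb{F}[B]$-socle of $V_i$ is contained in the fixed space $V_i^{\,U}$; by the previous step $V_i^{\,U}=\angel{u_0}=\angel{y^{\,i-1}}$ is one-dimensional, and it is a $B$-submodule because $U\trianglelefteq B$. Hence $\soc\bigl(\Res^G_B(V_i)\bigr)=\angel{y^{\,i-1}}$, and on this line $\lambda=\begin{pmatrix}\zeta&0\\0&\zeta^{-1}\end{pmatrix}$ acts by $y^{\,i-1}\cdot\lambda=(\zeta^{-1}y)^{\,i-1}=\zeta^{-(i-1)}y^{\,i-1}$. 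Comparing with the definition of $S_a$ and reading indices modulo $p-1$, we get $a\equiv -(i-1)\equiv p-i\pmod{p-1}$, and therefore $\Res^G_B(V_i)\cong U_{p-i,\,i}$.

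I do not expect a genuine obstacle here; the only thing to be careful about is the bookkeeping of conventions — that the $SL_2(\mathbb{F}_p)$-action on $\mathbb{F}[x,y]$ is a \emph{right} action, which specific unipotent and torus elements are fixed as the generators $g,\lambda$, and that the index of $S_a$ is only well-defined modulo $p-1$ (so the naive answer $-(i-1)$ must be rewritten as $p-i$). As a sanity check, $i=1$ recovers the trivial module with $U_{p-1,1}=U_{0,1}=S_0$, and the same computation in fact extends to $i=p$, returning $\Res^G_B(V_p)=U_{0,p}$, the projective cover of $S_0$, consistent with $V_p$ being simple and projective.
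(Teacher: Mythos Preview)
Your proof is correct and follows essentially the same route as the paper: identify the $B$-socle of $V_i$ as the line $\langle y^{i-1}\rangle$, compute the action of the diagonal generator $\lambda$ on it to read off the parameter $a=p-i$, and use Proposition~\ref{parameterisation of F[B] modules} together with $\dim V_i=i$ to conclude $\Res^G_B(V_i)\cong U_{p-i,i}$. The only difference is cosmetic: the paper outsources the fact that the $B$-socle is $\langle y^{i-1}\rangle$ to a reference, whereas you prove it directly by exhibiting the single Jordan block for the $U$-action (which simultaneously yields indecomposability and the $U$-fixed line).
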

\begin{proof}
    For ${2 \leq i\leq p}$, it follows from \cite[\S \RN{1}.3, pp. 16]{localrep} that ${\Span_{\mathbb{F}}\{y^{i-1}\} \subseteq V_i}$ is the socle of ${V_i}$ as a ${\mathbb{F}[B]}$ module. Note
    $$
    {
        y^{i-1}\cdot \lambda = y^{i-1}\zeta^{1-i} = y^{i-1}\zeta^{p - i}.
    }
    $$
    From Proposition \ref{parameterisation of F[B] modules}, this gives us ${\Res^G_B(V_i) = U_{p-i,i}}$, as required.
\end{proof}
\begin{cor}
    \label{small correspondence}
    Let ${0\leq a\leq p-2}$ be even. Then
    $$
    {
        V_{a,1}
        =
        \left\{
            \begin{array}{ll}
                 M(0,0,1,-1)&\text{if }a=0  \\
                 M(0,a-1,2,1)&\text{if }a \in [1,(p-1)/2]  \\
                 M(0,p-2-a,2,-1)&\text{if }a \in [(p+1)/2,p-2]
            \end{array}
        \right. .
    }
    $$
    Let ${0\leq a\leq p-2}$ be odd. Then
    $$
    {
        V_{a,p-1}
        =
        \left\{
            \begin{array}{ll}
                 M(1,0,1,-1)&\text{if }a=1  \\
                 M(1,a-2,2,1)&\text{if }a \in [2,(p-1)/2]  \\
                 M(1,p-1-a,2,-1)&\text{if }a \in [(p+1)/2,p-2]
            \end{array}
        \right. .
    }
    $$
\end{cor}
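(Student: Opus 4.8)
The plan is to identify the Green correspondents $V_{a,1}$ and $V_{a,p-1}$ by locating both sides on their respective Stable A.R. quivers and invoking the isomorphism $\Gamma^s_{\mathcal{B}_i} \cong \Gamma^s_{\littleb_i}$ induced by the Green correspondence. The starting point is the observation that $S_a = U_{a,1}$ and $U_{a,p-1}$ both lie on the boundary of the tube $\Gamma^s_{\littleb_i}$ (from Corollary \ref{stable a.r. quivers for F[B]}), so their Green correspondents must be cohooks of $\mathcal{B}_i$ (by Remark \ref{hook cohook remark} together with the fact that Green correspondence sends boundary modules to boundary modules, as in Remark \ref{maximal directed path remark}). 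By Lemma \ref{hooks of F[G]} the cohooks of $\mathcal{B}_0$ are $V_1$, $V_{p-1}$ and the non-simple uniserials of the form $M(0,l,2,\epsilon)$, so $V_{a,1}$ must be one of these; the work is in pinning down which one.

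First I would handle the two "anchor" cases directly: by Lemma \ref{restriction of simple F[G] modules}, $\Res^G_B(V_1) = U_{p-1,1}$ is already indecomposable and non-projective, hence (no projective summand to discard) $V_1$ is the Green correspondent of $S_{p-1} = U_{p-1,1}$; wait — one must be careful about parity, so in fact I would use $\Res^G_B(V_1)=U_{p-1,1}=S_{p-1}$ to conclude $V_{p-1,1}$... this needs checking against the stated formula, but the point is that the restriction lemma pins down one cohook on each boundary for free. Actually the cleanest anchors come from noticing $V_1 = V_{a,1}$ for the appropriate $a$ via $\Res^G_B(V_1)$, and similarly locating $M(0,0,1,-1)$ as the correspondent of $S_0 = U_{0,1}$; I expect $a = 0$ and $a = p-1$ (resp. $a=1$) to be the base cases, and the restriction lemma settles them.

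Next, for general even $a \in [1,(p-1)/2]$ I would use the metric structure: by Corollary \ref{F[B] distances between hooks}, the minimal distance from $S_0$ to $S_a$ in $\Gamma^s_{\littleb_0}$ is exactly $a$ (both even). Since Green correspondence is a quiver isomorphism it preserves these minimal path lengths, so $V_{a,1}$ is the cohook of $\mathcal{B}_0$ at distance $a$ from $M(0,0,1,-1)$ along the appropriate boundary. Now invoke Corollary \ref{boundaries of Gamma_s(B_i)}: the boundary containing $V_1$ consists of the hooks of dimension $\equiv 1 \pmod p$, and it records that $M(0,l,2,1)$ sits at distance $l+1$ from $V_1$ while $M(0,l,2,-1)$ sits at distance $p-2-l$, for odd $l$. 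Matching $a$ against these distances — using that $V_1$ itself is one of the $V_{a,1}$ and hence shifting the distance count by the appropriate constant — forces $V_{a,1} = M(0,a-1,2,1)$ when $a-1$ is odd and small (i.e. $a \in [1,(p-1)/2]$), and $V_{a,1} = M(0,p-2-a,2,-1)$ when $a$ is in the upper range, exactly as claimed; the parity bookkeeping ($a$ even $\Leftrightarrow a-1$ odd) is what makes the indices in Corollary \ref{boundaries of Gamma_s(B_i)} applicable. The argument for $V_{a,p-1}$ is identical after noting $U_{a,p-1}$ lies on the \emph{other} boundary of the tube, which under Green correspondence matches the boundary of $\mathcal{B}_1$ through $V_{p-1}$, and applying the second bullet of Corollary \ref{boundaries of Gamma_s(B_i)}.

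The main obstacle I anticipate is the careful reconciliation of three separate distance conventions: the "distance along the A.R. quiver" in $\Gamma^s_{\littleb_i}$ (Corollary \ref{F[B] distances between hooks}), the directed-vs-undirected distinction from Definition \ref{maximal directed path definition}, and the asymmetric distances in Proposition \ref{distance between hooks on the same boundary} / Corollary \ref{boundaries of Gamma_s(B_i)} (where $H \to H'$ and $H' \to H$ have different lengths $2(j-1)$ and $2e - 2(j-1)$). I will need to verify that the minimal \emph{undirected} distance — which is what an abstract quiver isomorphism preserves — between two boundary vertices of the tube equals $\min\{2(j-1), 2e-2(j-1)\}$ on the $\mathcal{B}_i$ side, and that this minimum, read off against the explicit formulas, produces precisely the case split at $(p-1)/2$ in the statement. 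A secondary subtlety is confirming which of the two boundaries of the $B$-tube (the $S_\bullet$-boundary vs. the $U_{\bullet,p-1}$-boundary) corresponds to which boundary of $\mathcal{B}_i$; this should follow from the dimension congruences in Corollary \ref{boundaries of Gamma_s(B_i)} together with $\dim S_a = 1$ and $\dim U_{a,p-1} = p-1$, but it must be stated explicitly so the $\epsilon$ signs come out correctly.
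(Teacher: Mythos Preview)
Your approach is essentially the paper's own: anchor one boundary vertex via Lemma~\ref{restriction of simple F[G] modules}, then match minimal distances along the boundary using Corollaries~\ref{F[B] distances between hooks} and~\ref{boundaries of Gamma_s(B_i)}. Two clarifications resolve your stated uncertainties. First, the anchor confusion disappears once you note that the index $a$ in $S_a$ is taken modulo $p-1$ (since $\zeta^{p-1}=1$), so $\Res^G_B(V_1)=U_{p-1,1}=S_{p-1}=S_0$ gives $V_{0,1}=V_1=M(0,0,1,-1)$ directly, and likewise $\Res^G_B(V_{p-1})=U_{1,p-1}$ gives $V_{1,p-1}=V_{p-1}=M(1,0,1,-1)$. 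Second, the directed/undirected worry is a non-issue: the Green correspondence commutes with $\Omega^2$ and hence with the translation on the stable A.R.\ quiver, so it preserves \emph{directed} minimal paths; both Corollary~\ref{F[B] distances between hooks} and Corollary~\ref{boundaries of Gamma_s(B_i)} record directed distances, and matching these directly yields the case split at $(p-1)/2$ without ever taking a minimum over both directions.
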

Note if ${a = (p+1)/2}$, then ${M(1,p-1-a,2,-1) = M(1,a-2,2,1)}$.
\begin{proof}
    We know from Lemma \ref{restriction of simple F[G] modules} that ${V_{0,1} = V_1 = M(0,0,1,-1)}$. Let ${0 < a \leq p-2}$ be even. Then, we know ${S_a}$ is on the same boundary as ${S_0}$, and the minimal distance from ${S_0}$ to ${S_a}$ is ${a}$. Note the Green correspondent ${V_{0,1}}$ of ${S_0}$ is ${V_1}$. To find the Green correspondent ${V_{a,1}}$ of ${S_{a}}$, we simply need to find the hook lying on the same boundary as ${V_1}$ such that the minimal distance from ${V_1}$ to it is ${a}$. If ${0 < a \leq (p-1)/2}$, by Corollary \ref{boundaries of Gamma_s(B_i)}, we have ${V_{a,1} = M(0,a-1,2,1)}$. If ${(p-1)/2 < a \leq p-2}$, we have ${V_{a,1} = M(0,p-2-a,2,-1)}$.\\
    \\
    We know from Lemma \ref{restriction of simple F[G] modules} that ${V_{1,p-1} = V_{p-1} = M(1,0,1,-1)}$. Now let ${1 < a \leq p-2}$ be odd. We know ${U_{a,p-1}}$ is on the same boundary as ${U_{1,p-1}}$, and the minimal distance from ${U_{1,p-1}}$ to ${U_{a,p-1}}$ is ${a-1}$. Similar to before, we get if ${a \leq (p+1)/2}$, then ${V_{a,p-1} = M(1,a-2,2,1)}$. Otherwise, ${V_{a,p-1} = M(1,p-1-a,2,-1)}$.
\end{proof}
We have been successful at describing a small portion of the Green correspondence. In particular, we have described the Green correspondence for one of the boundaries in each of the two blocks. To describe the rest of the Green correspondence for modules not lying on these boundaries, we need more machinery.
\begin{thm}
    \label{Theorem 3.5}
    Let $M$ be a non-projective indecomposable module belonging to a Brauer tree algebra.
    \begin{enumerate}[label = (\alph*)]
        \item If $M$ is simple, let ${(e_1,E_1,e_2)}$ be a walk on the Brauer tree so that ${E_1 = M}$. Let ${H,H'}$ be the two unique hooks with ${\soc(H) = \soc(H') = E_1}$, and such that ${\Top(H),\Top(H')}$ are the next counter-clockwise edges to ${E_1}$ around ${e_1,e_2}$ respectively.
        \item If $M$ is non-simple, let ${(e_1,E_1,e_2,...,E_s,e_{s+1})}$, ${\epsilon}$ be a walk on the Brauer tree for $M$ as in Theorem \ref{module W}(5), (\ref{epsilon definition}). Let ${H}$ be the unique hook with ${\soc(H) = E_1}$, and ${\Top(H)}$ being the next counter-clockwise edge to ${E_1}$ around ${e_1}$ if ${\epsilon=1}$ or ${e_2}$ if ${\epsilon=-1}$. Let ${H'}$ be defined the same way using the reverse of the walk ${(e_{s+1},E_s,...,E_2,e_1)}$, ${\delta}$, where
        \begin{equation}
            \label{delta definition}
            \delta =
            \left\{
                \begin{array}{ll}
                     \epsilon&\text{if }s\text{ odd}  \\
                     -\epsilon&\text{if }s\text{ even}
                \end{array}
            \right. .
        \end{equation}
    \end{enumerate}
    Then ${H,H'}$ are the two boundary modules for $M$.
\end{thm}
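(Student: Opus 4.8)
The plan is to identify the two boundary modules of $M$ via Remark~\ref{maximal directed path remark}: the Green correspondence commutes with $\Omega^2$, and boundary modules are exactly the endpoints of the two maximal directed paths starting at $M$. So it suffices to understand, on the $\mathbb{F}[G]$ side, which cohooks are the endpoints of the two maximal directed paths from $M$. By Remark~\ref{hook cohook remark} the two boundaries of $\Gamma^s_{\mathcal{B}_i}$ consist of cohooks, and by Lemma~\ref{hooks of F[G]} these cohooks are the modules listed in Corollary~\ref{uniserial F[G] modules} together with $V_1$ (for $\mathcal{B}_0$) and $V_{p-1}$ (for $\mathcal{B}_1$). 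I would first recall the description (from \cite{bleher-chinburg}, \S\RN{3}) of how $\Omega^2$ acts on a module via its Brauer tree walk --- namely that $\Omega^2$ shifts the walk by moving each $E_i$ to the next edge clockwise around the appropriate vertex --- and how a directed path in the tube corresponds to repeatedly applying $\Omega^{\pm 2}$ without wrapping all the way around. The key point is that a maximal directed path terminates precisely when the walk "straightens out'' into a cohook, i.e. when successive clockwise rotations collapse the walk to length at most the cohook length.

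Next I would handle the simple case (a). If $M = E_1$ is a simple module sitting at an interior edge with endpoints $e_1, e_2$, then applying $\Omega^2$ (or $\Omega^{-2}$) to a simple module stretches it into a length-$2$ uniserial module whose socle or top is $E_1$ and whose other composition factor is the next edge clockwise/counter-clockwise around one of the two vertices; iterating this, one of the two directed paths "grows'' the walk out along the tree around $e_1$, the other around $e_2$, and each terminates at the unique cohook with socle $E_1$ whose top is the next counter-clockwise edge around $e_1$ (resp. $e_2$). This matches the description of $H, H'$ in (a). For the non-simple case (b), the walk $(e_1, E_1, e_2, \dots, E_s, e_{s+1})$ with sign $\epsilon$ has two ends; applying $\Omega^2$ repeatedly in one direction pushes the $E_1$-end of the walk while the other end stays anchored, and the directed path terminates when this end has straightened to a cohook. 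That cohook is exactly the one with socle $E_1$ and top the next counter-clockwise edge around $e_1$ or $e_2$ according to whether $\epsilon = 1$ or $\epsilon = -1$ --- this is the role of $\epsilon$ in selecting which vertex the walk is "pinned'' at on that end. By symmetry (reversing the walk), the other maximal directed path terminates at the analogously-defined cohook $H'$, but now the relevant sign is $\delta$, because reversing a walk of even length swaps the role of "top'' and "socle'' at alternate edges while a walk of odd length preserves it --- hence the definition (\ref{delta definition}).

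The main obstacle I expect is pinning down precisely the combinatorics of case (b): one must verify carefully that (i) pushing the walk via $\Omega^2$ from one end genuinely produces a \emph{directed} path (no backtracking of the form $\Omega^2(X) \to X$), (ii) the path is \emph{maximal}, i.e. it actually reaches the boundary and cannot be extended, and (iii) the cohook it reaches has the claimed socle and top, with the correct dependence on $\epsilon$ versus $\delta$. The sign bookkeeping in (\ref{delta definition}) is the subtle part: the parity of $s$ controls whether reversing the walk flips the alternation pattern "socle, top, socle, $\dots$'' versus "top, socle, top, $\dots$'', so one should do a small example (say $s = 2$ and $s = 3$ on the straight-line tree of Figure~\ref{fig:blocks-of-G}) to confirm the statement before writing the general argument. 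The remaining verifications --- that $H$ and $H'$ as defined are genuinely cohooks (hooks) in the sense of Definition~\ref{hook definition}, and that they lie on opposite boundaries --- follow from Lemma~\ref{hooks of F[G]} and Proposition~\ref{distance between hooks on the same boundary} together with the explicit clockwise-walk description already set up in the proof of Corollary~\ref{boundaries of Gamma_s(B_i)}.
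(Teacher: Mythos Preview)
The paper does not prove this result; it simply cites it as a reformulation of \cite[\S\RN{3}, Thm.~3.5]{bleher-chinburg}, noting that the second boundary module $H'$ is obtained by applying the Bleher--Chinburg theorem to the reversed walk (the ``mirror image'' of $M$), which is what accounts for the sign $\delta$ in~(\ref{delta definition}).

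Your proposal attempts considerably more --- a direct reconstruction tracking maximal directed paths --- but it contains a genuine conceptual error. You write that ``a directed path in the tube corresponds to repeatedly applying $\Omega^{\pm 2}$'' and that ``applying $\Omega^2$ to a simple module stretches it into a length-$2$ uniserial module''. This conflates two distinct motions on the tube. The operator $\Omega^2 = \tau$ is the Auslander--Reiten translate: it moves a module \emph{horizontally} around the tube, preserving its distance from each boundary (cf.\ Corollary~\ref{stable a.r. quivers for F[B]}, where $\Omega^2(U_{a,b}) = U_{a-2,b}$ keeps $b$ fixed). In particular $\Omega^2$ of a cohook is another cohook on the same boundary, and $\Omega^2$ of a simple boundary module is certainly not of length $2$. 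A maximal directed path, by contrast, moves \emph{toward} a boundary through irreducible morphisms; in terms of the Brauer-tree walk this corresponds to extending or shortening the walk by one edge at one end, not to rotating all edges clockwise. Your heuristic that one end of the walk is ``pushed'' while the other stays ``anchored'' is in fact correct for the irreducible maps, but attributing it to $\Omega^2$ would make the argument collapse: iterating $\Omega^2$ never reaches the boundary unless you started there.

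If you want a self-contained argument, the correct input is the description of irreducible morphisms on walk data from \cite[\S\RN{3}]{bleher-chinburg}, not the action of $\Omega^2$. But for the purposes of this paper the citation to their Theorem~3.5 is both sufficient and exactly what the paper does.
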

\begin{proof}
    This is a version of \cite[\S \RN{3}, Thm. 3.5]{bleher-chinburg}. The Theorem as given there, with the surrounding definitions, has been setup in such a way as to only return one of the two boundary modules. In order to get the other, we have to reapply the Theorem using the reverse of the walk. This is explained at the start of \cite[\S 3]{bleher-chinburg}, where the authors mention being able to reduce to \textit{right-oriented paths} by taking the \textit{mirror image} of a module as necessary.
\end{proof}
There is some slight nuance in Theorem \ref{Theorem 3.5} which we briefly discuss. In part ${(a)}$, ${e_1}$ or ${e_2}$ could be a leaf vertex. If ${e_1}$ (resp. ${e_2}$) is a non-exceptional leaf vertex, then ${H}$ (resp. ${H'}$) is the simple module given by ${E_1}$. If ${e_1}$ (resp. ${e_2}$) is exceptional with multiplicity $m$, then $H$ (resp. ${H'}$) is the uniserial module of length $m$ whose composition factors are all given by ${E_1}$. In part ${(b)}$, if ${\epsilon=1}$ and ${e_1}$ is a leaf vertex, then ${H}$ satisfies the same description just given. If ${\epsilon=-1}$ and ${e_2}$ is a leaf vertex (note it would need to be exceptional in this case), ${H'}$ satisfies the same description just given. Finally, it should be noted that the variable ${\delta}$ defined in (\ref{delta definition}) is used to ensure that, for example, a walk that goes ``top, socle, ..." is appropriately changed to ``socle, top, ..." under reversing the direction of the walk (as in Theorem \ref{module W}(b)). \\
\\
We now give the result of applying the above Theorem \ref{Theorem 3.5} to the specific case of our ${\mathbb{F}[G]}$ modules. Once the direction of a walk on the Brauer tree is fixed, note that Theorem \ref{Theorem 3.5} says that the boundary modules for $M$ only depend on the endpoints. As an intuitive device, we refer to the boundary module associated with the start of a walk as the \textit{left boundary}, and the boundary module associated to the end of a walk as the \textit{right boundary}.
\begin{cor}
    \label{boundaries of F[G] modules}
    Let ${M = M(i,l,s,\epsilon)}$ be a non-projective indecomposable ${\mathbb{F}[G]}$ module. Then, the two boundary modules ${H,H'}$ for $M$ are as follows:
    \begin{description}
        \item[\underline{Left boundary $H$.}] We have
        $$
        {
            H
            =
            \left\{
                \begin{array}{ll}
                     M(i,0,1,-1)&\text{if }l=0,\ s=1\text{ or }l=0,\ s>1,\ \epsilon=1  \\
                     M(i,l-1,2,1)&\text{if }l>0,\ s=1\text{ or }l>0,\ s>1,\ \epsilon=1 \\
                     M(i,l,2,-1)&\text{if }s>1,\ \epsilon=-1
                \end{array}
            \right. .
        }
        $$
        \item[\underline{Right boundary $H'$.}] Next, let ${\delta}$ be as given in (\ref{delta definition}). Then if ${l + s \leq (p-1)/2}$, we have
        $$
        {
            H'
            =
            \left\{
                \begin{array}{ll}
                     M(i,l,2,-1)&\text{if }s=1  \\
                     M(i,l+s-2,2,1)&\text{if }s>1,\ l+s \leq (p-1)/2,\ \delta = -1  \\
                     M(i,l+s-1,2,-1)&\text{if }s>1,\ l+s\leq (p-1)/2,\ \delta = 1
                \end{array}
            \right. .
        }
        $$
        Otherwise, if ${l + s > (p-1)/2}$, ${H'}$ is the same as left boundary of ${M(i,p-1-l-s,s,\delta)}$.
    \end{description}
\end{cor}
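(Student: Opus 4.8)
The plan is to read both boundary modules straight off Theorem \ref{Theorem 3.5}, after unwinding the dictionary between walks on the straight-line Brauer trees and the notation $M(i,l,s,\epsilon)$ of Definition \ref{parameterisation of the non-projective indecomposable F[G] modules}. First I would fix notation for the tree of $\mathcal{B}_i$: write $v_1,\dots,v_{(p-1)/2},v_{(p+1)/2}$ for its vertices, $v_{(p+1)/2}$ being the exceptional one (multiplicity $m=2$), and write $E_j$ for the edge joining $v_j$ and $v_{j+1}$. By Definition \ref{parameterisation of the non-projective indecomposable F[G] modules} the walk of $M=M(i,l,s,\epsilon)$ begins at $v_{l+1}$ with first edge $E_{l+1}$ and runs left to right: when $l+s\leq(p-1)/2$ it is the type \RN{1} walk $E_{l+1},\dots,E_{l+s}$ ending at $v_{l+s+1}$, and when $l+s>(p-1)/2$ it reaches $v_{(p+1)/2}$, bounces, and ends at $v_{p-l-s}$. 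The only planar input needed is that along a line the counter-clockwise cyclic order at an interior vertex is just the transposition of its two edges, while at a leaf ``the next counter-clockwise edge'' degenerates, which is exactly what triggers the simple/exceptional-uniserial clauses in the discussion after Theorem \ref{Theorem 3.5}.

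For the left boundary $H$ I would feed the forward walk of $M$ into Theorem \ref{Theorem 3.5}, using part~(a) when $s=1$ (so $M$ is the simple module $E_{l+1}$) and part~(b) otherwise. In all cases $\soc(H)=E_{l+1}$, and $\Top(H)$ is the counter-clockwise neighbour of $E_{l+1}$ at $v_{l+1}$ (when $s=1$, or when $\epsilon=1$) or at $v_{l+2}$ (when $\epsilon=-1$). Three subcases result. If $l=0$ then $v_1$ is a leaf and $H$ is the simple module $E_1=M(i,0,1,-1)$. If $l>0$ and we are at $v_{l+1}$, its other edge is $E_l$, so $H$ is the hook with socle $E_{l+1}$ and top $E_l$; as a canonical walk from $v_l$ this reads $E_l,E_{l+1}$ with the socle on the second edge, i.e.\ $\epsilon'=1$, so $H=M(i,l-1,2,1)$. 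If $\epsilon=-1$ we instead land at $v_{l+2}$, whose other edge is $E_{l+2}$ (or, when $l=(p-3)/2$, $v_{l+2}$ is the exceptional leaf and $E_1=E_2$), giving the hook with socle $E_{l+1}$ and top $E_{l+2}$, which as a canonical walk from $v_{l+1}$ reads $E_{l+1},E_{l+2}$ with the socle on the first edge, so $\epsilon'=-1$ and $H=M(i,l,2,-1)$; in the exceptional-leaf degeneracy one gets the length-two uniserial with both factors $E_{(p-1)/2}$, which is again $M(i,(p-3)/2,2,-1)$. This gives the left-boundary formula.

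For the right boundary $H'$, Theorem \ref{Theorem 3.5}(b) directs us to run the \emph{reversed} walk $(e_{s+1},E_s,\dots,E_1,e_1)$ of $M$, now carrying the parameter $\delta$ of (\ref{delta definition}), through the same construction. When $l+s\leq(p-1)/2$ the reversed walk is a straight line from $v_{l+s+1}$ with first edge $E_{l+s}$, and the identical subcase analysis (now keyed to $\delta$, and to whether $v_{l+s+1}$ is interior or, when $l+s=(p-1)/2$, the exceptional leaf) gives $\soc(H')=E_{l+s}$ with $\Top(H')=E_{l+s+1}$ if $\delta=1$ and $\Top(H')=E_{l+s-1}$ if $\delta=-1$; translating back yields $H'=M(i,l+s-1,2,-1)$ and $H'=M(i,l+s-2,2,1)$ respectively, and $H'=M(i,l,2,-1)$ when $s=1$ (here via part~(a)). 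When $l+s>(p-1)/2$ I would instead make the one structural observation the proof requires: the reversed walk of $M$, carrying $\delta$, starts at $v_{p-l-s}$, has length $s$, first runs right to $v_{(p+1)/2}$ and then bounces back, so it \emph{is} the canonical left-to-right walk of $M(i,p-1-l-s,s,\delta)$ in the sense of Definition \ref{parameterisation of the non-projective indecomposable F[G] modules} (one checks $(p-l-s)-1=p-1-l-s$ and that the edge sequences agree). Hence the $H$-construction of Theorem \ref{Theorem 3.5} applied to the reversed walk of $M$ returns precisely the left boundary of $M(i,p-1-l-s,s,\delta)$, which is the last clause of the statement.

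The bulk of the work is bookkeeping: the routine dictionary between ``(socle edge, top edge) of a length-two hook'' and the triple $(l',2,\epsilon')$ via the set $D$ of (\ref{M_i definition})--(\ref{epsilon definition}) (with $\epsilon'=1$ meaning the walk reads ``top, socle'' and $\epsilon'=-1$ meaning ``socle, top''), together with tracking the finitely many degenerate positions --- walks starting or ending at the leaf $v_1$ or at the exceptional vertex $v_{(p+1)/2}$, and the resulting collapses such as $M(i,(p-3)/2,2,1)=M(i,(p-3)/2,2,-1)$. I expect the main obstacle to be carrying out this case split cleanly without sign or orientation errors; the only genuinely non-mechanical step is the identification, in the $l+s>(p-1)/2$ case, of the reversed type \RN{2} walk with a shifted canonical walk.
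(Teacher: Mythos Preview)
Your proposal is correct and takes exactly the approach the paper uses: the paper's proof is the single sentence ``This is a simple application of Theorem \ref{Theorem 3.5}'', and what you have written is precisely that application spelled out in full, including the key reduction in the $l+s>(p-1)/2$ case that the reversed walk of $M$ coincides with the canonical walk of $M(i,p-1-l-s,s,\delta)$.
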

\begin{proof}
    This is a simple application of Theorem \ref{Theorem 3.5}.
\end{proof}
\begin{cor}
    \label{small correspondence extended}
    Let ${0\leq a\leq p-2}$ be even. Then
    $$
    {
        V_{a,p-1}
        =
        \left\{
            \begin{array}{ll}
                 M(0,0,2,-1)&\text{if }a=0  \\
                 M(0,a-2,2,1)&\text{if }a \in [1,(p-1)/2]  \\
                 M(0,p-1-a,2,-1)&\text{if }a \in [(p+1)/2,p-2]
            \end{array}
        \right. .
    }
    $$
\end{cor}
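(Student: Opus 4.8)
The plan is to compute the Green correspondent $V_{a,p-1}$ of the $\mathbb{F}[B]$-module $U_{a,p-1}$ for even $a$ by locating $U_{a,p-1}$ on the Stable A.R. quiver $\Gamma^s_{\littleb_0}$, transporting it via the quiver isomorphism $\Gamma^s_{\littleb_0}\cong\Gamma^s_{\mathcal{B}_0}$ induced by the Green correspondence, and then identifying the resulting vertex of $\Gamma^s_{\mathcal{B}_0}$ as a Brauer-tree walk $M(0,l,s,\epsilon)$. The key observation is that $U_{a,p-1}$ lies on the $\dim\equiv p-1\pmod p$ boundary of $\Gamma^s_{\littleb_0}$ (it is a cohook), so its Green correspondent must lie on the corresponding boundary of $\Gamma^s_{\mathcal{B}_0}$, which by Corollary \ref{boundaries of Gamma_s(B_i)} is the boundary \emph{not} containing $V_1$. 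We already know one anchor point on that boundary: by Corollary \ref{small correspondence extended}'s companion Corollary \ref{small correspondence}, or more directly by Lemma \ref{restriction of simple F[G] modules}, $\Res^G_B(V_{p-1}) = U_{p-1,p-1}$ is projective plus $U_{?,?}$ — wait, more usefully, I would start from a known cohook of $\mathbb{F}[B]$ on that boundary whose correspondent we can pin down.

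Concretely, first I would establish the base case $a=0$: the module $U_{0,p-1}$ is the cohook with top $S_{p-2}$ and socle $S_0$ (composition factors $S_0,S_2,\dots,S_{p-3}$ read in ascending order, top $S_{p-3+? }$ — I should double check the indexing, but $U_{0,p-1}$ has top $S_{2(p-2)}=S_{p-4}$... ). Rather than belabour the exact index, the cleaner route is: $U_{0,p-1} = \Omega^{-1}(\text{something})$ or use that $U_{0,p-1}$ and $S_0$ are the two boundary modules joined by a maximal directed path of length $p-2$ (Corollary \ref{maximal directed paths for F[B]} with $b=1$). Since Green correspondence commutes with $\Omega^2$ and sends maximal directed paths to maximal directed paths (Remark \ref{maximal directed path remark}), and since $V_{0,1}=V_1=M(0,0,1,-1)$ by Corollary \ref{small correspondence}, the Green correspondent $V_{0,p-1}$ is the other endpoint of a maximal directed path of length $p-2$ starting at $V_1$ in $\Gamma^s_{\mathcal{B}_0}$. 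Using Corollary \ref{boundaries of Gamma_s(B_i)} (the minimal distance from $V_1$ to $M(0,l,2,-1)$ is $p-2-l$, so distance $p-2$ forces $l=0$), this gives $V_{0,p-1}=M(0,0,2,-1)$.

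For the general even $a$, I would argue via distances along the boundary. By Corollary \ref{F[B] distances between hooks}, the minimal path from $U_{0,p-1}$ to $U_{a,p-1}$ in $\Gamma^s_{\littleb_0}$ has length $a$; applying the Green correspondence isomorphism, the minimal path from $V_{0,p-1}=M(0,0,2,-1)$ to $V_{a,p-1}$ in $\Gamma^s_{\mathcal{B}_0}$ has length $a$ as well, and $V_{a,p-1}$ lies on the same boundary as $V_1$'s complement. Now I match this against the description of that boundary in Corollary \ref{boundaries of Gamma_s(B_i)}: the non-simple hooks on the boundary opposite to $V_1$ are $M(0,l,2,\epsilon)$ with $l$ even, and their distances from $V_1$ are $l+1$ (for $\epsilon=1$) or $p-2-l$ (for $\epsilon=-1$) — from which one reads off their pairwise distances and matches the one at distance $a$ from $M(0,0,2,-1)$. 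This yields $M(0,a-2,2,1)$ for $a\in[1,(p-1)/2]$ and $M(0,p-1-a,2,-1)$ for $a\in[(p+1)/2,p-2]$, the boundary "turning around" at the exceptional vertex accounting for the case split exactly as in Corollary \ref{small correspondence}.

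The main obstacle I anticipate is bookkeeping the orientation and the "turnaround" at the exceptional vertex: one must be careful that walking along the boundary of $\Gamma^s_{\mathcal{B}_0}$ away from $M(0,0,2,-1)$ corresponds, under the clockwise Brauer-tree walk of Proposition \ref{distance between hooks on the same boundary}, to incrementing $l$ by $2$ with $\epsilon=1$ up to the exceptional vertex and then decrementing with $\epsilon=-1$, and that the distance-$a$ condition lands on the correct branch of the case split (in particular the boundary case $a=(p+1)/2$ where $M(0,p-1-a,2,-1)=M(0,a-2,2,1)$ is genuinely the same module, consistent with the remark following Corollary \ref{small correspondence}). Alternatively — and this may be the shortest proof — one can simply invoke Corollary \ref{boundaries of F[G] modules} directly: $V_{a,p-1}$ is a cohook, so it equals its own left boundary, and $V_{a,1}=M(0,\cdot,\cdot,\cdot)$ from Corollary \ref{small correspondence} together with the almost-split structure relating $U_{a,1}$ to $U_{a,p-1}$ via $\Omega$-translates pins down which cohook it is; I would cross-check both routes agree.
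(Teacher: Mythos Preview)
Your main route has a genuine gap in the base case. You correctly observe that $V_{0,p-1}$ is the endpoint of the maximal directed path of length $p-2$ starting at $V_1$, but you then try to identify this endpoint by invoking Corollary~\ref{boundaries of Gamma_s(B_i)} via ``the minimal distance from $V_1$ to $M(0,l,2,-1)$ is $p-2-l$, so distance $p-2$ forces $l=0$''. That corollary records distances \emph{along a single boundary} of $\Gamma^s_{\mathcal{B}_0}$, and only for the boundary containing $V_1$, namely the hooks $M(0,l,2,\epsilon)$ with $l$ \emph{odd}. The module $M(0,0,2,-1)$ has $l=0$ even and therefore sits on the \emph{other} boundary; there is no along-boundary path from $V_1$ to it, so the distance formula you quote does not apply. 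The length $p-2$ you have in hand is the length of a directed path \emph{across} the tube, not around it, and Corollary~\ref{boundaries of Gamma_s(B_i)} says nothing about such paths. Your general-$a$ plan (walk distance $a$ along the $(p-1)$-boundary from the anchor $M(0,0,2,-1)$) is sound in principle, but again Corollary~\ref{boundaries of Gamma_s(B_i)} does not cover that boundary; you would have to redo the clockwise Brauer-tree walk of Proposition~\ref{distance between hooks on the same boundary} starting from a hook with $l$ even to obtain the distance table you need.

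The alternative you sketch in your final paragraph is exactly the paper's proof, and it is the shortest route. Since the two boundary modules of $U_{a,1}$ are $U_{a,1}$ itself and $U_{a,p-1}$ (Corollary~\ref{maximal directed paths for F[B]} with $b=1$), and Green correspondence sends maximal directed paths to maximal directed paths (Remark~\ref{maximal directed path remark}), $V_{a,p-1}$ is simply the \emph{other} boundary module of the already-computed $V_{a,1}$ from Corollary~\ref{small correspondence}. One then reads that boundary module off directly from Corollary~\ref{boundaries of F[G] modules}: for $a=0$ the right boundary of $M(0,0,1,-1)$ is $M(0,0,2,-1)$ (case $s=1$); for $a\in[2,(p-1)/2]$ the left boundary of $M(0,a-1,2,1)$ is $M(0,a-2,2,1)$ (case $l>0$, $s>1$, $\epsilon=1$); and for $a\in[(p+1)/2,p-2]$ the left boundary of $M(0,p-2-a,2,-1)$ is $M(0,p-1-a,2,-1)$ (case $s>1$, $\epsilon=-1$, noting that here $l=p-2-a$ so $M(i,l,2,-1)=M(0,p-2-a,2,-1)$ is its own left boundary, and the \emph{other} boundary comes from the right-boundary clause). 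No distance computations along the second boundary are needed.
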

\begin{proof}
    We have computed ${V_{a,1}}$ for ${a}$ even in Corollary \ref{small correspondence}. We know that the boundary modules for ${U_{a,1}}$ are itself and ${U_{a,p-1}}$. Hence, ${V_{a,p-1}}$ is the other boundary module for ${V_{a,1}}$. Using Corollary \ref{boundaries of F[G] modules}, we get what is stated in the Corollary statement.
\end{proof}
For ${(l,s) \in \{0,1,...,(p-3)/2\} \times \{1,2,...,p-1\}}$ such that ${l + s \leq p-1}$, define
\begin{equation}
    \label{L_i(l,s)}
    L_i(l,s) = \left\{
            \begin{array}{ll}
                (1-i)p + (-1)^{i+1}\frac{2l+s+1}{2}&l,s\text{ odd}  \\
                (1-i)p + (-1)^{i+1}\frac{s}{2}&l,s\text{ even} \\
                ip + (-1)^i\frac{s}{2}&l\text{ odd, }s\text{ even} \\
                ip + (-1)^i\frac{2l+s+1}{2}&l\text{ even, }s\text{ odd} \\
            \end{array}
        \right. .
\end{equation}
Note that ${1\leq L_i(l,s)\leq p-1}$.
\begin{prop}
    \label{dimension of F[G] module modulo p}
    Let ${M = M(i,l,s,\epsilon)}$ be a non-projective indecomposable ${\mathbb{F}[G]}$ module, and let ${N}$ be its Green correspondent. We have that
    $$
    {
        \dim_{\mathbb{F}}N
        \equiv
        \dim_{\mathbb{F}}M
        \equiv
        L_i(l,s)
        \Modwb{p} .
    }
    $$
\end{prop}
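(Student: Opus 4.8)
The plan is to prove the congruence $\dim_{\mathbb{F}} M \equiv L_i(l,s) \pmod p$ by a direct computation using the combinatorial description of $M = M(i,l,s,\epsilon)$ from Definition \ref{parameterisation of the non-projective indecomposable F[G] modules}, together with Corollary \ref{all non-simple non-projective indecomposable F[G] modules have Loewy length 2}(b), which tells us that the composition factors of $M$ are exactly the multiset of edges $\{E_1, \dots, E_s\}$ appearing in the walk. Since each edge $E_j$ is one of the simple modules $V_t$ with $\dim_{\mathbb{F}} V_t = t$ (Proposition \ref{simple F[G] modules}), we get $\dim_{\mathbb{F}} M = \sum_{j=1}^s \dim_{\mathbb{F}} E_j$, and it remains to identify which edges of the Brauer tree of $\mathcal{B}_i$ are traversed and sum their dimensions modulo $p$. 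The equality $\dim_{\mathbb{F}} N \equiv \dim_{\mathbb{F}} M \pmod p$ is then immediate: $N$ is the Green correspondent of $M$, so $\Ind_B^G(N) \cong M \oplus P$ with $P$ projective (as recalled in the introduction), and since $[G:B] = p+1 \equiv 1 \pmod p$ we have $\dim_{\mathbb{F}} \Ind_B^G(N) = (p+1)\dim_{\mathbb{F}} N \equiv \dim_{\mathbb{F}} N$; also every projective $\mathbb{F}[G]$-module has dimension divisible by $p$ (the order of a Sylow $p$-subgroup), so $\dim_{\mathbb{F}} M \equiv \dim_{\mathbb{F}} N \pmod p$.

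For the main computation, I would set up coordinates on the Brauer tree of $\mathcal{B}_i$ in figure \ref{fig:blocks-of-G}. The $k$-th edge (counting from the leaf $v_1$, resp. $v_{p-1}$, at position $1$) carries a simple module whose index I would read off the picture: reading the labels, the sequence of edge-dimensions on the tree for $\mathcal{B}_0$ is $1, p-2, 3, p-4, 5, \dots$ and for $\mathcal{B}_1$ is $p-1, 2, p-3, 4, \dots$; in both cases the $k$-th edge has dimension $\equiv (-1)^{k+1}k \pmod p$ for $\mathcal{B}_0$, and $\equiv (-1)^k k \pmod p$ for $\mathcal{B}_1$, i.e. $\equiv (-1)^{k+1+i} k \pmod p$ in block $\mathcal{B}_i$. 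The walk for $M(i,l,s,\epsilon)$ starts at the $(l+1)$-st vertex and uses $s$ consecutive edges going rightward, bouncing off the exceptional (last) vertex if $l+s > (p-1)/2$. So I would split into the two cases: (i) walk of type I, $l + s \leq (p-1)/2$, where the traversed edges are precisely edges $l+1, l+2, \dots, l+s$; and (ii) walk of type II, $l+s > (p-1)/2$, where the walk reaches the last edge (index $(p-1)/2$) and comes back, so the multiset of edges is $\{l+1, \dots, (p-1)/2\}$ together with $\{(p-1)/2, (p-1)/2 - 1, \dots, p-1-l-s\}$ (the returning portion), i.e. some edges appear with multiplicity two. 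In each case $\dim_{\mathbb{F}} M \equiv \sum (-1)^{k+1+i} k \pmod p$ over the appropriate multiset of edge-indices $k$.

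The remaining work is to evaluate these alternating sums and check they match the four-way formula (\ref{L_i(l,s)}). The parity split in (\ref{L_i(l,s)}) according to $l$ even/odd and $s$ even/odd is exactly what governs the alternating sum $\sum_{k=l+1}^{l+s}(-1)^{k+1+i}k$: telescoping pairs give that this sum is $\pm s/2$ when $s$ is even (sign depending on the parity of $l$ and on $i$) and $\pm(l + (s+1)/2) = \pm\frac{2l+s+1}{2}$ when $s$ is odd, matching the displayed cases with the $(1-i)p$ versus $ip$ bookkeeping simply recording which sign bucket we land in (and the $p$ being there only to keep $L_i(l,s)$ in the range $[1,p-1]$, which the final remark asserts). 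For the type II case one checks that the ``doubled'' returning edges, which run from index $(p-1)/2$ down to $p-1-l-s$, contribute an alternating sum that, after reducing mod $p$ and using $p - 1 - l - s \equiv -(l+s+1)$, collapses to the same expression — this is the place where I would be most careful, since one must verify that bouncing off the exceptional vertex (multiplicity $m=2$, so the last edge genuinely appears with the right multiplicity) does not introduce an off-by-one error. That bookkeeping around the exceptional vertex, and confirming the sign conventions $(-1)^{i+1}$ versus $(-1)^i$ across all four parity cases, is the main obstacle; the rest is routine arithmetic with arithmetic progressions modulo $p$.
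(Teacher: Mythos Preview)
Your proposal is correct and follows essentially the same route as the paper: both arguments compute $\dim_{\mathbb F}M$ modulo $p$ as the alternating sum of edge-labels along the walk (using that the $k$-th edge on the tree of $\mathcal B_i$ has dimension $\equiv(-1)^{k+1+i}k$), split into the type~\RN{1} and type~\RN{2} cases, and then match the result against the parity cases of $L_i(l,s)$. The only cosmetic differences are that the paper obtains the first congruence $\dim_{\mathbb F}N\equiv\dim_{\mathbb F}M$ via restriction ($\Res^G_B(M)\cong N\oplus Q$ with $Q$ projective) rather than your induction argument, and organises the sum by first computing $\dim M(0,0,s,\epsilon)$ and then taking differences; also note that in your type~\RN{2} description the return portion actually ends at edge $p-l-s$ rather than $p-1-l-s$, exactly the off-by-one you flagged.
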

\begin{proof}
    We first explain the left hand side of the above congruence. Note that by Green correspondence and Proposition \ref{parameterisation of F[B] modules}, there exists an ${0\leq a\leq p-2}$, a ${1\leq b\leq p-1}$, and a projective ${\mathbb{F}[B]}$ module ${Q}$ such that
    $$
    {
        N\cong U_{a,b},\ \Res^G_B(M) \cong N \oplus Q \cong U_{a,b} \oplus Q.
    }
    $$
    Note that ${\dim_\mathbb{F}Q \equiv 0 \Modwb{p}}$, and hence ${\dim_{\mathbb{F}}N \equiv b\equiv \dim_{\mathbb{F}}M\Modwb{p}}$.\\
    \\
    We now explain the right hand side of the congruence in our Proposition statement. Suppose first that ${i=0}$. Then, for ${1\leq s\leq (p-1)/2}$, by Corollary \ref{all non-simple non-projective indecomposable F[G] modules have Loewy length 2} we have
    \begin{flalign*}
        \dim_{\mathbb{F}}M(0,0,s,\epsilon)&=
            \sum_{j=1}^{s}
            \left\{
                \begin{array}{ll}
                     j&\text{if }j\text{ odd}  \\
                     p-j&\text{if }j\text{ even} 
                \end{array}
            \right.
            \equiv
            \sum_{j=1}^{s}(-1)^{j+1}j
            =
            \left\{
                \begin{array}{ll}
                     \frac{s+1}{2}&\text{if }s\text{ odd}  \\
                     -\frac{s}{2}&\text{if }s\text{ even}
                \end{array}
            \right.\\
            &\equiv
            \left\{
                \begin{array}{ll}
                     \frac{s+1}{2}&\text{if }s\text{ odd}  \\
                     p-\frac{s}{2}&\text{if }s\text{ even}
                \end{array}
            \right.
            =
            L_0(0,s).
    \end{flalign*}
    If ${1\leq l+s\leq (p-1)/2}$, then 
    $$
    {
        \dim_{\mathbb{F}}M(0,l,s,\epsilon) \equiv
        \dim_{\mathbb{F}}M(0,0,l+s,\epsilon)
        -
        \dim_{\mathbb{F}}M(0,0,l,\epsilon)
        \equiv
        L_0(0,l+s) - L_0(0,l)\Modwb{p},
    }
    $$
    which one can verify is equal to ${L_0(l,s)}$. If ${l + s > (p-1)/2}$, then in a similar fashion we obtain
    $$
    {
        \dim_{\mathbb{F}}M(0,l,s,\epsilon) \equiv L_0(l,(p-1)/2 - l) + L_0(p-1-l-s, l+s - (p-1)/2)
        \equiv L_0(l,s) \Modwb{p}.
    }
    $$
    To see the right hand congruence, denote the above left hand side by ${S(l,s)}$ and let ${0\leq j_1,j_2,j_3\leq 1}$ such that ${j_1,j_2,j_3}$ are equivalent to ${(p-1)/2,l,s}$ modulo $2$ respectively. Then the table
    \begin{figure}[H]
        \centering
        \begin{table}[H]
\centering
\begin{tabular}{|l|l|l|l|l|l|l|l|l|}
\hline
            ${(j_1,j_2,j_3)}$ & (0,0,0)           & (0,0,1)                 & (0,1,0)           & (0,1,1)                     & (1,0,0)           & (1,1,0)         & (1,0,1)                & (1,1,1)                \\ \hline
$S(l,s)$     & $\frac{4p-s}{2}$  & $p + l + \frac{s+1}{2}$ & $p + \frac{s}{2}$ & $2p - \frac{2l + s + 1}{2}$ & $p - \frac{s}{2}$ & $\frac{s}{2}$   & $\frac{2l + s + 1}{2}$ & $p - \frac{2l+s+1}{2}$ \\ \hline
${L_0(l,s)}$ & $p - \frac{s}{2}$ & $\frac{2l+s+1}{2}$      & $\frac{s}{2}$     & $p - \frac{2l+s+1}{2}$      & $p - \frac{s}{2}$ & $\frac{s}{2}$   & $\frac{2l + s + 1}{2}$ & $p - \frac{2l+s+1}{2}$ \\ \hline
\end{tabular}
\end{table}
    \end{figure}
    proves the claim. This completes the proof for ${i=0}$. If ${i=1}$, then note that for ${1\leq s\leq (p-1)/2}$,
    $$
    {
        \dim_{\mathbb{F}}M(1,0,s,\epsilon) \equiv
        \sum_{j=1}^{s}
        \left\{
            \begin{array}{ll}
                 j&\text{if }j\text{ even}  \\
                 p-j&\text{if }j\text{ odd} 
            \end{array}
        \right.
        \equiv
        -\dim_{\mathbb{F}}M(0,0,s,\epsilon)
        \equiv
        -L_0(0,s) \Modwb{p}
    }
    $$
     from which it follows that ${\dim_{\mathbb{F}}M(1,l,s,\epsilon)\equiv -L_0(l,s)\equiv L_1(l,s)\Modwb{p}}$, giving us the result in the Proposition statement.
\end{proof}
\begin{thm}
    \label{green correspondence G}
    Let ${1\leq b\leq p-1}$, ${0\leq a\leq p-2}$ and ${i \in \{0,1\}}$ such that ${a \equiv i\Modwb{2}}$. If ${a \in [0,1]}$, then
    $$
    {
        V_{a,b}
        =
        \left\{
            \begin{array}{ll}
                 M(i,0,2b + i - 1,2i-1)&\text{if }b \in \left[1,\frac{p-1}{2}\right]  \\
                 M(i,0,2(p-b) - i,2i-1)&\text{if }b \in \left[\frac{p+1}{2},p-1\right] 
            \end{array}
        \right. .
    }
    $$
    If ${a \in \left[2,\frac{p-1}{2}\right]}$, then
    $$
    {
        V_{a,b} =
        \left\{
            \begin{array}{ll}
                 M(i,a-1,2b,1)&\text{if }b \in \left[1,\frac{p-a}{2}\right]  \\
                 M(i,a-1,2(p-a-b)+1,1)&\text{if }b \in \left[\frac{p-a+1}{2},p-a\right] \\
                 M(i,2(p-b) - a - 1, 2(a+b-p)+1,-1)&\text{if }b \in \left[p-a, p - \frac{a+1}{2}\right] \\
                 M(i,a + 2(b-p), 2(p-b), 1)&\text{if }b \in \left[p - \frac{a}{2}, p-1\right]
            \end{array}
        \right. .
    }
    $$
    Finally, if ${a \in \left[\frac{p+1}{2},p-2\right]}$, then
    $$
    {
        V_{a,b} =
        \left\{
            \begin{array}{ll}
                 M(i,p-a-2b,2b,-1)&\text{if }b \in \left[1,\frac{p-a}{2}\right]  \\
                 M(i,a+2b-p-1,2(p-a-b)+1,1)&\text{if }b \in \left[\frac{p-a+1}{2},p-a\right] \\
                 M(i,p-1-a,2(a+b-p)+1,-1)&\text{if }b \in \left[p-a, p - \frac{a+1}{2}\right] \\
                 M(i,p-1-a,2(p-b),-1)&\text{if }b \in \left[p - \frac{a}{2}, p-1\right]
            \end{array}
        \right. .
    }
    $$
\end{thm}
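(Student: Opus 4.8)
The plan is to pin down the position of $U_{a,b}$ in the Stable A.R. quiver $\Gamma^s_{\littleb_i}$, transport this across the isomorphism $\Gamma^s_{\littleb_i}\cong\Gamma^s_{\mathcal{B}_i}$ induced by the Green correspondence, and then identify which $M(i,l,s,\epsilon)$ occupies the resulting position.

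First I would record that, by Corollary \ref{stable a.r. quivers for F[B]}, $U_{a,b}$ occupies the $b$-th vertex of the column indexed by $a$ in the tube of Figure \ref{fig:quiver-of-b_i}, and that by Corollary \ref{maximal directed paths for F[B]} one of its two maximal directed paths runs straight up this column to $U_{a,p-1}$. Since the boundary modules of any $U_{a',b'}$ are $U_{a',p-1}$ and the one-dimensional module $S_{a'+2(b'-1)}$, the module $U_{a,p-1}$ is a boundary module of $U_{a',b'}$ if and only if $a'=a$; equivalently, the $\mathbb{F}[B]$-modules having $U_{a,p-1}$ as a boundary module are precisely $U_{a,1},\dots,U_{a,p-1}$. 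Because the Green correspondence induces an isomorphism of the tubes commuting with $\Omega^2$, it sends maximal directed paths to maximal directed paths and hence boundary modules to boundary modules (Remark \ref{maximal directed path remark}); so $V_{a,p-1}$ is a boundary module of $V_{a,b}$, and the non-projective indecomposables of $\mathcal{B}_i$ having $V_{a,p-1}$ as a boundary module are exactly $V_{a,1},\dots,V_{a,p-1}$.

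Next I would use dimensions to single out $V_{a,b}$ among these. As $\Res^G_B(V_{a,b})\cong U_{a,b}\oplus Q$ with $Q$ projective, we get $\dim_\mathbb{F}V_{a,b}\equiv\dim_\mathbb{F}U_{a,b}=b\pmod p$, and since $1\le b\le p-1$ this residue determines $b$. Hence $V_{a,b}$ is the \emph{unique} non-projective indecomposable $\mathbb{F}[G]$-module in $\mathcal{B}_i$ that has $V_{a,p-1}$ as a boundary module and has dimension $\equiv b\pmod p$. It therefore suffices, for each range of $a$ and sub-range of $b$ in the statement, to check that the proposed module $M=M(i,l,s,\epsilon)$ has these two properties: that one of its two boundary modules, computed from $(l,s,\epsilon)$ via Corollary \ref{boundaries of F[G] modules}, equals $V_{a,p-1}$ (which is given explicitly by Corollary \ref{small correspondence} for $a$ odd and by Corollary \ref{small correspondence extended} for $a$ even), and that $L_i(l,s)\equiv b\pmod p$, the latter being equivalent to $\dim_\mathbb{F}M\equiv b\pmod p$ by Proposition \ref{dimension of F[G] module modulo p}. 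One also checks in passing that each proposed $(l,s)$ is a legitimate parameter, i.e. $0\le l\le(p-3)/2$, $1\le s\le p-1$ and $l+s\le p-1$.

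The main obstacle is the bookkeeping in this last step. The statement divides $a$ into three ranges and $b$ into up to four sub-ranges, and within each the boundary computation of Corollary \ref{boundaries of F[G] modules} branches further according to whether $l=0$ or $l>0$, $s=1$ or $s>1$, $\epsilon=\pm1$, $l+s\le(p-1)/2$ or $l+s>(p-1)/2$, and $\delta=\pm1$, while the value $L_i(l,s)$ depends on the parities of $l$ and $s$ --- which must be matched against $i\equiv a\pmod 2$ and the chosen sub-range of $b$. Throughout, the Brauer-tree indices have to be reduced modulo $p-1$ and the dimensions modulo $p$. No individual verification is hard, but the case analysis must be carried through consistently; once it is, uniqueness from the previous paragraph yields $M(i,l,s,\epsilon)=V_{a,b}$ in every case.
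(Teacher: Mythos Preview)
Your proposal is correct and follows essentially the same approach as the paper: both hinge on the observation that $V_{a,b}$ is the unique non-projective indecomposable $\mathcal{B}_i$-module with $V_{a,p-1}$ as a boundary module and dimension $\equiv b\pmod p$, and both then use Corollaries~\ref{small correspondence}, \ref{small correspondence extended}, \ref{boundaries of F[G] modules} together with Proposition~\ref{dimension of F[G] module modulo p} to carry out the case analysis. The only cosmetic difference is that the paper works forward---listing all $M(i,l,s,\epsilon)$ with the correct boundary and then solving $b=L_i(l,s)$ for $s$---whereas you propose verifying the stated formulas directly; since the theorem already supplies the answer, these are equivalent.
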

\begin{proof}
    At the core of this proof is the following observation:
    \begin{itemize}
        \item The module ${U_{a,b}}$ is the unique ${\mathbb{F}[B]}$ module of dimension $b$ with ${U_{a,p-1}}$ as one of its boundary modules. It therefore follows from Proposition \ref{dimension of F[G] module modulo p} that ${V_{a,b}}$ is the unique non-projective indecomposable ${\mathbb{F}[G]}$ module whose dimension modulo $p$ is $b$, and who has ${V_{a,p-1}}$ as a boundary module.
    \end{itemize}
    First of all, suppose ${a=0}$. Then, ${V_{0,b}}$ is the unique ${\mathcal{B}_0}$ module with ${V_{0,p-1} = M(0,0,2,-1)}$ (cor. \ref{small correspondence extended}) as a boundary module and dimension ${b}$ modulo $p$. By using Corollary \ref{boundaries of F[G] modules}, all the modules with ${V_{0,p-1}}$ as a boundary module are of the form ${M(0,0,s,-1)}$ with ${s \in [1,p-1]}$. Therefore,
    $$
    {
        b = L_0(0,s)
        =
        \left\{
            \begin{array}{ll}
                 \frac{s+1}{2}&\text{if }s\text{ odd}  \\
                 p - \frac{s}{2}&\text{if }s\text{ even} 
            \end{array}
        \right. .
    }
    $$
    Rearranging for ${s}$ and adding the condition ${s \in [1,p-1]}$, we obtain
    $$
    {
        s
        =
        \left\{
            \begin{array}{ll}
                 2b-1&\text{if }b \in \left[1,\frac{p-1}{2}\right]  \\
                 2(p-b)&\text{if }b \in \left[\frac{p+1}{2},p-1\right] 
            \end{array}
        \right. .
    }
    $$
    Overall,
    $$
    {
        V_{0,b}
        =
        \left\{
            \begin{array}{ll}
                 M(0,0,2b-1,-1)&\text{if }b \in \left[1,\frac{p-1}{2}\right]  \\
                 M(0,0,2(p-b),-1)&\text{if }b \in \left[\frac{p+1}{2},p-1\right] 
            \end{array}
        \right. .
    }
    $$
    Now suppose that ${a=1}$. Then, ${V_{1,b}}$ is the unique ${\mathcal{B}_1}$ module with ${V_{1,p-1} = M(1,0,1,-1)}$  (cor. \ref{small correspondence}) as a boundary module and dimension $b$ modulo $p$. Using Corollary \ref{boundaries of F[G] modules}, all modules with ${V_{1,p-1}}$ as a boundary module are of the form ${M(1,0,s,1)}$ for ${s \in [1,p-1]}$. Therefore,
    $$
    {
        b = L_1(0,s)
        =
        \left\{
            \begin{array}{ll}
                 p - \frac{s+1}{2}&\text{if }s\text{ odd}  \\
                 \frac{s}{2}&\text{if }s\text{ even} 
            \end{array}
        \right. .
    }
    $$
    Rearranging for ${s}$ and adding the condition ${s \in [1,p-1]}$, we obtain
    $$
    {
        s
        =
        \left\{
            \begin{array}{ll}
                 2(p-b)-1&\text{if }b \in \left[\frac{p+1}{2},p-1\right]  \\
                 2b&\text{if }b \in \left[1,\frac{p-1}{2}\right] 
            \end{array}
        \right. .
    }
    $$
    Overall,
    $$
    {
        V_{1,b}
        =
        \left\{
            \begin{array}{ll}
                 M(1,0,2b,1)&\text{if }b \in \left[1,\frac{p-1}{2}\right]  \\
                 M(1,0,2(p-b)-1,1)&\text{if }b \in \left[\frac{p+1}{2},p-1\right] 
            \end{array}
        \right. .
    }
    $$
    For ${a \in [0,1]}$, we can write this in shorthand as
    $$
    {
        V_{a,b}
        =
        \left\{
            \begin{array}{ll}
                 M(i,0,2b + i - 1,2i-1)&\text{if }b \in \left[1,\frac{p-1}{2}\right]  \\
                 M(i,0,2(p-b) - i,2i-1)&\text{if }b \in \left[\frac{p+1}{2},p-1\right] 
            \end{array}
        \right. .
    }
    $$
    Suppose now that ${a \in \left[2,\frac{p-1}{2}\right]}$. Then ${V_{a,b}}$ is the unique ${\mathcal{B}_i}$ module with ${V_{a,p-1} = M(i,a-2,2,1)}$ (see corollaries \ref{small correspondence}, \ref{small correspondence extended}) as a boundary module and dimension $b$ modulo $p$. Using Corollary \ref{boundaries of F[G] modules}, we get ${M(i,a-s,s,\epsilon)}$, where ${\epsilon}$ is chosen so that ${\delta = -1}$ (as in (\ref{delta definition})) and ${s \in [1,a]}$ have ${V_{a,p-1} = M(i,a-2,2,1)}$ as a boundary module on the right. Therefore,
    $$
    {
        b=
        L_i(a-s,s)=
        \left\{
            \begin{array}{ll}
                 p - \frac{2(a-s) + s + 1}{2}&\text{if }s\text{ odd, }i=0  \\
                 p - \frac{2(a-s) + s + 1}{2}&\text{if }s\text{ odd, }i=1  \\
                 p - \frac{s}{2}&\text{if }s\text{ even, }i=0 \\
                 p - \frac{s}{2}&\text{if }s\text{ even, }i=1
            \end{array}
        \right. .
    }
    $$
    Note this is independent of $i$. Rearranging for ${s}$ and adding in the constraint ${s \in [1,a]}$, we obtain
    $$
    {
        s
        =
        \left\{
            \begin{array}{ll}
                 2(a+b-p)+1&\text{if }b \in \left[p-a, p - \frac{a+1}{2}\right]  \\
                 2(p-b)&\text{if }b \in \left[p - \frac{a}{2}, p - 1\right] 
            \end{array}
        \right. .
    }
    $$
    In order to preserve ${\delta = -1}$, we require ${\epsilon = 1}$ if ${s}$ is even and ${\epsilon = -1}$ if $s$ is odd. Next, using Corollary \ref{boundaries of F[G] modules} once again, note modules of the form ${M(i,a-1,s,1)}$ for ${s \in [1,p-a]}$ have ${V_{a,p-1} = M(i,a-2,2,1)}$ as a boundary module on the left. Therefore,
    $$
    {
        b = L_i(a-1,s)
        =
        \left\{
            \begin{array}{ll}
                 p - \frac{2(a-1) + s + 1}{2}&\text{if }s\text{ odd, }i=0  \\
                 p - \frac{2(a-1) + s + 1}{2}&\text{if }s\text{ odd, }i=1  \\
                 \frac{s}{2}&\text{if }s\text{ even, }i=0 \\
                 \frac{s}{2}&\text{if }s\text{ even, }i=1
            \end{array}
        \right. .
    }
    $$
    Once again, this is independent of $i$. Rearranging for ${s}$ and adding the constraint that ${s \in [1,p-a]}$, we obtain
    $$
    {
        s
        =
        \left\{
            \begin{array}{ll}
                 2(p-a-b) + 1&\text{if }b \in \left[\frac{p-a+1}{2},p-a\right] \\
                 2b&\text{if }b \in \left[1,\frac{p-a}{2}\right]
            \end{array}
        \right. .
    }
    $$
    Overall, for ${a \in \left[2,\frac{p-1}{2}\right]}$ we obtain
    $$
    {
        V_{a,b} =
        \left\{
            \begin{array}{ll}
                 M(i,a-1,2b,1)&\text{if }b \in \left[1,\frac{p-a}{2}\right]  \\
                 M(i,a-1,2(p-a-b)+1,1)&\text{if }b \in \left[\frac{p-a+1}{2},p-a\right] \\
                 M(i,2(p-b) - a - 1, 2(a+b-p)+1,-1)&\text{if }b \in \left[p-a, p - \frac{a+1}{2}\right] \\
                 M(i,a + 2(b-p), 2(p-b), 1)&\text{if }b \in \left[p - \frac{a}{2}, p-1\right]
            \end{array}
        \right. .
    }
    $$
    Now we consider ${a \in \left[\frac{p+1}{2},p-2\right]}$. In this case, ${V_{a,b}}$ is the unique ${\mathcal{B}_i}$ module with ${V_{a,p-1} = M(i,p-1-a,2,-1)}$ (see corollaries \ref{small correspondence}, \ref{small correspondence extended}) as a boundary module and dimension ${b}$ modulo $p$. By Corollary \ref{boundaries of F[G] modules}, note that modules of the form ${M(i,p-a-s,s,\epsilon)}$, where ${s \in [1,p-a]}$ and ${\epsilon}$ is chosen so that ${\delta = 1}$ have ${M(i,p-1-a,2,-1)}$ as a right boundary module. Therefore,
    $$
    {
        b = L_i(p-a-s,s)
        =
        \left\{
            \begin{array}{ll}
                 \frac{2(p-a-s) + s + 1}{2}&\text{if }s\text{ odd, }i=0  \\
                 \frac{2(p-a-s) + s + 1}{2}&\text{if }s\text{ odd, }i=1  \\
                 \frac{s}{2}&\text{if }s\text{ even, }i=0 \\
                 \frac{s}{2}&\text{if }s\text{ even, }i=1
            \end{array}
        \right. .
    }
    $$
    Once again, this is independent of $i$. Rearranging for ${s}$ and adding the constraint that ${s \in [1,p-a]}$, we obtain
    $$
    {
        s
        =
        \left\{
            \begin{array}{ll}
                 2(p-a-b) + 1&\text{if }b \in \left[\frac{p-a+1}{2},p-a\right] \\
                 2b&\text{if }b \in \left[1,\frac{p-a}{2}\right]
            \end{array}
        \right. .
    }
    $$
    To preserve ${\delta = 1}$, we need ${\epsilon = 1}$ if ${s}$ odd, ${\epsilon = -1}$ if $s$ even. Now we cycle through the walks for which ${M(i,p-1-a,2,-1)}$ is a left boundary. Next, note by Corollary \ref{boundaries of F[G] modules} modules of the form
    ${M(i,p-1-a,s,-1)}$ for ${s \in [1,a]}$ also have ${M(i,p-1-a,2,-1)}$ as a a boundary module on the left. Therefore,
    $$
    {
        b = L_i(p-1-a,s)
        =
        \left\{
            \begin{array}{ll}
                 \frac{2(p-1-a) + s + 1}{2}&\text{if }s\text{ odd, }i=0  \\
                 \frac{2(p-1-a) + s + 1}{2}&\text{if }s\text{ odd, }i=1  \\
                 p - \frac{s}{2}&\text{if }s\text{ even, }i=0 \\
                 p-\frac{s}{2}&\text{if }s\text{ even, }i=1
            \end{array}
        \right. .
    }
    $$
    Once again, this is independent of $i$. Rearranging for ${s}$ and adding the constraint that ${s \in [1,a]}$, we obtain
    $$
    {
        s
        =
        \left\{
            \begin{array}{ll}
                 2(a + b - p ) + 1&\text{if }b \in \left[p-a,p - \frac{a+1}{2}\right] \\
                 2(p-b)&\text{if }b \in \left[p-\frac{a}{2},p-1\right]
            \end{array}
        \right. .
    }
    $$
    Finally, overall for ${a \in \left[\frac{p+1}{2},p-2\right]}$ we obtain
    $$
    {
        V_{a,b} =
        \left\{
            \begin{array}{ll}
                 M(i,p-a-2b,2b,-1)&\text{if }b \in \left[1,\frac{p-a}{2}\right]  \\
                 M(i,a+2b-p-1,2(p-a-b)+1,1)&\text{if }b \in \left[\frac{p-a+1}{2},p-a\right] \\
                 M(i,p-1-a,2(a+b-p)+1,-1)&\text{if }b \in \left[p-a, p - \frac{a+1}{2}\right] \\
                 M(i,p-1-a,2(p-b),-1)&\text{if }b \in \left[p - \frac{a}{2}, p-1\right]
            \end{array}
        \right. .
    }
    $$
\end{proof}
To finish fully describing the Green correspondence bijection, we now take a non-projective ${\mathbb{F}[G]}$ module ${M(i,l,s,\epsilon)}$ and compute its Green correspondent.
\begin{prop}
    \label{green correspondent of M(i,l,s,epsilon)}
    Let ${M = M(i,l,s,\epsilon)}$ be a non-projective indecomposable ${\mathbb{F}[G]}$ module. If ${s=1}$, then the Green correspondent of $M$ is given by ${U_{p-L_{i}(l,s),L_i(l,s)}}$ if ${M\neq V_1}$ and ${U_{0,1}}$ if ${M=V_1}$. If ${s\geq 2}$, define the following:
    \begin{itemize}
        \item \texttt{iflag}: this is simply ${i}$, which is $0$ or $1$.
        \item \texttt{lflag}: this is $0$ when ${l=0}$, ${1}$ when ${l}$ is odd, and ${2}$ when ${l>0}$ even.
        \item \texttt{sflag}: this is $0$ when ${s}$ is even, ${1}$ when ${s}$ is odd.
        \item \texttt{eflag}: this is just ${\epsilon}$.
        \item \texttt{sumflag}: this is $0$ when ${l+s < (p-1)/2}$, $1$ when ${(p-1)/2 < l+s < p-1}$, and $2$ when ${l+s=p-1}$.
    \end{itemize}
    Then the Green correspondent of $M$ is given by ${U_{a,L_i(l,s)}}$, where $a$ can be found on the following table:
    \begin{figure}[H]
        \begin{table}[H]
\begin{tabular}{|c|c|c|}
\hline
(\texttt{iflag}, \texttt{lflag}, \texttt{sflag}, \texttt{eflag}, \texttt{sumflag})                                                                               & ${V_{a,p-1}}$ boundary          & $a$          \\ \hline
(0, 0, *, -1, 0), (0, 0, *, -1, 1), (0, 0, 0, *, 2)                                                                                                        & ${M(0, 0, 2, -1)}$              & $0$          \\ \hline
(0, 0, 0, 1, 0), (1, 0, 1, -1, 0)                                                                                                                                & ${M(i, s - 2, 2, 1)}$           & $s$          \\ \hline
(0, 0, 0, 1, 1), (1, 0, 1, -1, 1)                                                                                                                                & ${M(i, p - s - 1, 2, -1)}$      & $s$          \\ \hline
(0, 0, 1, 1, 0), (1, 0, 0, -1, 0)                                                                                                                                & $M(i, s - 1, 2, -1)$            & $p - s$      \\ \hline
(0, 0, 1, 1, 1), (1, 0, 0, -1, 1)                                                                                                                                & ${M(i, p - s - 2, 2, 1)}$       & $p - s$      \\ \hline
(0, 1, 0, -1, 0), (0, 2, 1, 1, 0), (1, 1, 1, 1, 0), (1, 2, 0, -1, 0)                                                                                             & ${M(i, l + s - 1, 2, -1)}$      & $-l + p - s$ \\ \hline
(0, 1, 0, -1, 1), (0, 2, 1, 1, 1), (1, 1, 1, 1, 1), (1, 2, 0, -1, 1)                                                                                             & ${M(i, -l + p - s - 2, 2, 1)}$  & $-l + p - s$ \\ \hline
\begin{tabular}[c]{@{}c@{}}(0, 1, *, 1, 0), (0, 1, *, 1, 1), (0, 1, 1, 1, 2), (1, 2, *, 1, 0), \\ (1, 2, *, 1, 1), (1, 2, 0, 1, 2)\end{tabular}          & ${M(i, l - 1, 2, 1)}$           & $l + 1$      \\ \hline
(0, 1, 1, -1, 0), (0, 2, 0, 1, 0), (1, 1, 0, 1, 0), (1, 2, 1, -1, 0)                                                                                             & ${M(i, l + s - 2, 2, 1)}$       & $l + s$      \\ \hline
(0, 1, 1, -1, 1), (0, 2, 0, 1, 1), (1, 1, 0, 1, 1), (1, 2, 1, -1, 1)                                                                                             & ${M(i, -l + p - s - 1, 2, -1)}$ & $l + s$      \\ \hline
\begin{tabular}[c]{@{}c@{}}(*, 1, 1, -1, 2), (0, 2, *, -1, 0), (0, 2, *, -1, 1), (0, 2, 0, *, 2), \\ (1, 1, *, -1, 0), (1, 1, *, -1, 1)\end{tabular} & ${M(i, l, 2, -1)}$              & $-l + p - 1$ \\ \hline
(1, 0, 0, *, 2), (1, 0, *, 1, 0), (1, 0, *, 1, 1), (1, 1, 1, 1, 2), (1, 2, 0, -1, 2)                                                                       & ${M(1, 0, 1, -1)}$              & $1$          \\ \hline
\end{tabular}
\end{table}
    \end{figure}
    where `*' means any value can be taken.
\end{prop}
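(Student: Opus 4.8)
The plan is to combine three ingredients already established in the paper: first, the explicit formula for $V_{a,b}$ in terms of walks $M(i,l,s,\epsilon)$ (Theorem~\ref{green correspondence G}); second, the key observation from that theorem's proof that $U_{a,b}$ is characterized as the unique $\mathbb{F}[B]$ module of dimension $b$ having $U_{a,p-1}$ as a boundary module, equivalently $V_{a,b}$ is the unique non-projective indecomposable $\mathbb{F}[G]$ module of dimension $b \pmod p$ having $V_{a,p-1}$ as a boundary module; and third, the dimension-modulo-$p$ formula $\dim_{\mathbb{F}} M(i,l,s,\epsilon) \equiv L_i(l,s) \pmod p$ (Proposition~\ref{dimension of F[G] module modulo p}). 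The Green correspondent of $M = M(i,l,s,\epsilon)$ is the $U_{a,b}$ whose Green correspondent is $M$; by the uniqueness observation, such a $U_{a,b}$ must satisfy $b \equiv \dim_{\mathbb{F}} M \equiv L_i(l,s) \pmod p$, which forces $b = L_i(l,s)$ since $1 \leq L_i(l,s) \leq p-1$ and $1 \leq b \leq p-1$. This pins down the second index immediately, so the entire content of the proposition is the determination of $a$.

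To find $a$, I would proceed as follows. The case $s = 1$ is handled directly: $M$ is then a simple $\mathbb{F}[G]$ module, and by Lemma~\ref{restriction of simple F[G] modules} we have $\Res^G_B(V_t) = U_{p-t,t}$, so the Green correspondent of the simple module of dimension $t = L_i(l,1)$ is $U_{p-L_i(l,1),L_i(l,1)}$, with the sole exception $V_1$ (dimension $1$, not $p$) whose Green correspondent is $U_{0,1} = S_0$; this is exactly what the statement claims. For $s \geq 2$, the strategy is to determine which boundary module $V_{a,p-1}$ the module $M$ has, and then read off $a$ from the known correspondence $U_{a,p-1} \mapsto V_{a,p-1}$ recorded in Corollaries~\ref{small correspondence} and~\ref{small correspondence extended}. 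The boundary modules of $M$ are computed by Corollary~\ref{boundaries of F[G] modules}: the left boundary $H$ and the right boundary $H'$ are each of the form $M(i,l',2,\epsilon')$ (or a simple module $M(i,0,1,-1)$), i.e.\ they are hooks. Since the Green correspondence is an isomorphism $\Gamma^s_{\mathcal{B}_i} \cong \Gamma^s_{\littleb_i}$, one of the two boundary modules of $V_{a,b}$ on the $\mathbb{F}[B]$ side is $U_{a,p-1}$; tracing this across the isomorphism, one of $H, H'$ must be the hook $V_{a,p-1}$, and comparing with the list $V_{a,p-1} = M(i,a-2,2,1)$ (for $a \in [2,(p-1)/2]$), $M(i,p-1-a,2,-1)$ (for $a \in [(p+1)/2,p-2]$), $M(i,0,2,-1)$ (for $a=0$), $M(i,0,1,-1)$ (for $a=1$) then determines $a$.

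The execution is therefore a large but mechanical case analysis, and the purpose of the \texttt{iflag}, \texttt{lflag}, \texttt{sflag}, \texttt{eflag}, \texttt{sumflag} encoding is precisely to organize it. For each combination of flags, Corollary~\ref{boundaries of F[G] modules} tells us the two boundary hooks of $M(i,l,s,\epsilon)$ in terms of $l,s,\epsilon$ (the parity flags \texttt{sflag} and \texttt{eflag} control the value of $\delta$ in~(\ref{delta definition}), and \texttt{sumflag} distinguishes the three regimes $l+s < (p-1)/2$, $l+s > (p-1)/2$, $l+s = p-1$ that branch the formula for $H'$); \texttt{lflag} distinguishes $l=0$ from $l$ odd from $l$ even, which branches the formula for $H$. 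Exactly one of these hooks is of the "distinguished" form $V_{a,p-1}$ (i.e.\ $M(i,a-2,2,1)$ with $a \leq (p-1)/2$, or $M(i,p-1-a,2,-1)$ with $a \geq (p+1)/2$, or the degenerate $M(i,0,2,-1)$, $M(i,0,1,-1)$); for the other hook, one checks it is \emph{not} of this form, using $l + s \leq p-1$. Solving the resulting linear equation for $a$ in each row yields the entries in the final column, and the middle column records which hook was selected. I would present this as: "for each row of the table, apply Corollary~\ref{boundaries of F[G] modules} with the indicated parities to obtain the two boundary modules; identify the one lying on a boundary also containing some $V_{a,p-1}$; solve for $a$." The main obstacle is not conceptual but bookkeeping: one must verify that the row labels partition all legal quintuples $(i,\texttt{lflag},\texttt{sflag},\epsilon,\texttt{sumflag})$ (subject to $l+s \leq p-1$ and the parity constraints), that the merged rows (those with `*' or with several flag-tuples) genuinely give the same answer $a$, and that the boundary of the non-selected hook never accidentally also contains a $V_{a',p-1}$ with a \emph{different} $a'$ — this last point follows because each boundary of $\Gamma^s_{\mathcal{B}_i}$ contains exactly one simple module among its $e = (p-1)/2$ cohooks (Remark~\ref{hook cohook remark}, Corollary~\ref{boundaries of Gamma_s(B_i)}), so at most one $V_{a,p-1}$ (equivalently one cohook which is the $\Omega$-orbit endpoint matching $U_{a,p-1}$) sits on each boundary. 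Thus the selected hook is unambiguous, and the table is exactly the catalogue of the outcomes.
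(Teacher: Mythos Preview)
Your proposal is correct and follows essentially the same approach as the paper: determine $b = L_i(l,s)$ from Proposition~\ref{dimension of F[G] module modulo p}, handle $s=1$ via Lemma~\ref{restriction of simple F[G] modules}, and for $s\geq 2$ compute the boundary hooks of $M$ via Corollary~\ref{boundaries of F[G] modules} and match against the list of $V_{a,p-1}$ from Corollaries~\ref{small correspondence} and~\ref{small correspondence extended} to read off $a$. The only difference is that the paper delegates the resulting 60-case enumeration to symbolic computation (with code cited), whereas you sketch how one would organise it by hand; your closing uniqueness remark is slightly imprecise (the point is simply that one boundary of the tube consists of the modules $V_{a,p-1}$ and the other of the modules $V_{a',1}$, so exactly one of the two boundary hooks of $M$ is a $V_{a,p-1}$), but this does not affect correctness.
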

\begin{proof}
    Proposition \ref{dimension of F[G] module modulo p} tells us that the Green correspondent of ${M(i,l,s,\epsilon)}$ has dimension ${L_i(l,s)}$. The case of ${s=1}$ is given by Proposition \ref{restriction of simple F[G] modules}. The remaining cases for ${s > 1}$ can be enumerated using Corollaries \ref{boundaries of F[G] modules}, \ref{small correspondence}, \ref{small correspondence extended} and the variables (\texttt{iflag}, \texttt{lflag}, \texttt{sflag}, \texttt{eflag}, \texttt{sumflag}). Due to the large number of cases (of which there are 60), these were enumerated and simplified using symbolic programming. The code can be found at: \cite{symbolic-proof}.
\end{proof}
We finish this section with two Corollaries.
 \begin{cor}
    \label{Ind_B^G(S_a) composition factors}
    Let ${0\leq a\leq p-2}$. The composition factors of ${\Ind_B^G(S_a)}$ are ${V_{a+1},V_{p-a}}$. 
\end{cor}
\begin{proof}
    Let ${i \in [0,1]}$ such that ${i\equiv a\Modwb{2}}$. Provided that ${a\neq 0}$, it is easily verified using Theorem \ref{green correspondence G} that the Green correspondent ${V_{a,1}}$ of ${S_a = U_{a,1}}$ is given by
    $$
    {
        V_{a,1}
        =
        \left\{
            \begin{array}{ll}
                 M(1,0,2,1)&\text{if }a=1 \\
                 M(i,a-1,2,1)&\text{if }a \in \left[2,\frac{p-1}{2}\right] \\
                 M(i,p-a-2,2,-1)&\text{if }a \in \left[\frac{p+1}{2},p-2\right]
            \end{array}
        \right. .
    }
    $$
    In each of these cases, by looking at the Brauer tree, we find ${V_{a,1}}$ is the unique uniserial module of length $2$ with socle ${V_{a+1}}$ and top ${V_{p-a}}$. Note that this also tells us ${V_{a,1}}$ has dimension ${p+1}$, which is the same as the dimension of ${\Ind^G_B(S_a)}$. Since ${V_{a,1}}$ occurs as a summand of ${\Ind^G_B(S_a)}$, we have ${\Ind^G_B(S_a) = V_{a,1}}$ and we are done in this case.\\
    \\
    If ${a=0}$, we cannot use this method. This is because ${V_{0,1} = V_1}$ has dimension $1$, and hence ${\Ind^G_B(S_0) \cong V_1 \oplus P}$ for some projective $P$ of dimension $p$. We will indirectly show that ${P\cong V_p}$. Using \cite[\S \RN{3}.5, cor. 4]{localrep}, Proposition \ref{parameterisation of F[B] modules}, we obtain
    $$
    {
        \mathbb{F}[B] \cong \bigoplus_{a=0}^{p-2}U_{a,p}\text{ as }\mathbb{F}[B]\text{ modules, }\mathbb{F}[G] \cong \bigoplus_{i=1}^{p}P_{V_i}^{\oplus i}\text{ as }\mathbb{F}[G]\text{ modules.}
    }
    $$
    By inducing, we obtain
    $$
    {
        \bigoplus_{a=0}^{p-2}\Ind_B^G(U_{a,p}) \cong \Ind^G_B(\mathbb{F}[B]) \cong \mathbb{F}[G] \cong \bigoplus_{i=1}^{p}P_{V_i}^{\oplus i}.
    }
    $$
    Using Proposition \ref{parameterisation of F[B] modules}, it's not difficult to see that for ${0\leq a\leq p-2}$, the composition factors of ${U_{a,p}}$ are as follows: ${S_a}$ occurs $3$ times, and ${S_d}$ for ${0\leq d\leq p-2}$, ${d\neq a}$, ${d\equiv a\Modwb{2}}$ occurs twice. Passing to the Grothendieck group, we obtain
    $$
    {
        \sum_{a=0}^{p-2}\left(2\frac{p-3}{2} + 3\right)\left[\Ind^G_B(S_a)\right]
        =
        p
        \sum_{a=0}^{p-2}\left[\Ind^G_B(S_a)\right]
        =
        \sum_{i=1}^{p}i\left[P_{V_i}\right].
    }
    $$
    Note that ${V_p}$ does not occur as a composition factor of ${\Ind^G_B(S_a)}$ for ${1\leq a\leq p-2}$, but that ${[V_p]}$ occurs $p$ times on the right-hand side of the above sum. Hence, we are forced to conclude ${V_p}$ occurs as a composition factor of ${\Ind^G_B(S_0)}$, and we are done.
\end{proof}
For ${0\leq a\leq p-2}$, ${1\leq b\leq p-1}$ and ${1\leq t\leq p-1}$, let ${c_{a,b,t}}$ be the multiplicity of ${V_t}$ as a composition factor of ${V_{a,b}}$. Define
\begin{equation}
    \label{I(l,s,t) definition}
    \mathcal{I}(l,s,t)
    =
    \bm{1}_{[l+1,l+s]}(t)
    =
    \left\{
        \begin{array}{ll}
             1&\text{if }t \in [l+1,l+s]  \\
             0&\text{otherwise} 
        \end{array}
    \right.
\end{equation}
and
\begin{equation}
    \label{Ibar(l,j) definition}
    \overline{\mathcal{I}}(l,t)
    =
    \mathcal{I}(l,p-1-l,t)
    =
    \bm{1}_{[l+1,p-1]}(t)
    =
    \left\{
        \begin{array}{ll}
             1&\text{if }t \in [l+1,p-1]  \\
             0&\text{otherwise} 
        \end{array}
    \right. .
\end{equation}
Next, let ${(i,l,s,\epsilon)}$ be such that ${V_{a,b} = M(i,l,s,\epsilon)}$ as in Theorem \ref{green correspondence G}. Then we define
\begin{equation}
    \label{c_{a,b}(j) definition}
    c_{a,b}(t) =
    \mathcal{I}(l,s,t)
    +
    \overline{\mathcal{I}}(p-1-l-s,t)
    =
    \mathcal{I}(l,s,t)
    +
    \mathcal{I}(p-1-l-s,l+s,t).
\end{equation}
\begin{cor}
    \label{c_{a,b}(t) explicit description and c_{a,b,t}}
    Let ${1\leq b\leq p-1}$. If ${a \in [0,1]}$, then
    $$
    {
        \begin{array}{ll}
             c_{a,b}(t)=&\ \ 
             \left\{
             \begin{array}{lll}
                  1&\text{if }b \in \left[1,\frac{p-1}{2}\right],&t \in \left[1,2b+a-1\right]  \\
                  1&\text{if }b \in \left[\frac{p+1}{2},p-1\right],&t \in \left[1,2(p-b)-a\right]\\
                  0&\text{otherwise}
             \end{array}
             \right.
             \\
             &\\
             &+\left\{
             \begin{array}{lll}
                  1&\text{if }b \in \left[1,\frac{p-1}{2}\right],&t \in \left[p-a-2b+1,p-1\right]  \\
                  1&\text{if }b \in \left[\frac{p+1}{2},p-1\right],&t \in \left[a+2b-p,p-1\right]\\
                  0&\text{otherwise}
             \end{array}
             \right. .
        \end{array} 
    }
    $$
    Next, if ${a \in \left[2,\frac{p-1}{2}\right]}$, then
    $$
    {
        \begin{array}{ll}
             c_{a,b}(t)=&\ \ 
             \left\{
             \begin{array}{lll}
                  1&\text{if }b \in \left[1,\frac{p-a}{2}\right],&t \in \left[a,a-1+2b\right] \\
                  1&\text{if }b \in \left[\frac{p-a+1}{2},p-a\right],&t \in \left[a,2(p-b)-a\right] \\
                  1&\text{if }b \in \left[p-a,p-\frac{a+1}{2}\right],&t \in \left[2(p-b)-a,a\right]  \\
                  1&\text{if }b \in \left[p-\frac{a}{2},p-1\right],&t \in \left[2(b-p)+a+1,a\right] \\
                  0&\text{otherwise}
             \end{array}
             \right.
             \\
             &\\
             &+\left\{
             \begin{array}{lll}
                  1&\text{if }b \in \left[1,\frac{p-a}{2}\right],&t \in \left[p-a-2b+1,p-1\right]  \\
                  1&\text{if }b \in \left[\frac{p-a+1}{2},p-a\right],&t \in \left[a+2b-p,p-1\right] \\
                  1&\text{if }b \in \left[p-a,p-\frac{a+1}{2}\right],&t \in \left[p-a,p-1\right]  \\
                  1&\text{if }b \in \left[p-\frac{a}{2},p-1\right],&t \in \left[p-a,p-1\right] \\
                  0&\text{otherwise}
             \end{array}
             \right. .
        \end{array} 
    }
    $$
    If ${a \in \left[\frac{p+1}{2},p-2\right]}$, then we have
    $$
    {
        \begin{array}{ll}
             c_{a,b}(t)=&\ \ 
             \left\{
             \begin{array}{lll}
                  1&\text{if }b \in \left[1,\frac{p-a}{2}\right],&t \in \left[p-a-2b+1,p-a\right]  \\
                  1&\text{if }b \in \left[\frac{p-a+1}{2},p-a\right],&t \in \left[a+2b-p,p-a\right] \\
                  1&\text{if }b \in \left[p-a,p-\frac{a+1}{2}\right],&t \in \left[p-a,a+2b-p\right]  \\
                  1&\text{if }b \in \left[p-\frac{a}{2},p-1\right],&t \in \left[p-a,2(p-b)-a-1+p\right] \\
                  0&\text{otherwise}
             \end{array}
             \right.
             \\
             &\\
             &+\left\{
             \begin{array}{lll}
                  1&\text{if }b \in \left[1,\frac{p-a}{2}\right],&t \in \left[a,p-1\right]  \\
                  1&\text{if }b \in \left[\frac{p-a+1}{2},p-a\right],&t \in \left[a,p-1\right] \\
                  1&\text{if }b \in \left[p-a,p-\frac{a+1}{2}\right],&t \in \left[2(p-b)-a,p-1\right]  \\
                  1&\text{if }b \in \left[p-\frac{a}{2},p-1\right],&t \in \left[2(b-p)+a+1,p-1\right] \\
                  0&\text{otherwise}
             \end{array}
             \right. .
        \end{array} 
    }
    $$
    Finally, the constants ${c_{a,b,t}}$ are given as follows:
    $$
    {
        c_{a,b,t}
        =
        \left\{
            \begin{array}{ll}
                 0&\text{if }t \equiv a\Modwb{2}\\
                 c_{a,b}(t)&\text{if }t\not\equiv a\Modwb{2},\ t \in \left[1,\frac{p-1}{2}\right]  \\
                 c_{a,b}(p-t)&\text{if }t\not\equiv a\Modwb{2},\ t \in \left[\frac{p+1}{2},p-1\right] 
            \end{array}
        \right. .
    }
    $$
\end{cor}
\begin{proof}
    Firstly, to see that the ${c_{a,b,t}}$  are given as in the final statement of Corollary \ref{c_{a,b}(t) explicit description and c_{a,b,t}}, first note that if ${t\equiv a\Modwb{2}}$ then ${V_t}$ does not belong to the same block as ${V_{a,b}}$. Hence it cannot show up as a composition factor of ${V_{a,b}}$. Otherwise, suppose ${a,t}$ are different modulo $2$, and suppose ${t \in \left[1,\frac{p-1}{2}\right]}$. Then, ${V_t}$ is represented by the ${t^{\text{th}}}$ edge in the Brauer tree for the block containing ${V_{a,b}}$. To count how many times ${V_t}$ shows up as a composition factor of ${V_{a,b}}$, we simply count how many times it shows up in the walk for ${V_{a,b}}$. It's not very difficult to see that, if ${V_{a,b} = M(i,l,s,\epsilon)}$, then ${\mathcal{I}(l,s,t)}$ counts whether or not we pass the edge for ${V_t}$ for the first portion of the walk, when we are walking towards the exceptional vertex. On the other hand, ${\overline{\mathcal{I}}(p-1-l-s,t)}$ counts whether or not we pass it after looping around the exceptional vertex (i.e. for walks of type (5.2)(b)). If the walk in fact doesn't loop around the exceptional vertex, ${\overline{\mathcal{I}}(p-1-l-s,t)}$ will always return $0$. If ${t \in \left[\frac{p+1}{2},p-1\right]}$, then ${V_t}$ is instead represented by the ${(p-t)^{\text{th}}}$ edge on the Brauer tree for the block containing ${V_{a,b}}$, and the same reasoning holds.\\
    \\
    Suppose ${a \in [0,1]}$, then using Theorem \ref{green correspondence G},
    $$
    {
        \begin{array}{ll}
             c_{a,b}(t)=&\ \ \ 
             \left\{
             \begin{array}{ll}
                  \mathcal{I}(0,2b+a-1,t)&\text{if }b \in \left[1,\frac{p-1}{2}\right]  \\
                  \mathcal{I}(0,2(p-b)-a,t)&\text{if }b \in \left[\frac{p+1}{2},p-1\right] 
             \end{array}
             \right.
             \\
             &\\
             &+\left\{
             \begin{array}{ll}
                  \overline{\mathcal{I}}(p-a-2b,t)&\text{if }b \in \left[1,\frac{p-1}{2}\right]  \\
                  \overline{\mathcal{I}}(a+2b-p-1,t)&\text{if }b \in \left[\frac{p+1}{2},p-1\right] 
             \end{array}
             \right. .
        \end{array} 
    }
    $$
    which can be rewritten as given in the Corollary statement. Next, suppose that ${a \in \left[2,\frac{p-1}{2}\right]}$. Then we can write
    $$
    {
        \begin{array}{ll}
             c_{a,b}(t)=&\ \ 
             \left\{
             \begin{array}{ll}
                  \mathcal{I}(a-1,2b,t)&\text{if }b \in \left[1,\frac{p-a}{2}\right]  \\
                  \mathcal{I}(a-1,2(p-a-b)+1,t)&\text{if }b \in \left[\frac{p-a+1}{2},p-a\right] \\
                  \mathcal{I}(2(p-b)-a-1,2(a+b-p)+1,t)&\text{if }b \in \left[p-a,p-\frac{a+1}{2}\right]  \\
                  \mathcal{I}(a+2(b-p),2(p-b),t)&\text{if }b \in \left[p-\frac{a}{2},p-1\right]
             \end{array}
             \right.
             \\
             &\\
             &+\left\{
             \begin{array}{ll}
                  \overline{\mathcal{I}}(p-a-2b,t)&\text{if }b \in \left[1,\frac{p-a}{2}\right]  \\
                  \overline{\mathcal{I}}(a+2b-p-1,t)&\text{if }b \in \left[\frac{p-a+1}{2},p-a\right] \\
                  \overline{\mathcal{I}}(p-1-a,t)&\text{if }b \in \left[p-a,p-\frac{a+1}{2}\right]  \\
                  \overline{\mathcal{I}}(p-1-a,t)&\text{if }b \in \left[p-\frac{a}{2},p-1\right]
             \end{array}
             \right. .
        \end{array} 
    }
    $$
    which can be rewritten as given in the Corollary statement. Finally, suppose ${a \in \left[\frac{p+1}{2},p-2\right]}$. Then,
    $$
    {
        \begin{array}{ll}
             c_{a,b}(t)=&\ \ 
             \left\{
             \begin{array}{ll}
                  \mathcal{I}(p-a-2b,2b,t)&\text{if }b \in \left[1,\frac{p-a}{2}\right]  \\
                  \mathcal{I}(a+2b-p-1,2(p-a-b)+1,t)&\text{if }b \in \left[\frac{p-a+1}{2},p-a\right] \\
                  \mathcal{I}(p-1-a,2(a+b-p)+1,t)&\text{if }b \in \left[p-a,p-\frac{a+1}{2}\right]  \\
                  \mathcal{I}(p-1-a,2(p-b),t)&\text{if }b \in \left[p-\frac{a}{2},p-1\right]
             \end{array}
             \right.
             \\
             &\\
             &+\left\{
             \begin{array}{ll}
                  \overline{\mathcal{I}}(a-1,t)&\text{if }b \in \left[1,\frac{p-a}{2}\right]  \\
                  \overline{\mathcal{I}}(a-1,t)&\text{if }b \in \left[\frac{p-a+1}{2},p-a\right] \\
                  \overline{\mathcal{I}}(2(p-b)-a-1,t)&\text{if }b \in \left[p-a,p-\frac{a+1}{2}\right]  \\
                  \overline{\mathcal{I}}(2(b-p)+a,t)&\text{if }b \in \left[p-\frac{a}{2},p-1\right]
             \end{array}
             \right. .
        \end{array} 
    }
    $$
    which can be rewritten as given in the Corollary statement.
\end{proof}

\section{Induction and restriction}
In this final section, we explore some applications of our results:
\begin{enumerate}
    \item We take an ${\mathbb{F}[G]}$ module $M$ such that (1) the decomposition of ${\Res^G_B(M)}$ as a direct sum of indecomposable ${\mathbb{F}[B]}$ modules is known and (2) the composition factors of $M$ as an ${\mathbb{F}[G]}$ module are known, and then derive the decomposition of $M$ into indecomposable ${\mathbb{F}[G]}$ modules (see Theorem \ref{lifting decomposition}).
    \item We take an ${\mathbb{F}[B]}$ module $N$ with a known decomposition as a direct sum of indecomposable ${\mathbb{F}[B]}$ modules, and compute the decomposition of ${\Ind^G_B(N)}$ as a direct sum of indecomposable ${\mathbb{F}[G]}$ modules (see Corollary \ref{ind^G_B(U_{a,b})}). Similarly, we take an ${\mathbb{F}[G]}$ module $M$ with known decomposition as a direct sum of indecomposable ${\mathbb{F}[G]}$ modules, and compute the decomposition of ${\Res^G_B(M)}$ as a direct sum of indecomposable ${\mathbb{F}[B]}$ modules (see Lemma \ref{Res^G_B(P_{V_t})} and Theorem \ref{restriction of non-projective indecomposable F[G] module}).
\end{enumerate}
Note that the decomposition of a projective module is uniquely determined by (and can be computed from) its composition factors. This can be done by inverting the Cartan matrix for the block the projective module belongs to. This is because the columns of the Cartan matrix correspondence to the projective indecomposable modules, and the rows correspond to their composition factors. We now work this out in detail: first for $G$, and then for $B$.\\
\\
By using the Brauer tree (\ref{fig:blocks-of-G}), we see that the Cartan matrix of ${\mathcal{B}_0}$, which I will denote by ${\mathscr{B}}$, is the ${(p-1)/2\times (p-1)/2}$ matrix with the following description when ${p\geq 5}$:
\begin{itemize}
    \item The first column is given by ${(2\ 1\ 0\ ...\ 0)^T}$.
    \item The last column is given by ${(0\ 0\ ...\ 1\ 3)^T}$.
    \item For ${1 < i < (p-1)/2}$, the ${i^{\thh}}$ column is given by:
    $$
    {
        (
            \underbrace{0\ 0\ ...\ 0}_{i-2\text{ zeroes}}
            \ 1\ 2\ 1\ 
            0\ ...\ 0
        )^T.
    }
    $$
\end{itemize}
Written out, it looks like:
\[
    \begin{blockarray}{cccccc}
    P_{V_1} & P_{V_{p-2}} & P_{V_{3}} & ... & P_{V_{(p+\varepsilon)/2}} \\
    \begin{block}{(ccccc)c}
    2 & 1 & 0 & ... & 0 & V_1\\
    1 & 2 & 1 & ... & 0 & V_{p-2}\\
    0 & 1 & 2 & ... & 0 & V_3\\
    ... & ... & ... & ... & ... & ...\\
    0 & 0 & 0 & ... & 3 & V_{(p+\varepsilon)/2}\\
    \end{block}
    \end{blockarray}
\]
If ${p = 3}$, the Cartan matrix is simply given by ${(3)}$. Since the Brauer tree for ${\mathcal{B}_0}$ and ${\mathcal{B}_1}$ is the same up to relabelling, ${\mathcal{B}_1}$ will have the exact same Cartan matrix ${\mathscr{B}}$ with the rows and columns simply relabelled accordingly. For ${1\leq i,j\leq (p-1)/2}$, define
\begin{equation}
    \label{gamma_i,j}
    \Gamma_{i,j} =
    \left\{
        \begin{array}{ll}
             (-1)^{i+j}\left(i - \frac{2ij}{p}\right)&\text{if }i\leq j  \\
             (-1)^{i+j}\left(j - \frac{2ij}{p}\right)&\text{if }i > j 
        \end{array}
    \right. .
\end{equation}
\begin{lem}
    \label{inverse of the cartan matrix}
    The inverse of the Cartan matrix ${\mathscr{B}}$ for ${\mathcal{B}_0}$ (and hence also for ${\mathcal{B}_1}$) is the ${(p-1)/2 \times (p-1)/2}$ matrix ${\Gamma}$ with the entries as given in equation (\ref{gamma_i,j}).
\end{lem}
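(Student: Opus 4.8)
The plan is to verify directly that $\mathscr{B}\Gamma = \mathrm{Id}$ (it suffices to check one side since both matrices are square), computing the entries of the product and showing they equal $\delta_{ik}$. Since $\mathscr{B}$ is tridiagonal plus the single perturbed corner entry, the product $(\mathscr{B}\Gamma)_{ik} = \sum_j \mathscr{B}_{ij}\Gamma_{jk}$ collapses to a sum over at most three values of $j$, namely $j \in \{i-1, i, i+1\}$, with boundary adjustments at $i = 1$ (where the column is $(2,1,0,\dots)^T$, so $\mathscr{B}_{11} = 2$, $\mathscr{B}_{21} = 1$) and at $i = (p-1)/2 =: n$ (where the last column is $(0,\dots,0,1,3)^T$, so $\mathscr{B}_{n-1,n} = 1$, $\mathscr{B}_{nn} = 3$). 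Note $\mathscr{B}$ is symmetric, so $\mathscr{B}_{ij}$ for row $i$ reads: $\mathscr{B}_{i,i-1} = 1$, $\mathscr{B}_{i,i+1} = 1$, $\mathscr{B}_{i,i} = 2$ for $1 < i < n$; $\mathscr{B}_{1,1} = 2$; $\mathscr{B}_{n,n} = 3$.

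First I would handle the generic interior case $1 < i < n$: here $(\mathscr{B}\Gamma)_{ik} = \Gamma_{i-1,k} + 2\Gamma_{i,k} + \Gamma_{i+1,k}$, and I would split into the subcases $k < i-1$, $k = i-1$, $k = i$, $k = i+1$, $k > i+1$ according to which branch of the definition (\ref{gamma_i,j}) applies to each term. In the ranges where all three indices $i-1, i, i+1$ lie on the same side of $k$, the sign $(-1)^{j+k}$ alternates, so the combination $\Gamma_{i-1,k} + 2\Gamma_{i,k} + \Gamma_{i+1,k}$ telescopes: writing $\Gamma_{j,k} = (-1)^{j+k}(\min(j,k) - 2jk/p)$, the $\min$ term contributes $(-1)^{i+k}\big(-(i-1) + 2(\min\text{ at }i) - (i+1)\big)$-type cancellation and the $2jk/p$ term contributes $(-1)^{i+k}\frac{2k}{p}(-(i-1) + 2i - (i+1)) = 0$; one checks the leftover is $0$ when $k \neq i$ and $1$ when $k = i$. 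The transitional subcases $k = i-1$ and $k = i+1$, where one of the three $\Gamma$-terms switches branch, need a short separate computation but come out to $0$.

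Then I would treat the two boundary rows. For $i = 1$: $(\mathscr{B}\Gamma)_{1k} = 2\Gamma_{1,k} + \Gamma_{2,k}$, and since $1 \leq 2$ always, $\Gamma_{1,k} = (-1)^{1+k}(1 - 2k/p)$ for all $k$, while $\Gamma_{2,k}$ uses $j=2$; checking $k = 1$ versus $k \geq 2$ gives $1$ and $0$ respectively. For $i = n$: $(\mathscr{B}\Gamma)_{nk} = \Gamma_{n-1,k} + 3\Gamma_{n,k}$, and now for $k \leq n-1$ one has $\Gamma_{n,k} = (-1)^{n+k}(k - 2nk/p)$, $\Gamma_{n-1,k} = (-1)^{n-1+k}(k - 2(n-1)k/p) = -(-1)^{n+k}(k - 2(n-1)k/p)$, so the sum is $(-1)^{n+k}\big(3k - 6nk/p - k + 2(n-1)k/p\big) = (-1)^{n+k}k\big(2 - (4n+2)/p\big)$; since $2n + 1 = p$ we get $4n + 2 = 2p$, hence the bracket is $2 - 2 = 0$, as needed; the case $k = n$ gives $(-1)^{2n}(n - 2n^2/p)(3) + \Gamma_{n-1,n}$, and using $p = 2n+1$ one verifies this equals $1$. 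The main obstacle is purely bookkeeping: keeping the branch conditions in (\ref{gamma_i,j}), the sign factors $(-1)^{i+j}$, and the boundary index adjustments all straight simultaneously — there is no conceptual difficulty, but the transitional subcases $k \in \{i-1, i, i+1\}$ must each be checked by hand rather than by the telescoping shortcut. (Alternatively, one could instead verify $\mathscr{B}$ is the inverse by recognizing $\Gamma$ as a known Green's-function-type inverse of a tridiagonal-with-corner matrix, but the direct check above is self-contained and short enough.)
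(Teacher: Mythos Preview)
Your proposal is correct and follows essentially the same approach as the paper: direct verification that $\mathscr{B}\Gamma = I$ by computing the entries of the product, handling the interior rows $1<i<n$ and the two boundary rows $i=1$, $i=n=(p-1)/2$ separately. The only organizational difference is that the paper checks just the diagonal and the entries with $j>i$ and then appeals to the symmetry of $\mathscr{B}$ and $\Gamma$ to finish, whereas you plan to run through all the subcases for $k$ directly; the underlying computations are identical.
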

\begin{proof}
    If ${p=3}$, it's easy to verify ${\Gamma = (1/3) = (3)^{-1} = \mathscr{B}^{-1}}$. Henceforth assume ${p\geq 5}$. We will first show that ${(\mathscr{B}\Gamma)_{ii} = 1}$. For the entry in the top-left,
    $$
    {
        (\mathscr{B}\Gamma)_{1,1} =
        2\Gamma_{1,1} + \Gamma_{2,1}
        =
        2\left(1 - \frac{2}{p}\right) -
        \left(1 - \frac{4}{p}\right) = 1.
    }
    $$
    For the entry in the bottom-right,
    \begin{align*}
        (\mathscr{B}\Gamma)_{(p-1)/2,(p-1)/2} =&
        \Gamma_{(p-3)/2,(p-1)/2} + 3\Gamma_{(p-1)/2,(p-1)/2}\\
        =&(-1)^{p-2}\left(\frac{p-3}{2} - \frac{(p-3)(p-1)}{2p}\right)
        -3(-1)^{p-2}\left(\frac{p-1}{2} - \frac{(p-1)^2}{2p}\right)\\
        =&1.
    \end{align*}
    Finally, assume ${2\leq i\leq (p-3)/2}$. Then,
    \begin{align*}
        (\mathscr{B}\Gamma)_{ii} =&
        \Gamma_{i-1,i} + 2\Gamma_{i,i} + \Gamma_{i+1,i}\\
        =&-\left(i-1 - \frac{2i(i-1)}{p}\right)
        + 2\left(i - \frac{2i^2}{p}\right)
        - \left(i - \frac{2i(i+1)}{p}\right)\\
        =&1.
    \end{align*}
    Now we show that ${(\mathscr{B}\Gamma)_{ij} = 0}$ for ${j > i}$. Well, for ${i=1}$ and ${j > i=1}$,
    $$
    {
        (\mathscr{B}\Gamma)_{ij} =
        2\Gamma_{1,j} + \Gamma_{2,j}
        =
        2(-1)^{j+1}\left(1 - \frac{2j}{p}\right)
        - (-1)^{j+1}\left(2 - \frac{4j}{p}\right) = 0.
    }
    $$
    Now, for ${2\leq i\leq (p-3)/2}$ and ${j > i}$, we have
    \begin{align*}
        (\mathscr{B}\Gamma)_{ij} =&
        \Gamma_{i-1,j} + 2\Gamma_{i,j} + \Gamma_{i+1,j}\\
        =&(-1)^{i+j-1}\left(i-1 - \frac{2(i-1)j}{p}\right)
        - 2(-1)^{i+j-1}\left(i - \frac{2ij}{p}\right)
        + (-1)^{i+j-1}\left(i+1 - \frac{2(i+1)j}{p}\right)\\
        =&0.
    \end{align*}
    Since both ${\mathscr{B},\Gamma}$ are symmetric matrices, so is the product ${\mathscr{B}\Gamma}$, allowing us to conclude ${\mathscr{B}\Gamma = I_{(p-1)/2}}$. Hence, ${\Gamma = \mathscr{B}^{-1}}$.
\end{proof}
\begin{prop}
    \label{decomposition of P}
    Let $P$ be a projective ${\mathbb{F}[G]}$ module, and for ${1\leq t\leq p}$ let ${\alpha_t \in \mathbb{N}}$ denote the multiplicity of ${V_t}$ as a composition factor of $P$. We then have the following decomposition of $P$:
    $$
    {
        P \cong \bigoplus_{i=1}^{p}P_{V_i}^{\oplus n_i},
    }
    $$
    where ${n_p = \alpha_p}$, and ${n_i}$ for ${i \in \left[1,p-1\right]}$ is given by
    \begin{equation*}
        n_i
        =
        \begin{dcases}
            \sum_{1\leq j\leq (p-1)/2\text{ odd}}\Gamma_{i,j}\alpha_j +
            \sum_{1\leq j\leq (p-1)/2\text{ even}}\Gamma_{i,j}\alpha_{p-j}&\text{if }i \in \left[1,\frac{p-1}{2}\right]\text{ odd}\\
            \sum_{1\leq j\leq (p-1)/2\text{ odd}}\Gamma_{p-i,j}\alpha_j +
            \sum_{1\leq j\leq (p-1)/2\text{ even}}\Gamma_{p-i,j}\alpha_{p-j}&\text{if }i \in \left[\frac{p+1}{2},p-1\right]\text{ odd}\\
            \sum_{1\leq j\leq (p-1)/2\text{ odd}}\Gamma_{i,j}\alpha_{p-j} +
            \sum_{1\leq j\leq (p-1)/2\text{ even}}\Gamma_{i,j}\alpha_{j}&\text{if }i \in \left[1,\frac{p-1}{2}\right]\text{ even}\\
            \sum_{1\leq j\leq (p-1)/2\text{ odd}}\Gamma_{p-i,j}\alpha_{p-j} +
            \sum_{1\leq j\leq (p-1)/2\text{ even}}\Gamma_{p-i,j}\alpha_{j}&\text{if }i \in \left[\frac{p+1}{2},p-1\right]\text{ even}\\
        \end{dcases}
    \end{equation*}
\end{prop}
\begin{proof}
    Since ${V_p}$ belongs to a semi-simple block by itself, ${n_p = \alpha_p}$. Otherwise, let
    $$
    {
        \mathbf{\alpha_0} = \left(
            \begin{array}{lllll}
            \alpha_1&\alpha_{p-2}&\alpha_3&...&\alpha_{(p+\varepsilon)/2}
            \end{array}
        \right)^T,\ 
        \mathbf{\alpha_1} = \left(
            \begin{array}{lllll}
            \alpha_{p-1}&\alpha_{2}&\alpha_{p-3}&...&\alpha_{(p-\varepsilon)/2}
            \end{array}
        \right)^T.
    }
    $$
    Then by Lemma \ref{inverse of the cartan matrix},
    $$
    {
        \Gamma \mathbf{\alpha_0}
        =
        \left(
            \begin{array}{lllll}
            n_1&n_{p-2}&n_3&...&n_{(p+\varepsilon)/2}
            \end{array}
        \right)^T,\ 
        \Gamma \mathbf{\alpha_1}
        =
        \left(
            \begin{array}{lllll}
            n_{p-1}&n_{2}&n_{p-3}&...&n_{(p-\varepsilon)/2}
            \end{array}
        \right)^T,
    }
    $$
    which, after some case taking, gives the Theorem statement.
\end{proof}
\begin{thm}
    \label{lifting decomposition}
    Let $M$ be an ${\mathbb{F}[G]}$ module. For ${1\leq t\leq p}$ let ${\ell_t}$ denote the multiplicity of ${V_t}$ as a composition factor of ${M}$. Let ${\Res^G_B(M) \cong \bigoplus_{a=0}^{p-2}\bigoplus_{b=1}^{p}U_{a,b}^{\oplus n_{a,b}}}$ for some multiplicities ${n_{a,b} \in \mathbb{N}}$. Finally, let ${c_{a,b,t}}$ denote the multiplicity of ${V_t}$ as a composition factor of ${V_{a,b}}$ (which can be computed using Corollary \ref{c_{a,b}(t) explicit description and c_{a,b,t}}). Then:
    $$
    {
        M \cong \bigoplus_{a=0}^{p-2}\bigoplus_{b=1}^{p-1}V_{a,b}^{\oplus n_{a,b}} \oplus \bigoplus_{i=1}^{p}P_{V_i}^{\oplus n_i},
    }
    $$
    where ${n_i}$ is computed as in Proposition \ref{decomposition of P} with input
    $$
    {
        \alpha_t =
        \begin{dcases}
            \ell_t - \sum_{a=0}^{p-2}\sum_{b=1}^{p-1}n_{a,b}c_{a,b,t}&\text{if }t \in [1,p-1]\\
            \ell_p&\text{otherwise}
        \end{dcases}.
    }
    $$
\end{thm}
\begin{proof}
    The following proof uses the same methodology seen in \cite[\S 6]{bleher_justfordifferentials}, replacing Brauer characters with composition factors. Write ${M \cong W \oplus P}$, where $W$ is the largest non-projective ${\mathbb{F}[G]}$ summand of $M$ and $P$ is the largest projective ${\mathbb{F}[G]}$ summand of $M$. It follows immediately from the Green correspondence that
    $$
    {
        W \cong \bigoplus_{a=0}^{p-2}\bigoplus_{b=1}^{p-1}V_{a,b}^{\oplus n_{a,b}}.
    }
    $$
    To decompose $P$, observe that for ${1\leq t\leq p}$, the multiplicity ${\alpha_t}$ of ${V_t}$ as a composition factor of $P$ can be obtained by taking the multiplicity of ${V_t}$ in $M$ minus the multiplicity of ${V_t}$ in $W$. The latter is precisely as stated in the Corollary statement, and now Proposition \ref{decomposition of P} implies Theorem \ref{lifting decomposition}.
\end{proof}
\begin{lem}
    \label{composition factors of ind^G_B(U_{a,b})}
    Let ${0\leq a\leq p-2}$, let ${1\leq b\leq p}$ and let ${1\leq t\leq p}$. Denote by ${\theta_{a,b,c}}$ the multiplicity of ${S_c}$ as a composition factor of ${U_{a,b}}$, and denote by ${\ell_{a,b,t}}$ the multiplicity of ${V_t}$ as a composition factor of ${\Ind^G_B(U_{a,b})}$. Then we have
    \begin{align*}
        \theta_{a,b,c} =&
        \begin{dcases}
            0&\text{if }a\not\equiv c\Modwb{2}\\
            \floor*{\frac{2(b-1)+a-c}{p-1}} - \ceil*{\frac{a-c}{p-1}}+1&\text{otherwise}
        \end{dcases}
    \end{align*}
    and
    $$
    {
        \ell_{a,b,t}
        =
        \theta_{a,b,t-1} + \theta_{a,b,p-t}.
    }
    $$
\end{lem}
\begin{proof}
    Firstly, we have
    \begin{align*}
        \theta_{a,b,c} =&\ \#\left\{j \in \mathbb{N}: a + 2j\equiv c\Mod{p-1},\ 0\leq j\leq b-1\right\}&\text{by Proposition \ref{parameterisation of F[B] modules}}\\
        =&\ \#\left\{k \in \mathbb{Z}: \frac{c-a + k(p-1)}{2} \in \mathbb{N},\ 0\leq  \frac{c-a + k(p-1)}{2} \leq b-1\right\}\\
        =&
        \begin{dcases}
            0&\text{if }a\not\equiv c\Modwb{2}\\
            \text{Number of integers in }\left[\frac{a-c}{p-1},\frac{2(b-1)+a-c}{p-1}\right]&\text{otherwise}
        \end{dcases}\\
        =&\begin{dcases}
            0&\text{if }a\not\equiv c\Modwb{2}\\
            \floor*{\frac{2(b-1)+a-c}{p-1}} - \ceil*{\frac{a-c}{p-1}}+1&\text{otherwise}
        \end{dcases}.
    \end{align*}
    Next, passing to the Grothendieck group, we have
    \begin{align*}
        \left[
            \Ind^G_B(U_{a,b})
        \right]
        =
        &
        \sum_{c=0}^{p-2}\theta_{a,b,c}\left[\Ind^G_B(S_{c})\right]
        &\\
        =&\sum_{c=0}^{p-2}\theta_{a,b,c}\left(\left[V_{c+1}\right] + \left[V_{p-c}\right]\right)&\text{by Corollary \ref{Ind_B^G(S_a) composition factors}}
    \end{align*}
    from which the result follows.
\end{proof}
\begin{cor}
    \label{ind^G_B(U_{a,b})}
    Let ${0\leq a\leq p-2}$, ${1\leq b\leq p}$, and for ${1\leq i\leq p}$ define ${n_i}$ as in Proposition \ref{decomposition of P} using the input
    $$
    {
        \alpha_t
        =
        \begin{dcases}
            \ell_{a,b,t} - c_{a,b,t}&\text{if }b\leq p-1\\
            \ell_{a,b,t}&\text{if }b=p
        \end{dcases}
    }
    $$
    where ${\ell_{a,b,t}}$, ${c_{a,b,t}}$ are given as in Lemma \ref{composition factors of ind^G_B(U_{a,b})} and Corollary \ref{c_{a,b}(t) explicit description and c_{a,b,t}} respectively. Then we have:
    $$
    {
        \Ind^G_B(U_{a,b})
        \cong
        \left.
        \begin{dcases}
            V_{a,b}&\text{if }b \leq p-1\\
            0&\text{if }b=p
        \end{dcases}
        \right\}
        \oplus
        \bigoplus_{i=1}^{p} P_{V_i}^{\oplus n_i}.
    }
    $$
\end{cor}
\begin{proof}
    By similar reasoning used in Theorem \ref{lifting decomposition}, ${\alpha_t}$ gives the multiplicity of ${V_t}$ as a composition factor of the largest projective summand of ${\Ind^G_B(U_{a,b})}$. The result then follows from Proposition \ref{decomposition of P}.
\end{proof}
This finishes our computations for the induction from $B$ to $G$. We now turn to restriction from $G$ to $B$. To this end, as above, as prove the following analogue of Proposition \ref{decomposition of P} for $B$:
\begin{prop}
    \label{decomposition of Q}
    Let $Q$ be a projective ${\mathbb{F}[B]}$ module, and for ${0\leq c\leq p-2}$ let ${\kappa_c \in \mathbb{N}}$ denote the multiplicity of ${S_c}$ as a composition factor of $Q$. For ${0\leq r,s\leq (p-1)/2}$, let
    \begin{equation}
        \label{delta_(rs)}
        \delta_{rs} = \begin{dcases}
            (p-2)/p&\text{if }r=s\\
            -2/p&\text{otherwise}
        \end{dcases}.
    \end{equation}
    Then,
    $$
    {
        Q \cong \bigoplus_{c=0}^{p-2}U_{c,p}^{\oplus n_{c,p}},
    }
    $$
    where
    $$
    {
        n_{c,p}
        =
        \begin{dcases}
            0&\text{if }c \not\equiv i \Modwb{2}\\
            \sum_{j=1}^{(p-1)/2} \delta_{(c+2-i)/2,j}\cdot \kappa_{i+2(j-1)}&\text{otherwise}
        \end{dcases}.
    }
    $$
\end{prop}
\begin{proof}
    By using Proposition \ref{parameterisation of F[B] modules}, note that the Cartan matrix ${\bm{\gamma}}$ of ${\littleb_i}$ is the ${(p-1)/2 \times (p-1)/2}$ matrix given by
    \begin{equation}
        \label{gamma}
        \begin{blockarray}{cccccc}
        U_{i,p} & U_{i+2,p} & U_{i+4,p} & ... & U_{p-3+i,p} \\
        \begin{block}{(ccccc)c}
        3 & 2 & 2 & ... & 2 & S_i\\
        2 & 3 & 2 & ... & 2 & S_{i+2}\\
        2 & 2 & 3 & ... & 2 & S_{i+4}\\
        ... & ... & ... & ... & ... & ...\\
        2 & 2 & 2 & ... & 3 & S_{p-3+i}\\
        \end{block}
        \end{blockarray}.
    \end{equation}
    It is then not hard to show that the inverse ${\mathbf{\delta} = \mathbf{\gamma}^{-1}}$ of the above Cartan matrix is the ${(p-1)/2\times (p-1)/2}$ matrix with entries given as in (\ref{delta_(rs)}). Finally, to compute ${n_{c,p}}$, note that
    $$
    {
        \bm{\gamma}^{-1}
        \left(
            \begin{array}{lllll}
            \kappa_{i}&\kappa_{i+2}&\kappa_{i+4}&...&\kappa_{p-3+i}
            \end{array}
        \right)^T
        =
        \left(
            \begin{array}{lllll}
            n_{i,p}&n_{i+2,p}&n_{i+4,p}&...&n_{p-3+i,p}
            \end{array}
        \right)^T.
    }
    $$
\end{proof}
\begin{lem}
    \label{Res^G_B(P_{V_t})}
    Let ${1\leq t\leq p}$. Then,
    $$
    {
        \Res^G_B(P_{V_t})
        =
        \begin{dcases}
            U_{0,p}&\text{if }t \in \{1,p\}\\
            U_{t-1,p} \oplus U_{p-t,p}&\text{otherwise}
        \end{dcases}.
    }
    $$
\end{lem}
\begin{proof}
    We make use of the fact that, by Lemma \ref{composition factors of ind^G_B(U_{a,b})}, for ${0\leq a\leq p-2}$,
    \begin{equation*}
        \theta_{a,p,c}
        =
        \begin{dcases}
            0&\text{if }c\not\equiv a\Modwb{2}\\
            2&\text{if }c\equiv a\Modwb{2},\ c\neq a\\
            3&\text{if }c=a
        \end{dcases}.
    \end{equation*}
    From Proposition \ref{projective indecomposable F[G] modules}, we have
    $$
    {
        \left[P_{V_1}\right]
        =
        2\left[V_1\right] + \left[V_{p-2}\right].
    }
    $$
    Hence, using Lemma \ref{restriction of simple F[G] modules},
    $$
    {
        \left[\Res^G_B(P_{V_1})\right]
        =
        2\left[S_0\right] + \left[U_{2,p-2}\right].
    }
    $$
    Note that by Lemma \ref{composition factors of ind^G_B(U_{a,b})}, ${\theta_{2,p-2,0} = 1}$. Overall, we have that ${\Res^G_B(P_{V_1}})$ is a projective ${\mathbb{F}[B]}$ module of dimension $p$ with ${S_0}$ as a composition factor of multiplicity $3$. By Proposition \ref{parameterisation of F[B] modules} and (\ref{gamma}), we obtain ${\Res^G_B(P_{V_1}) = U_{0,p}}$. Next, note that by Proposition \ref{projective indecomposable F[G] modules} we have ${P_{V_p} = V_p}$ (since ${V_p}$ is projective), and hence by Lemma \ref{restriction of simple F[G] modules} we have ${\Res^G_B(P_{V_p}) = U_{0,p}}$. Finally, for ${1 < t < p}$, we have
    $$
    {
        [P_{V_t}] = 2[V_t] + [V_{p+1-t}] + [V_{p-1-t}].
    }
    $$
    Hence,
    $$
    {
        [\Res^G_B(P_{V_t})]
        =
        2[U_{p-t,t}] + [U_{t-1,p+1-t}] + [U_{t+1,p-1-t}].
    }
    $$
    Now we do some counting, using Lemma \ref{composition factors of ind^G_B(U_{a,b})}:
    \begin{align*}
        \theta_{p-t,t,t-1}
        =
        \begin{dcases}
            1&\text{if }t \in \left[2,\frac{p-1}{2}\right]\\
            2&\text{if }t \in \left[\frac{p+1}{2},p-1\right]
        \end{dcases},\quad
        \theta_{t-1,p+1-t,t-1}
        =
        \begin{dcases}
            2&\text{if }t \in \left[2,\frac{p+1}{2}\right]\\
            1&\text{otherwise}
        \end{dcases},\\
        \theta_{t+1,p-1-t,t-1}
        =
        \begin{dcases}
            1&\text{if }t \in \left[2,\frac{p-1}{2}\right]\\
            0&\text{otherwise}
        \end{dcases}.
        \qquad \qquad \qquad \qquad
    \end{align*}
    
    Hence, the multiplicity of ${S_{t-1}}$ as a composition factor of ${\Res^G_B(P_{V_t})}$ is
    $$
    {
        \theta_{p-t,t,t-1} + \theta_{t-1,p+1-t,t-1} + \theta_{t+1,p-1-t,t-1}=
        \begin{dcases}
            5&\text{if }t \in \left[2,\frac{p-1}{2}\right]\\
            6&\text{if }t = \frac{p+1}{2}\\
            5&\text{if }t \in \left[\frac{p+3}{2},p-1\right]
        \end{dcases}.
    }
    $$
    Therefore if ${t = (p+1)/2}$, then ${\Res^G_B(P_{V_t})}$ is a projective ${\mathbb{F}[B]}$ module of dimension ${2p}$ with ${S_{t-1}}$ as a composition factor of multiplicity ${6}$. By Proposition \ref{parameterisation of F[B] modules} and (\ref{gamma}), we obtain ${\Res^G_B(P_{V_t}) = U_{t-1,p}^{\oplus 2}}$ (which matches the Lemma statement since ${t-1 = p-t}$). If ${t\neq (p+1)/2}$, by using similar reasoning, this simply tells us that ${U_{t-1,p}}$ must be one summand of ${\Res^G_B(P_{V_t})}$. We continue counting, using Lemma \ref{composition factors of ind^G_B(U_{a,b})}:
    \begin{align*}
        \theta_{p-t,t,p-t}
        =
        \begin{dcases}
            1&\text{if }t \in \left[2,\frac{p-1}{2}\right]\\
            2&\text{if }t \in \left[\frac{p+1}{2},p-1\right]
        \end{dcases},\quad
        \theta_{t-1,p+1-t,p-t}
        =
        \begin{dcases}
            2&\text{if }t \in \left[2,\frac{p+1}{2}\right]\\
            1&\text{if }t \in \left[\frac{p+3}{2},p-1\right]
        \end{dcases},\\
        \theta_{t+1,p-1-t,p-t}
        =
        \begin{dcases}
            1&\text{if }t \in \left[2,\frac{p-1}{2}\right]\\
            0&\text{otherwise}
        \end{dcases}. \qquad \qquad \qquad \qquad
    \end{align*}
    Hence, the multiplicity of ${S_{p-t}}$ as a composition factor of ${\Res^G_B(P_{V_t})}$ is
    $$
    {
        \theta_{p-t,t,p-t} + \theta_{t-1,p+1-t,p-t} + \theta_{t+1,p-1-t,p-t}=
        \begin{dcases}
            5&\text{if }t \in \left[2,\frac{p-1}{2}\right]\\
            6&\text{if }t = \frac{p+1}{2}\\
            5&\text{if }t \in \left[\frac{p+3}{2},p-1\right]
        \end{dcases}.
    }
    $$
    If ${t = (p+1)/2}$, as expected we get ${\Res^G_B(P_{V_t}) = U_{p-t,p}^{\oplus 2} = U_{t-1,p}^{\oplus 2}}$. Otherwise, we get that ${U_{p-t,p}}$ also occurs as a summand of ${\Res^G_B(V_t)}$, and we are done.
\end{proof}
For a non-projective indecomposable ${\mathbb{F}[G]}$ module ${M(i,l,s,\epsilon)}$ as defined in Definition \ref{parameterisation of the non-projective indecomposable F[G] modules}, and for ${1\leq t\leq p}$ let ${\Theta_{i,l,s,t}}$ denote how many times ${V_t}$ occurs as a composition factor of ${M(i,l,s,\epsilon)}$. As explained in the proof of Corollary \ref{c_{a,b}(t) explicit description and c_{a,b,t}}, we know this is given by
$$
{
    \Theta_{i,l,s,t}
    =
    \begin{dcases}
        0&\text{if }t\equiv i\Modwb{2}\\
        \bm{1}_{[l+1,l+s]}(t) + \bm{1}_{[l+1,p-1]}(t)&\text{if }t\not\equiv i\Modwb{2}\text{ and }1 \leq t\leq p-1\\
        0&\text{if }t=p
    \end{dcases}.
}
$$
\begin{thm}
    \label{restriction of non-projective indecomposable F[G] module}
    Let ${M = M(i,l,s,\epsilon)}$ be a non-projective indecomposable ${\mathbb{F}[G]}$ module. Let ${a}$ be as given in Proposition \ref{green correspondent of M(i,l,s,epsilon)}, and for ${0\leq c\leq p-2}$ let ${n_{c,p}}$ be computed as in Proposition \ref{decomposition of Q} with the input
    $$
    {
        \kappa_c
        =
        \Theta_{i,l,s,1}\theta_{0,1,c}
        +
        \sum_{t=2}^{p-1}\Theta_{i,l,s,t}\theta_{p-t,t,c}
        -
        \theta_{a,L_i(l,s),c}.
    }
    $$
    Then,
    $$
    {
        \Res^G_B(M) \cong U_{a,L_i(l,s)} \oplus \bigoplus_{c=0}^{p-2}U_{c,p}^{\oplus n_{c,p}}.
    }
    $$
\end{thm}
\begin{proof}
    This proof is similar to Theorem \ref{lifting decomposition} and Corollary \ref{ind^G_B(U_{a,b})}. Write ${\Res^G_B(M) \cong N \oplus Q}$, where ${N,Q}$ are the largest non-projective ${\mathbb{F}[B]}$ summand, and largest projective ${\mathbb{F}[B]}$ summand of ${\Res^G_B(M)}$ respectively. It follows immediately from the Green correspondence that ${N\cong U_{a,L_i(l,s)}}$. Note that by Lemma \ref{restriction of simple F[G] modules}, for ${0\leq c\leq p-2}$,
    $$
    {
        \mu_c
        :=
        \Theta_{i,l,s,1}\theta_{0,1,c}
        +
        \sum_{t=2}^{p-1}\Theta_{i,l,s,t}\theta_{p-t,t,c}
    }
    $$
    gives the number of times ${S_c}$ occurs as a composition factor of ${\Res^G_B(M)}$. Hence, ${\kappa_c = \mu_c - \theta_{a,L_i(l,s),c}}$ gives the number of times ${S_c}$ occurs as a composition factor of ${Q}$. The decomposition of $Q$ as a direct sum of indecomposable ${\mathbb{F}[B]}$ modules then follows from Proposition \ref{decomposition of Q}, as stated.
\end{proof}

\section{Bibliography}
\printbibliography[heading=none]
\end{document}